\documentclass[reqno]{amsart}
\usepackage{latexsym,amsfonts,amssymb,amsmath,euscript}
\usepackage{amssymb,amsmath,amsthm,amsfonts,amscd}
\usepackage{physics}
\usepackage[dvipsnames]{xcolor}% tem mais cores que o acima.
\usepackage[colorlinks=true,citecolor=blue,linkcolor=blue]{hyperref}
\usepackage[round,authoryear]{natbib}

\usepackage{graphicx} % inserir imagens
\usepackage{color,fancybox}
\usepackage{graphicx} % inserir imagens
\usepackage{hyperref}         % para acrescentar links

\numberwithin{equation}{section}
\newtheorem{theorem}{Theorem}[section]
\newtheorem{lemma}[theorem]{Lemma}

\newtheorem{proposition}[theorem]{Proposition}
\newtheorem{remark}[theorem]{Remark}

\newtheorem{hypothesis}{Hypothesis}

\allowdisplaybreaks
\usepackage{esint}
\usepackage{enumerate}

\title[Parabolic--Elliptic Dynamics with Local--Nonlocal Coupled Operators]{Parabolic--Elliptic Dynamics with Local--Nonlocal Coupled Operators}

\begin{document}

\author[L.C. Rosa da Silva]
{Luiza C. Rosa da Silva}

\author[J. D. Rossi]
{Julio D. Rossi}

\address{Luiza Camile Rosa da Silva 
\hfill\break\indent Depto. de Matem\'atica, Instituto de Matem\'atica e Estat\'istica 
\hfill\break\indent Universidade de S\~ao Paulo, Rua do Mat\~ao 1010, S\~ao Paulo - SP - Brazil}
\email{luiza@ime.usp.br}

\address{Julio Daniel Rossi
\hfill\break\indent Depto. de Matematicas y Estadistica, Universidad Torcuatto di Tella
\hfill\break\indent Av. Pres. Figueroa Alcorta 7350, C1428, Buenos Aires - Argentina}
\email{julio.rossi@utdt.edu}

\date{}

\subjclass[2020]{35J91,34B15,35J75} 

\keywords{Elliptic/parabolic system, local--nonlocal equations, asymptotic behaviour} 

\begin{abstract}
In this paper, we study two local--nonlocal settings for parabolic--elliptic evolution systems. 
In our problems we have a disjoint partition of the spacial domain $\Omega$ as $\Omega=A\cup B$
and we first consider a local parabolic equation posed in  
$A$ with a nonlocal elliptic balance equation acting in the complementary subdomain $B$. 
Next, we reverse the roles and take a local elliptic equation posed in  
$A$ coupled with a nonlocal parabolic equation acting in $B$.
In both models, the interaction between the two regions is driven by a nonlocal transmission term given by a kernel that transfers mass across the interface, giving rise to a mixed local--nonlocal, elliptic--parabolic dynamics.
We consider Neumann boundary conditions for both problems.
To being our analysis we first establish the existence and uniqueness of solutions using a fixed
point argument. Then, we provide a detailed analysis of their qualitative behavior. In particular, we show that the coupling structure induces a natural energy functional whose gradient flow governs the evolution, despite the elliptic--parabolic nature of the system. As it is expected in Neumann settings, we prove that the total mass in the whole domain $\Omega$ is preserved in time.
We also analyze the long-time behaviour and obtain decay estimates for the parabolic component, which in turn drive the convergence of the elliptic part to a constant solution. Finally, we prove that the parabolic--elliptic problem under consideration is
the limit of a purely parabolic problem when a parameter that controls the speed of the dynamic at which one component evolves goes to zero.
\end{abstract}

\maketitle
\allowdisplaybreaks
\tableofcontents

\section{Introduction and statements of the main results}

Our main goal is to study two evolution problems that couples a local (nonlocal) parabolic equation in some part of the domain with a nonlocal (resp. local) elliptic equation in the remaining part. Both equations interact through a nonlocal transmission condition. 
The problems that we consider here do not allow mass transfer across the boundary of the domain. 
For this reason, both models can be interpreted as having a Neumann-type boundary
conditions; see \cite{julio, acosta} for further details. For an alternative formulation of Neumann conditions in the nonlocal setting, we refer the reader to \cite{toninho}. 
Other boundary conditions are possible (for example, we may prescribe Dirichlet boundary data, or even mixed boundary conditions),
but to keep the length of this paper reasonable we restrict ourselves to the Neumann case.

Nonlocal models can describe phenomena
not well represented by classical local Partial Differential Equations, PDE.
We refer to problems characterized by long-range interactions and discontinuities. 
For instance, in the context of diffusion, long-range interactions effectively
describe anomalous diffusion, while in the context of mechanics, material discontinuities
appear in crack formation. 
The fundamental difference between nonlocal models and classical local models  
is the fact that the latter involve differential operators, whereas the
former rely on integral operators. For general references on nonlocal models 
with applications to elasticity, population dynamics, image processing, we refer
to \citet{acosta,BatesChmaj99,BatesFifeRenWang97,BrandleChasseigneQuiros12,
CanizoMolino18,CarrilloFife05,ChasseigneChavesRossi06,
CortazarElguetaRossiWolanski07,CortazarElguetaRossiWolanski08,cortazar,
Coville,DiPaola,Fife03,Hutson,MenDu} and the book \citet{julio}.

It is often the case that nonlocal effects are concentrated only in
some parts of the domain, whereas, in the remaining parts, the system can be accurately
described by a PDE. The goal of coupling local and nonlocal models is to
combine a local equation (a PDE) with a nonlocal one (an integral equation),
under the assumption that the location of local and nonlocal effects can be identified
in advance. 
In this
context, one of the challenges of a coupling strategy is to provide a mathematically 
consistent formulation.
 
From a mathematical point of view, interesting properties arise from coupling local and nonlocal models, 
we refer to \cite{Peri1,Peri2,Peri3,monia,DEliaPeregoBochevLittlewood16,DuLiLuTian18,delia3,GalWarma17,Han,Kriventsov15,Sel,Sel2,Sel3}
the survey \cite{SUR} and references therein. 
Previous strategies treat the coupling condition as an optimization objective (the goal is to minimize the mismatch of the local and nonlocal solutions inside a common region). In \cite{Bere} the effects of network transportation on enhancing biological invasion are studied. The proposed mathematical model consists of one equation with nonlocal diffusion in a one-dimensional domain coupled via the boundary condition with a standard reaction-diffusion, in a two-dimensional domain. 
In \cite{DEliaPeregoBochevLittlewood16}, local and nonlocal problems were coupled through a prescribed region in which both kinds of equations overlap (the value of the solution in the nonlocal part of the domain is used as a Dirichlet boundary condition for the local part and vice-versa). This kind of coupling gives continuity of the solution in the overlapping region but does not preserve the total mass when Neumann boundary conditions are imposed.
In \cite{DEliaPeregoBochevLittlewood16} and \cite{DuLiLuTian18}, numerical schemes using local and nonlocal equations were developed and used to improve the computational
accuracy. In \cite{quiros} and \cite{monia} (see also \cite{Kriventsov15} for regularity results), 
evolution problems related to energies closely related to ours are studied.

Now, let us describe our coupled models. Let us fix a bounded smooth domain $\Omega\subset \mathbb{R}^N$ and consider a partition of the domain into two disjoint sets
\[ \Omega = A \cup B. \]
To simplify some of the arguments, we will also assume that $A$ and $B$ are connected (but this condition can be relaxed as in \cite{acosta}). 

\subsection{First local-nonlocal model}
In our first model, we consider a diffusion process acting on the whole domain $\Omega$, with particles diffusing much faster inside $B$ than inside $A$. This leads naturally to a coupled model in which the dynamics in $A$ are governed by a local parabolic equation (with the classical heat equation as main part), while in $B$ they evolve through a nonlocal elliptic operator. The interaction between the two regions is given by a nonlocal transmission term.

Let us describe precisely our system. We look for a pair of functions $(u,v)$,
$u:A\times [0,\infty) \mapsto \mathbb{R}$ and  $v:B\times [0,\infty) \mapsto \mathbb{R}$, that verify the following set of equations. In the first region, $A$, we consider a diffusion problem with homogeneous Neumann boundary conditions
\begin{equation}\label{localpar}
\begin{cases}
    \displaystyle u_t(x,t)=\Delta u(x,t)+ \int_B J(x-y)(v(y,t)-u(x,t))\,dy, &  (x,t)\in A\times (0,\infty),  \\
    \displaystyle \frac{\partial u}{\partial \eta}(x,t)=0, & (x,t)\in \partial A \times (0,\infty),\\
    u(x,0)=u_0(x), & x\in A,
\end{cases}
\end{equation}
where $\eta$ denotes the outward unit normal vector to $\partial A$. In $B$, we look at an elliptic nonlocal equation,
\begin{equation}\label{nleli}
   0 = \int_B G(x-y)(v(y,t)-v(x,t))\,dy
        +\int_A J(x-y)(u(y,t)-v(x,t))\,dy,
   \qquad (x,t)\in B\times (0,\infty).
\end{equation}
Observe that, in the model, the main part of the first equation \eqref{localpar} is simply the local heat equation $u_t=\Delta u$, while in \eqref{nleli} the dominant term corresponds to the elliptic version of a nonlocal diffusion operator, namely
\[
\int_B G(x-y)(v(y,t)-v(x,t))\,dy,
\]
see \cite{julio}. The terms
\[
\int_B J(x-y)(v(y,t)-u(x,t))\,dy
\quad\text{and}\quad
\int_A J(x-y)(u(y,t)-v(x,t))\,dy
\]
provide the coupling between both regions and act as nonlocal transmission conditions. 
For previous references on coupled local/nonlocal equations, we refer the reader to \cite{acosta,monia,
DEliaPeregoBochevLittlewood16,bruna,DuLiLuTian18,GalWarma17,quiros,Kriventsov15}.

This model can also be interpreted from the point of view of a particle system. As mentioned above, we decompose the domain $\Omega$ into two subdomains $A$ and $B$. In $A$, particles move according to Brownian motion (leading to the local heat equation $u_t=\Delta u$) and reflect at the boundary of $A$ (giving the Neumann condition $\frac{\partial u}{\partial \eta}=0$). In contrast, in $B$ particles follow a pure jump process with jump probability given by $G(x-y)$, 
this produces a term of the form
\[
\int_B G(x-y)(v(y,t)-v(x,t))\,dy.
\]
We assume that particles in $B$ jump on a much faster time scale, this makes the equation in $B$ to be elliptic.  
In addition to these internal mechanisms inside $A$ and $B$, particles may jump across the interface with probability density given by $J(x-y)$, which generates the coupling terms in both equations.

It is worth noting that we do not impose continuity of the densities across the interface between $A$ and $B$. However, the densities will remain continuous inside each subdomain due to the regularity of the kernels when we assume the continuity of the initial data. 

Each of the two kernels, $J,\,G$, is a radial probability density, precisely, we assume the following conditions on the kernels. 

\begin{hypothesis}\label{hypo}
The kernel $K = J,\,G \in C(\mathbb{R}^N,\mathbb{R})$ is a nonnegative, continuous and radial function such that 
$K(0) > 0$, $dist(A,B) < \operatorname{supp} K$\footnote{We denote $dist(A,B):=\inf\{|x-y|:\,x\in A,\ y\in B\}$ and 
$\operatorname{supp}K:=\overline{\{z\in\mathbb{R}^N:\,K(z)\neq0\}}$. 
The condition means that there exist $x\in A$ and $y\in B$ such that $x-y\in \operatorname{supp}K$.}, and
\[
\int_{\mathbb{R}^N} K(z)\,dz = 1.
\]
\end{hypothesis}

An important remark is that, since $dist(A,B) <  \operatorname{supp} J$, points inside $A$ interact
with points inside $B$, through the kernel $J$. 
This prevents a degeneracy that would occur if $A$ and $B$ were placed too far apart. 
Indeed, if the distance between $A$ and $B$ were larger than the radius of $\operatorname{supp} J$, then the convolution terms connecting the two regions would vanish, making the system trivial. 
This condition rules out this situation and guaranties that nonlocal interactions remain effective throughout the domain.

A natural approach to understanding the dynamics of the coupled problem \eqref{localpar}--\eqref{nleli} is to examine the energy structure of the system. This leads naturally to the functional $E_v : H^1(A) \to \mathbb{R}$ defined by
\begin{align}\label{Eintro}
E_v(u)
= \frac12\int_A |\nabla u(x)|^2\,dx
  + \frac12\int_A\int_B J(x-y)\,(u(x))^2\,dy\,dx
  - \int_A\int_B J(x-y)\,u(x)v(y)\,dy\,dx,\end{align}
where \(v \in L^2(B)\) is the minimizer of the functional \(F : L^2(B)\to \mathbb{R}\),
\begin{align}\label{Fintro}
F(v)
= \frac14 \iint_{B\times B} G(x-y)\,(v(y)-v(x))^2\,dx\,dy
  + \frac12\int_A\int_B J(x-y)\,(v(y)-u(x))^2\,dy\,dx,
\end{align}
We adopt the convention that $E_v(u)=\infty$ whenever $u \notin H^1(A)$. 
It is straightforward to verify that these energy functionals are proper, convex, lower semicontinuous
and coercive by Lemma \ref{lemma:coerc_v}. Equation \eqref{localpar} is the \(L^2(A)\)-gradient flow of \(E_v\), while equation \eqref{nleli} is the Euler--Lagrange equation associated with the minimization of \(F\). 

With the help of this energy functional we can prove existence and uniqueness of solutions. 

\begin{theorem} \label{teo.existencia.uni.intro}
Given $u_0\in L^2(A)$, there exists a unique fixed point solution $u_{fp}$ to the Neumann problem \eqref{localpar}--\eqref{nleli}. A comparison principle holds for this problem, if $u_0\ge \widetilde{u}_0$ then the corresponding solutions are also ordered,
and, moreover, the total mass is conserved along the evolution.
\end{theorem}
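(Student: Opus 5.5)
The plan is to reduce the coupled system to a single evolution equation for $u$ by solving the elliptic part explicitly, and then to run a Banach fixed point argument in $C([0,T];L^2(A))$.

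\textbf{Step 1: the elliptic equation in $B$.} For fixed $t$ and given $u(\cdot,t)\in L^2(A)$, write $a(x)=\int_A J(x-y)\,dy$ for $x\in B$. Equation \eqref{nleli} reads
\[
\int_B G(x-y)\bigl(v(y)-v(x)\bigr)\,dy - a(x)v(x) = -\int_A J(x-y)u(y)\,dy .
\]
This is the Euler--Lagrange equation of $F$ in \eqref{Fintro}. By Hypothesis \ref{hypo} and the connectedness of $A$ and $B$, the quadratic part of $F$ is coercive on $L^2(B)$ (Lemma \ref{lemma:coerc_v}). Hence $F$ has a unique minimizer $v=\mathcal{T}(u)$. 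The map $\mathcal{T}:L^2(A)\to L^2(B)$ is linear and bounded, $\|\mathcal{T}u\|_{L^2(B)}\le C\|u\|_{L^2(A)}$. The same operator is order preserving. To see this, note that the operator $v\mapsto av-\int_B G(\cdot-y)(v(y)-v)\,dy$ satisfies a maximum principle: test the equation against $v^-$, use the coercivity, and conclude that $u\ge0$ implies $v\ge0$.

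\textbf{Step 2: fixed point.} Substitute $v=\mathcal{T}(u)$ into \eqref{localpar}. This gives the problem
\[
u_t=\Delta u - b(x)u + \int_B J(x-y)\,\mathcal{T}(u)(y,t)\,dy, \qquad b(x)=\int_B J(x-y)\,dy,
\]
with Neumann conditions on $\partial A$. For $w\in C([0,T];L^2(A))$, let $\Phi(w)$ be the solution of
\[
u_t=\Delta u - bu + f_w, \qquad f_w=\int_B J(\cdot-y)\,\mathcal{T}(w)(y)\,dy .
\]
This is a linear Neumann heat problem with a bounded potential and an $L^2$ source, so $\Phi(w)$ exists by standard semigroup theory. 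Equivalently, it is the gradient flow of the convex lsc functional $E_v$ with $v$ frozen, via Brezis' theory. The difference estimate is
\[
\|\Phi(w_1)-\Phi(w_2)\|_{C([0,T];L^2)}\le TC\,\|w_1-w_2\|_{C([0,T];L^2)} .
\]
It follows from the energy estimate for the difference: the Laplacian term is nonpositive and $\|J\|_\infty|B|$ controls the source. So $\Phi$ is a contraction for small $T$. Because the equation is linear, $T$ does not depend on the data, and iterating gives a global unique solution $u_{fp}$ together with $v=\mathcal{T}(u_{fp})$.

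\textbf{Step 3: comparison and mass conservation.} By linearity it suffices to show that $u_0\ge0$ implies $u\ge0$. I would obtain this from the Picard iteration: if $w\ge0$, then $\mathcal{T}(w)\ge0$ by Step 1, so $f_w\ge0$. The parabolic maximum principle for $u_t=\Delta u-bu+f_w$ with Neumann data then gives $\Phi(w)\ge0$. Starting the iteration from $w\equiv0$, all iterates are nonnegative, and so is the limit.

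For mass conservation, formally differentiate $\int_A u+\int_B v$. The Laplacian term integrates to zero by the Neumann condition. Integrating \eqref{nleli} over $B$ kills the symmetric $G$-term and gives
\[
\int_B\int_A J(x-y)\bigl(u(y)-v(x)\bigr)\,dy\,dx=0 .
\]
By the symmetry of $J$, the exchange term in $\frac{d}{dt}\int_A u$ equals exactly this integral, so it vanishes too. Therefore $\frac{d}{dt}\int_A u=0$, and since $\int_B v$ is then determined, the total mass is constant. One can also see this directly, because integrating the elliptic equation shows $\int_B a v=\int_A b u$.

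\textbf{Main obstacle.} I expect the hardest part to be the justification in Step 1: coercivity of $F$ and the maximum principle for $\mathcal{T}$. Both require that $a>0$ on a set of positive measure in $B$, which comes from the support condition in Hypothesis \ref{hypo}, together with a nonlocal Poincaré-type inequality on $B$ built from $G$. I would rely on Lemma \ref{lemma:coerc_v} for this. A secondary technical point is making the time differentiation in the mass computation rigorous. For that I would use the mild formulation and approximate by regular data.
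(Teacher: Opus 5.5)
Your existence, uniqueness and comparison arguments are in substance the paper's. You use coercivity from Lemma~\ref{lemma:coerc_v} and Lax--Milgram for the bounded linear map $u\mapsto v$, then a contraction on $C([0,T];L^2(A))$ with $T$ independent of the data, then iteration in time. Positivity is propagated along the fixed-point iteration through positivity of the Neumann semigroup of $\Delta-b$. Absorbing $-bu$ into the generator, instead of the paper's inner fixed point for $T_1$, is only cosmetic. Your order-preservation proof for $u\mapsto v$ by testing with $v^-$ works because $\mathcal{B}(v^+,v^-)\le 0$, so coercivity forces $v^-=0$. It is sturdier than the paper's minimum-point argument, which needs continuity of $v$ and tacitly $a(x_0)>0$ at the minimum point.

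The problem is the last step of the mass argument. Your computation correctly gives $\frac{d}{dt}\int_A u=0$, and this is exactly what the paper proves in Proposition~\ref{massconervationp-e}. (Up to a sign: after swapping variables the exchange term is minus the integral coming from \eqref{nleli}, which is harmless since it vanishes.) But the inference ``since $\int_B v$ is then determined, the total mass is constant'' is false. $\int_B v(t)=\int_B\mathcal{T}(u(t))$ depends on the whole profile of $u(t)$, not just on $\int_A u(t)$. The identity $\int_B av=\int_A bu$ carries nonconstant weights, so it fixes neither $\int_B v$ nor $\int_A u+\int_B v$.

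Concretely, suppose part of $A$ is out of reach of $J$ from $B$. Take $u_0$ with $\int_A u_0=0$, positive on the part of $A$ within reach and negative elsewhere. Then $\int_A J(\cdot-y)u_0(y)\,dy\ge 0$ and $\not\equiv 0$ on $B$. Your $v^-$ argument only uses this, so it gives $v(0)=\mathcal{T}(u_0)\ge 0$, $\not\equiv 0$, hence $\int_B v(0)>0$. On the other hand $\int_A u(t)=0$, and the energy identity with Poincar\'e--Wirtinger on $A$ gives $u(t)\to 0$, hence $\int_B v(t)\to 0$.

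So the conserved quantity is $\int_A u$ alone. This is consistent with the $\varepsilon$-problem, where $\int_A u^\varepsilon+\varepsilon\int_B v^\varepsilon$ is conserved. State the conservation law in that form and drop the claim about $\int_A u+\int_B v$.
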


Once we have existence and uniqueness of solutions we turn our attention to 
the asymptotic behavior. 
It is well known that solutions of the Neumann local heat equation 
$u_t = \Delta u $
days exponentially in time to the mean value of the initial condition. 
Our model reproduces this behavior, at least when $A$ is connected. 
The result reads as follows:

\begin{theorem}\label{thm:expdecay.intro} Let $A$ be a smooth connected subdomain and
 $u_0\in L^2(A)$ with $\int_A u_0\,dx = 0$, and let $u$ be the 
solution to \eqref{localpar}.  
Then $u$ decays exponentially to zero as $t\to\infty$.  
Let
\begin{align}\label{introeigenvalue}
\begin{array}{l}
\displaystyle
\lambda_1  = 
\min_{\| u \|_{L^2 (A)=1,\int_A u (x) \, dx =0}} 
 \frac12\int_A |\nabla u(x)|^2\,dx
  + \frac14 \int_B\!\int_B G(x-y)(v(x)-v(y))^2\,dx\,dy 
  \\[10pt]
  \qquad \qquad \qquad \qquad \displaystyle + \frac12\int_A\int_B J(x-y)\,(v(y)-u(x))^2\,dy\,dx.
  \end{array}
\end{align}
Then, it holds that
$$\lambda_1  >0$$
and we have an exponential decay
\begin{equation}\label{abneumann.intro}
\|u(\cdot,t)\|_{L^2(A)}
\;\le\;
e^{- \lambda_1 t}\,\|u_0\|_{L^2(A)},
\qquad t\ge 0.
\end{equation}
\end{theorem}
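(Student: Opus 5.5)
The plan is an energy (Lyapunov) argument on $\|u(\cdot,t)\|_{L^2(A)}^2$, in three steps. First I show that the mean of $u$ over $A$ is conserved, not only the total mass. Second, I compute the dissipation and identify it with the quadratic form in \eqref{introeigenvalue}. Third, I prove $\lambda_1>0$ by compactness and close with Gronwall. Throughout, $(u,v)$ is the solution of Theorem \ref{teo.existencia.uni.intro}, with $v(\cdot,t)$ the minimizer of $F$ associated with $u(\cdot,t)$.

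\emph{Step 1: $\int_A u(x,t)\,dx$ is constant in time.} Integrate \eqref{localpar} over $A$. The Neumann condition kills the Laplacian term, so
$$\frac{d}{dt}\int_A u=\int_A\int_B J(x-y)(v(y)-u(x))\,dy\,dx .$$
Now integrate \eqref{nleli} over $B$. Since $G$ is radial, the $G$-term vanishes by antisymmetry, leaving $\int_B\int_A J(x-y)(u(y)-v(x))\,dy\,dx=0$. After relabeling the variables, this is exactly the right-hand side above. Hence $\int_A u(\cdot,t)=\int_A u_0=0$ for all $t\ge0$.

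\emph{Step 2: dissipation identity.} Multiply \eqref{localpar} by $u$ and integrate over $A$; integrating by parts with the Neumann condition gives
$$\tfrac12\tfrac{d}{dt}\|u\|^2_{L^2(A)}=-\int_A|\nabla u|^2+\int_A\int_B J(x-y)(v(y)-u(x))u(x)\,dy\,dx .$$
Multiply \eqref{nleli} by $v(x)$ and integrate over $B$. By the symmetry of $G$ this gives
$$0=-\tfrac12\iint_{B\times B}G(x-y)(v(y)-v(x))^2+\int_B\int_A J(x-y)(u(y)-v(x))v(x) .$$
Adding the two relations, the cross terms combine into $-\int_A\int_B J(x-y)(v(y)-u(x))^2$. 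Writing $Q(u,v)$ for the functional in \eqref{introeigenvalue}, this yields
$$\tfrac12\tfrac{d}{dt}\|u\|^2_{L^2(A)}=-2\,Q(u,v).$$
To justify these manipulations I rely on the gradient-flow regularity: $u(t)\in H^1(A)$ for $t>0$, $u_t\in L^2_{loc}((0,\infty);L^2(A))$, and $v(t)\in L^2(B)$. If needed, I would argue first for regular data and then pass to the limit using the $L^2$-contraction and comparison from Theorem \ref{teo.existencia.uni.intro}.

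\emph{Step 3: $\lambda_1>0$ and conclusion.} I read $\lambda_1$ as the infimum of $Q(u,v)$ over all $v\in L^2(B)$ and all $u\in H^1(A)$ with $\|u\|_{L^2(A)}=1$ and $\int_A u=0$; restricting $v$ to the minimizer of $F$ only raises this number. Suppose $\lambda_1=0$. Take a sequence $(u_n,v_n)$ with $Q(u_n,v_n)\to0$. Then $\|\nabla u_n\|_{L^2(A)}\to0$, so $(u_n)$ is bounded in $H^1(A)$. By Rellich on the smooth bounded domain $A$, a subsequence converges strongly in $L^2(A)$ to some $u$ with $\nabla u=0$. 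Since $A$ is connected, $u$ is constant; the zero-mean condition forces $u=0$, contradicting $\|u\|_{L^2(A)}=1$. The $v$-terms are nonnegative, so no control of $v_n$ is needed, and this is where connectedness of $A$ enters.

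By Step 1, $u(t)$ is admissible after normalization, so homogeneity gives $Q(u,v)\ge\lambda_1\|u\|^2_{L^2(A)}$. Step 2 then gives $\frac{d}{dt}\|u\|^2_{L^2(A)}\le-4\lambda_1\|u\|^2_{L^2(A)}$. Gronwall yields $\|u(t)\|_{L^2(A)}\le e^{-2\lambda_1 t}\|u_0\|_{L^2(A)}$, which is even stronger than \eqref{abneumann.intro}.

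The main obstacle I expect is Step 1. The paper only advertises conservation of the total mass over $\Omega$, but the zero-mean constraint in $\lambda_1$ is needed for $u$ alone in $A$. What makes this work is that the elliptic equation \eqref{nleli} forces the net $J$-flux between $A$ and $B$ to vanish at every time. The regularity justification in Step 2 is routine by comparison.
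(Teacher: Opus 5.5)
Your proposal is correct and follows essentially the paper's route: conservation of $\int_A u$ (the paper's Proposition~\ref{massconervationp-e}, proved by the same flux computation you give), the energy identity obtained by testing \eqref{localpar} with $u$ and \eqref{nleli} with $v$ and adding, positivity of $\lambda_1$ via Rellich compactness and connectedness of $A$ (Lemma~\ref{lemmaabneumann}), and Gronwall. Your bookkeeping is actually sharper than the paper's. Writing the dissipation as $-2Q(u,v)\le -2\lambda_1\|u\|_{L^2(A)}^2$ gives $\|u(\cdot,t)\|_{L^2(A)}\le e^{-2\lambda_1 t}\|u_0\|_{L^2(A)}$, whereas the paper's cruder bound by $-\frac{\lambda_1}{2}\int_A u^2\,dx$ only yields $\|u(\cdot,t)\|_{L^2(A)}^2\le e^{-\lambda_1 t}\|u_0\|_{L^2(A)}^2$, i.e.\ half the rate claimed in \eqref{abneumann.intro}.
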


Notice that, by linearity, this results implies that, for $u_0 \in L^2 (A)$, 
\[
\| u(\cdot,t) - \overline{u_0} \|_{L^2(\Omega)}
\le
 e^{-\lambda_1 t}\|u_0\|_{L^2(A)},
\qquad t > 0,
\]
where $\overline{u_0}$ denotes the mean value of $u_0$,
\[
\overline{u_0} := \frac{1}{|A|} \int_A u_0(x)\,dx.
\]

Another property of the system \eqref{localpar}-\eqref{nleli} is that it can be obtained as a limit of a parabolic system. 
In fact, we can look at the elliptic equation as the limit of a family of very fast parabolic equations. 
This idea is classical: we replace the stationary equation by a parabolic equation in which the time derivative is multiplied by a small positive parameter $\varepsilon>0$. The smaller $\varepsilon$ is, the faster the dynamics of $v^\varepsilon$ as it relaxes toward its equilibrium. More precisely, for fixed initial data $u_0\in L^2(A)$ and $v_0\in L^2(B)$, we introduce the $\varepsilon$-family $(u^\varepsilon,v^\varepsilon)$, solutions to the following $\varepsilon$--problem. Take, as before, $\Omega=A\cup B$, in $A$ we consider
\begin{equation}\label{elocalpar.intro}
\begin{cases}
u_t^\varepsilon(x,t)
   = \Delta u^\varepsilon(x,t)
     + \displaystyle\int_B 
         J(x-y)\big( v^\varepsilon(y,t) - u^\varepsilon(x,t) \big)\,dy,
   & x\in A,\ t\in(0,T],\\[1.1em]
\displaystyle \frac{\partial u^\varepsilon}{\partial \eta}(x,t)=0,
   & x\in\partial A,\ t\in[0,T],\\[0.7em]
u^\varepsilon(x,0)=u_0(x),
   & x\in A,
\end{cases}
\end{equation}
and in $B$,
\begin{equation}\label{enleli.intro}
\begin{cases}
\varepsilon\, v_t^\varepsilon(x,t)
   = \displaystyle\int_B 
        G(x-y)\big( v^\varepsilon(y,t) - v^\varepsilon(x,t) \big)\,dy
     + \displaystyle\int_A 
        J(x-y)\big( u^\varepsilon(y,t) - v^\varepsilon(x,t) \big)\,dy,
   & x\in B,\ t\in(0,T],\\[1.1em]
v^\varepsilon(x,0)=v_0(x),
   & x\in B.
\end{cases}
\end{equation}
Concerning the limit as $\varepsilon \to 0$ we have the following result.

\begin{theorem}\label{approximation.intro} If the initial conditions $u_0\in L^2(A)$ and $v_0\in L^2(B)$ are fixed. Then, for $T>0$, $u^\epsilon$ converges weakly to $u$ in $L^2 ((0,T),H^1(A))$ and $v^\epsilon$ converges weakly to $v (\cdot, t)$ in $L^\infty((0,T),L^2(B))$, 
with the limits $u$ and $v$ solving the system \eqref{localpar}-\eqref{nleli} with initial datum $u_0\in L^2(A)$.
\end{theorem}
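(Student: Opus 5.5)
Throughout, $C$ denotes a constant independent of $\varepsilon$. The plan is to derive uniform-in-$\varepsilon$ energy estimates for the $\varepsilon$--problem \eqref{elocalpar.intro}--\eqref{enleli.intro}, extract weakly convergent subsequences, and pass to the limit in the weak formulation. Since the problem is linear, weak convergence suffices. Uniqueness for the limit system (Theorem \ref{teo.existencia.uni.intro}) then upgrades subsequential convergence to convergence of the whole family.

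\textbf{Energy estimates.} Test \eqref{elocalpar.intro} with $u^\varepsilon$ and \eqref{enleli.intro} with $v^\varepsilon$, add, and use the symmetry of $J$ and $G$. This gives
\[
\frac12\frac{d}{dt}\Big(\|u^\varepsilon\|_{L^2(A)}^2+\varepsilon\|v^\varepsilon\|_{L^2(B)}^2\Big)
+\int_A|\nabla u^\varepsilon|^2
+\frac12\iint_{B\times B}G(x-y)(v^\varepsilon(y)-v^\varepsilon(x))^2
+\int_A\!\int_B J(x-y)(u^\varepsilon(x)-v^\varepsilon(y))^2=0 .
\]
Integrating on $(0,T)$ yields the following bounds, uniform in $\varepsilon\le1$:
\begin{itemize}
\item $u^\varepsilon$ is bounded in $L^\infty(0,T;L^2(A))$ and $\nabla u^\varepsilon$ in $L^2(A\times(0,T))$, so $u^\varepsilon$ is bounded in $L^2(0,T;H^1(A))$;
\item the two dissipation terms involving $v^\varepsilon$ are bounded in $L^1(0,T)$.
\end{itemize}

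\textbf{Main obstacle: a uniform bound on $v^\varepsilon$ in $L^2(B)$.} The energy identity only controls $\varepsilon\|v^\varepsilon\|^2$, which is not enough. I would use a coercivity argument in the spirit of Lemma \ref{lemma:coerc_v}. Let $m(x)=\int_A J(x-y)\,dy$. Rewrite \eqref{enleli.intro} as $\varepsilon v^\varepsilon_t + L v^\varepsilon = f^\varepsilon$, where
\[
L v(x) = -\int_B G(x-y)(v(y)-v(x))\,dy + m(x)\,v(x), \qquad f^\varepsilon(x)=\int_A J(x-y)u^\varepsilon(y)\,dy .
\]
The operator $L$ is positive definite on $L^2(B)$: its quadratic form vanishes only on constants with $m\equiv0$, which is excluded by Hypothesis \ref{hypo} together with the connectedness of $B$. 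Hence $\langle Lv,v\rangle\ge c\|v\|^2$ for some $c>0$. Testing with $v^\varepsilon$ gives
\[
\frac{\varepsilon}{2}\frac{d}{dt}\|v^\varepsilon\|^2 + c\|v^\varepsilon\|^2 \le \|f^\varepsilon\|\,\|v^\varepsilon\|,
\]
and $\|f^\varepsilon\|\le C\|u^\varepsilon\|_{L^2(A)}\le C$. Comparison for this ODE inequality gives
\[
\|v^\varepsilon(t)\|\le e^{-ct/\varepsilon}\|v_0\| + C/c \le C .
\]
So $v^\varepsilon$ is bounded in $L^\infty(0,T;L^2(B))$ regardless of $v_0$; the initial layer only decays.

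\textbf{Compactness and limit.} From the bounds, $u^\varepsilon\rightharpoonup u$ weakly in $L^2(0,T;H^1(A))$ and $v^\varepsilon\rightharpoonup^* v$ in $L^\infty(0,T;L^2(B))$. The equation for $u^\varepsilon$ also bounds $u^\varepsilon_t$ in $L^2(0,T;(H^1(A))')$, so Aubin--Lions gives strong $L^2$ convergence of $u^\varepsilon$ and identifies the initial datum $u(0)=u_0$. Strong convergence is not really needed, however: all terms in the equations are linear, and the convolution operators are bounded on $L^2$, so weak convergence passes through them. Accordingly:
\begin{itemize}
\item Take a test function $\varphi\in C^1([0,T];H^1(A))$ with $\varphi(T)=0$ in the weak form of \eqref{elocalpar.intro}, and let $\varepsilon\to0$. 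This gives the weak form of \eqref{localpar}, including the Neumann condition and the initial datum $u_0$.
\item Take a test function $\psi\in C^1_c((0,T);L^2(B))$ in \eqref{enleli.intro}. The term $\varepsilon\int\!\!\int v^\varepsilon\psi_t$ is $O(\varepsilon)$ and vanishes in the limit, so $v$ satisfies \eqref{nleli} for a.e.\ $t$.
\end{itemize}
Thus $(u,v)$ solves \eqref{localpar}--\eqref{nleli} with initial datum $u_0$. By uniqueness in Theorem \ref{teo.existencia.uni.intro}, the limit does not depend on the subsequence, so the whole family converges.
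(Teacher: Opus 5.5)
Your proof is correct. Its architecture matches the paper's: energy identity, weak compactness, and passage to the limit in the linear weak formulations. It differs, however, at the step that actually carries the argument, namely the $\varepsilon$-uniform bound on $v^\varepsilon$ in $L^2(B)$.

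\textbf{The bound on $v^\varepsilon$.} The paper gets this bound by citing \eqref{vunormp-e}, the Lipschitz estimate for the elliptic solution map $T_{AB}$. That estimate does not apply, because $v^\varepsilon$ solves your relaxed equation $\varepsilon v^\varepsilon_t=-Lv^\varepsilon+f^\varepsilon$ and is not $T_{AB}(u^\varepsilon)$. Your coercivity-plus-comparison argument is what really closes this step. The bound $\|v^\varepsilon(t)\|\le e^{-ct/\varepsilon}\|v_0\|+C/c$ also makes the initial layer, and hence the loss of $v_0$, explicit.

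\textbf{The energy estimate.} The paper passes through Lemma \ref{lemmaabneumann} here. That lemma concerns zero-mean $u$ paired with $T_{AB}u$, and the claimed inequality is false for $(u^\varepsilon,v^\varepsilon)$: take the constant stationary pair $u^\varepsilon\equiv v^\varepsilon\equiv 1$. Discarding the nonnegative dissipation, as you do, is all that is needed. It also gives the gradient bound without using the $v^\varepsilon$ bound.

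\textbf{The mode of convergence.} Your weak-$*$ convergence in $L^\infty(0,T;L^2(B))$ is the correct reading of the statement. Genuine weak convergence there fails whenever $v_0\neq T_{AB}(u_0)$. Indeed, a Banach limit of the averages of $\langle h(t),g\rangle$ over $(0,1/n)$ is a bounded functional on that space. It returns $\langle v_0,g\rangle$ on every $v^\varepsilon$ but $\langle T_{AB}(u_0),g\rangle$ on the limit $v$.

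\textbf{Two points to tighten.} First, a quadratic form that vanishes only at $0$ need not be bounded below by $c\|v\|^2$ in infinite dimensions. Cite Lemma \ref{lemma:coerc_v}, whose form is exactly $\langle Lv,v\rangle$. Alternatively, write $L$ as a multiplication operator bounded below by a positive constant minus a compact Hilbert--Schmidt operator, and argue by contradiction.

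Second, Theorem \ref{teo.existencia.uni.intro} gives uniqueness of the fixed-point (mild) solution, while your limit is only a weak solution. Close this as follows. The difference $w$ of two weak solutions with the same $u_0$ satisfies $w_t\in L^2(0,T;(H^1(A))')$. Hence the dissipation identity \eqref{energy-final} holds for $w$ with $z=T_{AB}w$, and it gives $\|w(t)\|_{L^2(A)}\le\|w(0)\|_{L^2(A)}=0$. The paper itself only asserts convergence along a subsequence, so this whole-family step is an addition on your part.
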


Notice that the initial datum for the fast component, $v_0$, is lost in this limit.

\subsection{Second local-nonlocal model} Now, we introduce the second problem that we study here. This one couples a nonlocal parabolic equation in $B$ with a local elliptic equation in $A$. We still assume that a diffusion process takes place throughout the whole domain $\Omega$, but in this setting the fast diffusion occurs in the 
nonlocal region $B$, while the local region $A$ exhibits a faster dynamic. Under these assumptions, we introduce the second model that now couples a local elliptic equation (driven by the classical Neumann Laplacian) in the region $A$ with a nonlocal parabolic equation (governed by a nonlocal diffusion operator) in the region $B$. The interaction between both regions is, as before, mediated by a nonlocal coupling mechanism.

In the region $A$, we consider the homogeneous Neumann local elliptic problem
\begin{equation}\label{elilocal}
\begin{cases}
\displaystyle 
0 = \Delta u(x,t)
  + \int_B J(x-y)\,\big(v(y,t)-u(x,t)\big)\,dy,
& (x,t)\in A \times (0,\infty),
\\[10pt]
\displaystyle \frac{\partial u}{\partial \eta}(x,t)=0,
& (x,t)\in A \times (0,\infty),
\end{cases}
\end{equation}
and in the region $B$, we consider the nonlocal parabolic problem
\begin{equation}\label{parnl}
\begin{cases}
\displaystyle 
v_t(x,t)
= \int_B G(x-y)\,\big(v(y,t)-v(x,t)\big)\,dy
  + \int_A J(x-y)\,\big(u(y,t)-v(x,t)\big)\,dy,
& (x,t)\in B \times (0,\infty),
\\[10pt]
v(x,0)=v_0(x), & x\in B.
\end{cases}
\end{equation}
These equations are naturally associated with the energy functional $F_v : L^2(A) \to \mathbb{R}$
\[\begin{array}{l}
\displaystyle F_u(v)
= \frac14\iint_{B\times B} G(x-y)\,(v(y)-v(x))^2\,dx\,dy
  + \frac12\iint_{A\times B}  J(x-y)\,v(x)^2\,dy\,dx
  \\[10pt]
  \qquad \qquad \qquad \qquad \qquad \qquad \qquad \qquad \qquad \qquad \qquad \displaystyle - \iint_{A\times B} J(x-y)\,u(x)v(y)\,dy\,dx,
  \end{array}\]
where \(u \in L^2(B)\) is the minimizer of the functional \(E : H^1(A)\to \mathbb{R}\),
\[E(u)= \frac12 \int_A |\nabla u (x)|^2dx + \frac12\int_A\int_B J(x-y)\,(v(y)-u(x))^2\,dy\,dx.\]

For this model we have the following results, which are the analogous to the previous results for the first model.

\begin{theorem} Given $v_0\in L^2(B)$ there exists an unique weak solution $(u,v)$ with $(u(\cdot, t), v(\cdot, t) ) \in H^1(A)\times L^2(B)$ to the problem \eqref{elilocal}--\eqref{parnl}, that is globally defined. The solution $(u,v)$ satisfies the comparison principle: if $v_0\ge \widetilde{v}_0$ then the corresponding solutions are also ordered. 
In addition, solutions preserve the total mass in $B$ along the evolution.
\end{theorem}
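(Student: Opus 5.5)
The plan is to eliminate the elliptic component and reduce the system to a Lipschitz ODE in $L^2(B)$, which Picard's theorem solves globally. First, fix $t$ and $v\in L^2(B)$. I would solve \eqref{elilocal} for $u$: it is the Euler--Lagrange equation of $E$ on $H^1(A)$. Set $a(x)=\int_B J(x-y)\,dy$. By Hypothesis \ref{hypo} and connectedness of $A$, $a\ge0$ on $A$ and $a\not\equiv0$. The bilinear form
\[
\int_A \nabla u\cdot\nabla w\,dx+\int_A a\,u\,w\,dx
\]
is then coercive on $H^1(A)$. This is a Poincaré-type inequality proved by contradiction and compactness: a sequence with $\|u_n\|_{H^1}=1$ and vanishing form converges to a constant $c$ with $c^2\int_A a=0$, so $c=0$, a contradiction. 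The right-hand side $f_v(x)=\int_B J(x-y)v(y)\,dy$ is in $L^2(A)$ with $\|f_v\|_{L^2(A)}\le C\|v\|_{L^2(B)}$. Lax--Milgram then gives a unique $u=S v\in H^1(A)$, which is the minimizer of $E$. The solution map $S:L^2(B)\to H^1(A)$ is linear and bounded. Moreover $S$ is order preserving, by the weak maximum principle: test with $(Sv)^-$ when $v\ge0$.

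Second, substitute $u=Sv$ into \eqref{parnl}. This gives the abstract equation $v_t=\mathcal{L}v$, where
\[
\mathcal{L}v(x)=\int_B G(x-y)(v(y)-v(x))\,dy+\int_A J(x-y)(Sv)(y)\,dy-b(x)v(x),\qquad b(x)=\int_A J(x-y)\,dy .
\]
Each term is a bounded linear operator on $L^2(B)$, since the kernels are bounded and integrable (Young's inequality) and $S$ is bounded. So $\mathcal{L}$ is bounded, and $v(t)=e^{t\mathcal{L}}v_0\in C^1([0,\infty);L^2(B))$ is the unique global solution. Setting $u(t)=Sv(t)$ yields the unique weak solution $(u,v)$ in $H^1(A)\times L^2(B)$. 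Uniqueness of the pair follows from uniqueness of $S$ and of the linear ODE. Equivalently, one can phrase this as a Banach fixed point for the map $v\mapsto$ (solution of \eqref{parnl} with given $u=Sv$) on $C([0,T];L^2(B))$ for small $T$, and iterate; $T$ is uniform because the Lipschitz constant is independent of the data.

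Third, comparison. By linearity it suffices to show $v_0\ge0\Rightarrow v\ge0$. Write $\mathcal{L}=\mathcal{P}-M$ with $Mv=(1+b)v$, a multiplication by a bounded function, and
\[
\mathcal{P}v=\int_B G(\cdot-y)v(y)\,dy+\int_A J(\cdot-y)(Sv)(y)\,dy .
\]
Here $\mathcal{P}$ is positive, because $S$ is positive and $\int_B G=$ constant absorbed. Then
\[
e^{t\mathcal{L}}=e^{-tM}\text{-twisted Dyson series of positive operators},
\]
or more concretely $v$ is the limit of the Picard iteration
\[
v^{k+1}(t)=e^{-tM}v_0+\int_0^t e^{-(t-s)M}\mathcal{P}v^k(s)\,ds ,
\]
which preserves nonnegativity. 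Hence $v\ge0$, and then $u=Sv\ge0$. This is the step I regard as the main obstacle. The point to check carefully is the positivity of $S$ in the Neumann setting with the weight $a$, since the elliptic problem has no zeroth-order term where $a=0$. The test-function argument gives $\int|\nabla u^-|^2+\int a(u^-)^2\le0$, so $u^-$ is constant and that constant vanishes, by the same argument as the coercivity.

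Finally, mass. Integrating \eqref{elilocal} over $A$ and using the Neumann condition gives
\[
\int_A\int_B J(x-y)(v(y)-u(x))\,dy\,dx=0 .
\]
Integrating \eqref{parnl} over $B$, the $G$-term vanishes by symmetry of $G$. The coupling term equals
\[
\int_B\int_A J(x-y)(u(y)-v(x))\,dy\,dx ,
\]
which by the radial symmetry of $J$ is minus the previous quantity, hence zero. Therefore
\[
\frac{d}{dt}\int_B v\,dx=0 ,
\]
which justifies differentiating under the integral by $v\in C^1([0,\infty);L^2(B))$.
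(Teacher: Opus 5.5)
Your proof is correct, but it is organized differently from the paper's.

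\textbf{The paper's route.} The paper keeps two solution maps separate. $S_{BA}$ comes from Lax--Milgram for the local elliptic problem. $S_{AB}$ comes from a Banach fixed point for the nonlocal parabolic equation with $u$ frozen. The paper then shows that $S_{AB}\circ S_{BA}$ is a contraction on $C([0,T];L^2(B))$ for small $T$, and restarts on $[T,2T]$, and so on. For this model, the comparison principle and mass conservation are only asserted, by analogy with the first model.

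\textbf{Your route.} You eliminate $u$ through the bounded solution operator $S:L^2(B)\to H^1(A)$. You then observe that the reduced problem $v_t=\mathcal{L}v$ has a bounded generator on $L^2(B)$. This gives several things:
\begin{itemize}
\item $v=e^{t\mathcal{L}}v_0$ is global at once, with no smallness condition on $T$ and no restarting.
\item $v$ is $C^1$, in fact analytic, in $t$.
\item Uniqueness reduces to Lax--Milgram plus Gronwall.
\end{itemize}
This reduction works precisely because the parabolic part here is nonlocal, hence bounded. In the first model the parabolic part carries the Neumann Laplacian, which is presumably why the paper uses a fixed-point scheme that treats both models the same way.

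Your proposal also supplies what the paper leaves implicit:
\begin{itemize}
\item an actual proof of coercivity of $\int_A\nabla u\cdot\nabla w+\int_A a\,uw$ on $H^1(A)$, where the paper only says it is ``easy to see'';
\item positivity of $S$, by testing with $(Sv)^-$;
\item positivity of the flow, through a positive splitting and Picard iteration;
\item the mass computation, using $\varphi\equiv 1$ in the elliptic weak formulation together with the evenness of $J$.
\end{itemize}

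\textbf{One small correction in the comparison step.} The function $g(x):=\int_B G(x-y)\,dy$ is not constant in $x$; it only satisfies $0\le g\le 1$. So with $M=(1+b)$, your $\mathcal{P}$ must also contain the term $(1-g)v$. This is harmless because $1-g\ge 0$. Alternatively, take $M=(g+b)$, and then $\mathcal{P}$ is exactly the operator you wrote. Either way $\mathcal{P}$ is positive and $e^{-tM}$ is multiplication by a positive function. The Picard iterates therefore stay nonnegative, and they converge in $C([0,T];L^2(B))$ because the factorial bounds hold for Volterra equations with bounded operators. The argument goes through as you describe.
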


Moreover, in the same line of our first model, once the existence and uniqueness of global solutions to the elliptic--parabolic system are established, we look for their asymptotic behavior as $t \to \infty$. We first observe that constant states are stationary solutions of the coupled system. Our elliptic--parabolic system inherits this property. Although one component evolves parabolically while the other satisfies an elliptic nonlocal constraint, the coupling preserves mass in $B$ and we can show
an exponential decay.

\begin{theorem} \label{teo.asymp.2}
Let $v_0 \in L^2(B)$. Let $(u,v)$ be the solution to the elliptic--parabolic coupled system with initial datum $v_0$. Then $v(\cdot,t)$ converges exponentially to its mean value as $t \to \infty$. More precisely, there exists a constant $\lambda_1 > 0$ such that
\[
\| v(\cdot,t) - \overline{v_0} \|_{L^2(\Omega)}
\le
 e^{-\lambda_1 t}\|v_0\|_{L^2(B)},
\qquad t > 0,
\]
where
\[
\overline{v_0} := \frac{1}{|B|} \int_B v_0(x)\,dx
\]
denotes the spatial mean of $v_0$.
\end{theorem}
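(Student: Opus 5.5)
The plan is to reduce to mean-zero data and run an energy argument in $L^2(B)$, exactly parallel to Theorem \ref{thm:expdecay.intro} with the roles of $A$ and $B$ exchanged.

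\textbf{Reduction to mean zero.} By linearity, and because constants $(c,c)$ are stationary solutions of \eqref{elilocal}--\eqref{parnl}, the pair $(u-\overline{v_0},\,v-\overline{v_0})$ is again a solution. Its initial datum $v_0-\overline{v_0}$ has zero mean in $B$. Mass conservation in $B$ (the preceding existence theorem) then gives $\int_B v(x,t)\,dx=0$ for all $t$.

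\textbf{Energy identity.} Fix $t$. I multiply \eqref{parnl} by $v$ and integrate over $B$, and I multiply \eqref{elilocal} by $u$, integrate over $A$ and use the Neumann condition. Adding the two, the symmetry of $G$ and $J$ gives
\begin{equation*}
\frac12\frac{d}{dt}\int_B v^2\,dx = -\Big(\int_A|\nabla u|^2dx+\frac12\iint_{B\times B}G(x-y)(v(y)-v(x))^2dxdy+\iint_{A\times B}J(x-y)(v(y)-u(x))^2dydx\Big).
\end{equation*}
The elliptic equation contributes no time derivative; it only adds the quadratic form $\int_A|\nabla u|^2+\iint J\,u(u-v)$, which makes the right-hand side a full sum of squares. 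To justify the time differentiation I would use the regularity of the weak solution, $v\in C^1([0,\infty);L^2(B))$, which follows from the fixed-point construction since $u$ depends Lipschitz-continuously on $v$.

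\textbf{Poincaré-type inequality.} Define
\begin{equation*}
\lambda_1=\inf\Big\{\tfrac12\int_A|\nabla u|^2+\tfrac14\iint_{B\times B}G(v(y)-v(x))^2+\tfrac12\iint_{A\times B}J(v(y)-u(x))^2\Big\},
\end{equation*}
where the infimum runs over $u\in H^1(A)$ and $v\in L^2(B)$ with $\|v\|_{L^2(B)}=1$ and $\int_B v=0$. The identity above then reads $\frac{d}{dt}\|v\|^2_{L^2(B)}\le -4\lambda_1\|v\|^2_{L^2(B)}$, so Gronwall gives $\|v(t)\|_{L^2(B)}\le e^{-2\lambda_1 t}\|v_0\|_{L^2(B)}$. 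After renaming the constant, this is the estimate in the statement.

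\textbf{Main obstacle: $\lambda_1>0$.} I would argue by contradiction with a compactness argument. Take a minimizing sequence $(u_n,v_n)$ whose energy tends to $0$.
\begin{itemize}
\item From $\nabla u_n\to0$ and the boundedness of the $J$-term, $u_n$ is bounded in $H^1(A)$. Since $A$ is connected, Rellich gives a subsequence with $u_n\to c$ strongly in $L^2(A)$.
\item The quantity $v_n$ lacks compactness in $L^2(B)$; this is the delicate point. I would pass to a weak limit $v_n\rightharpoonup v$.
\item The $G$-term tends to zero, and the usual nonlocal Poincaré argument (as in \cite{julio}, using $G(0)>0$ and $B$ connected) forces the $v_n$ to be close to constants in $L^2$ strongly. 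Concretely, the term equals $\int_B v_n^2\,(G*\chi_B) - \int_B v_n (G*v_n\chi_B)$, and the convolution operator is compact.
\item Hence $v_n\to v$ strongly, and $v$ is constant. Since $\int_B v_n=0$, that constant is $0$.
\item This contradicts $\|v\|_{L^2(B)}=1$.
\end{itemize}
Strong convergence is the essential step, since weak convergence alone could lose the norm.

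Finally, the norm $\|\cdot\|_{L^2(\Omega)}$ in the statement should be read as $L^2(B)$, since $v$ lives on $B$.
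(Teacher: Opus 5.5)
Your proposal is correct and follows the same route as the paper. You reduce to mean-zero data using mass conservation in $B$, add the two tested equations to get the dissipation identity, bound the right-hand side by a Rayleigh-quotient constant $\lambda_1$, and conclude with Gronwall. Your proof that $\lambda_1>0$ is more complete than the paper's, which only says it follows as in the local case: there the argument relies on Rellich, and here compactness of $v\mapsto\int_B G(\cdot-y)v(y)\,dy$ together with $G*\chi_B\ge c>0$ on $\overline{B}$ is what replaces it. That argument already yields $\lambda_1>0$ from the $G$-term alone, so your bullet on $u_n$ is not needed.
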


The system \eqref{elilocal}--\eqref{parnl} can be obtained as a limit of a parabolic system in the following way: we consider a fully parabolic approximation of the coupled local/nonlocal system and prove that, considering a parameter $\varepsilon \to 0$, the solutions converge to a parabolic-elliptic \eqref{elilocal}--\eqref{parnl} limit problem. In this case, we consider $u_0\in L^2(A)$ and $v_0\in L^2(B)$, both fixed, and the following $\varepsilon$-equations, for $\epsilon>0$ small,
\begin{equation}
\begin{cases}
\varepsilon\,  u^\varepsilon_t(x,t)
= \Delta u^\varepsilon(x,t)
+ \displaystyle\int_B J(x-y)\bigl(v^\varepsilon(y,t)-u^\varepsilon(x,t)\bigr)\,dy,
& x\in A,\ t>0,\\[1.2ex]
\displaystyle \frac{\partial u^\varepsilon}{\partial \eta}(x,t)=0,
& x\in \partial A,\ t>0,\\[1.2ex]
u^\varepsilon(x,0)=u_0(x),
& x\in A.
\end{cases}
\end{equation}
\begin{equation}
\begin{cases}
v^\varepsilon_t(x,t)
= \displaystyle\int_B G(x-y)\bigl(v^\varepsilon(y,t)-v^\varepsilon(x,t)\bigr)\,dy \\[0.6em]
\hspace{2.8em}
\displaystyle \qquad \qquad \qquad \qquad + \int_A J(x-y)\bigl(u^\varepsilon(y,t)-v^\varepsilon(x,t)\bigr)\,dy,
& x\in B,\ t>0, \\[1.2ex]
v^\varepsilon(x,0)=v_0(x),
& x\in B.
\end{cases}
\end{equation}

We have the following result,

\begin{theorem}\label{approximation}
Let $u_0\in L^2(A)$ and $v_0\in L^2(B)$ be fixed and also fix $T>0$. Then, $u^\epsilon$ converges weakly to $u$ in $L^2 ((0,T),H^1(A))$ and $v^\epsilon$ converges weakly to $v (\cdot, t)$ in $L^\infty((0,T),L^2(B))$, 
with the limits $u$ and $v$ solving the system \eqref{elilocal}--\eqref{parnl}, with initial datum $v(x,0)=v_0(x)$.
\end{theorem}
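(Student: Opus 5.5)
We argue by uniform a priori estimates, weak compactness, and passage to the limit in the weak formulation. The basic tool is the energy identity. Multiply the $u^\varepsilon$ equation by $u^\varepsilon$ and integrate over $A$; multiply the $v^\varepsilon$ equation by $v^\varepsilon$ and integrate over $B$; then add. The symmetry of $J$ and $G$ gives
\[
\frac{\varepsilon}{2}\frac{d}{dt}\|u^\varepsilon\|_{L^2(A)}^2+\frac12\frac{d}{dt}\|v^\varepsilon\|_{L^2(B)}^2
+\int_A|\nabla u^\varepsilon|^2
+\frac12\iint_{B\times B}G(x-y)(v^\varepsilon(y)-v^\varepsilon(x))^2
+\iint_{A\times B}J(x-y)(v^\varepsilon(y)-u^\varepsilon(x))^2=0 .
\]
Integrating on $(0,t)$ yields bounds uniform in $\varepsilon\le 1$:
\begin{itemize}
\item $\sup_{t\le T}\|v^\varepsilon(t)\|_{L^2(B)}\le C$;
\item $\varepsilon\sup_t\|u^\varepsilon\|^2_{L^2(A)}\le C$;
\item $\int_0^T\!\int_A|\nabla u^\varepsilon|^2\le C$;
\item $\int_0^T\!\!\iint J(v^\varepsilon-u^\varepsilon)^2\le C$.
\end{itemize}
Here $C$ depends only on $\|u_0\|_{L^2(A)}$ and $\|v_0\|_{L^2(B)}$.

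The gradient bound alone does not control $\|u^\varepsilon\|_{L^2(L^2)}$, because $\varepsilon\|u^\varepsilon\|^2$ degenerates. To recover it, we use the coupling term. Write $u^\varepsilon=\overline{u^\varepsilon}+w^\varepsilon$ with $\int_A w^\varepsilon=0$. Poincaré--Wirtinger in the connected set $A$ bounds $w^\varepsilon$ in $L^2(0,T;L^2(A))$. The mean is controlled as follows. By Hypothesis \ref{hypo}, $\iint_{A\times B}J(x-y)\,dx\,dy=:m>0$, so
\[
m\,|\overline{u^\varepsilon}|^2\le 3\iint J(v^\varepsilon-u^\varepsilon)^2+3\iint J\,(w^\varepsilon)^2+3\iint J\,(v^\varepsilon)^2 .
\]
Each term on the right is bounded in $L^1(0,T)$. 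Hence $u^\varepsilon$ is bounded in $L^2(0,T;H^1(A))$. This is the step I expect to be the main obstacle: one must check that the factor $\int_B J(x-y)\,dy$ has positive integral over $A$, which is exactly the hypothesis on the support of $J$.

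Next we extract subsequences. We get $u^\varepsilon\rightharpoonup u$ weakly in $L^2(0,T;H^1(A))$ and $v^\varepsilon\rightharpoonup^* v$ weak-$*$ in $L^\infty(0,T;L^2(B))$. To identify the limit, take test functions $\varphi\in C^1_c([0,T);H^1(A))$ and $\psi\in C^1_c([0,T);L^2(B))$ in the weak formulations:
\[
-\varepsilon\int_0^T\!\!\int_A u^\varepsilon\varphi_t-\varepsilon\int_A u_0\varphi(0)+\int_0^T\!\!\int_A\nabla u^\varepsilon\nabla\varphi-\int_0^T\!\!\int_A\int_B J(v^\varepsilon-u^\varepsilon)\varphi=0,
\]
and the analogous identity for $v^\varepsilon$, which contains $\int_B v_0\psi(0)$ with no factor $\varepsilon$. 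Note that $\varepsilon\int\!\!\int u^\varepsilon\varphi_t\to0$, because $\sqrt{\varepsilon}\,u^\varepsilon$ is bounded. All remaining terms are linear in $(u^\varepsilon,v^\varepsilon)$. The operators $w\mapsto\int_B J(\cdot-y)w(y)\,dy$, $w\mapsto\int_A J(\cdot-y)w(y)\,dy$ and the $G$-convolution are bounded linear maps between the relevant $L^2$ spaces, so they pass to the weak limits.

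In the limit, $u$ solves \eqref{elilocal} weakly for a.e. $t$, with the Neumann condition encoded in the $H^1$ weak form, and $u_0$ is lost. Likewise $v$ solves \eqref{parnl} weakly with $v(0)=v_0$. Finally, the limit problem has a unique solution by the existence/uniqueness theorem for \eqref{elilocal}--\eqref{parnl} stated above. Therefore the whole family converges, not just a subsequence, which completes the proof.
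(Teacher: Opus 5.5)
Your proof is correct. Its skeleton (energy identity, weak compactness, and passage to the limit in the weak formulation with the time derivative moved onto the test function) is the one the paper intends, since the paper's proof of this theorem just refers back to Subsection~\ref{subsect-limit-1}. The real difference is in how you get the uniform $L^2$ bound on the fast component $u^\varepsilon$.

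Transplanted to this model, the paper's argument would obtain that bound from the energy estimate plus the Lipschitz estimate \eqref{uvnorme-p} for the elliptic solution operator, just as it uses \eqref{vunormp-e} for $v^\varepsilon$ in the first model. This step fails: for $\varepsilon>0$, $u^\varepsilon$ does not solve the elliptic equation, and the energy identity only controls $\varepsilon\|u^\varepsilon\|_{L^2(A)}^2$, as you observe.

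Your splitting $u^\varepsilon=\overline{u^\varepsilon}+w^\varepsilon$ closes this step. Poincar\'e--Wirtinger handles $w^\varepsilon$ through the gradient bound. The dissipated coupling term $\iint J(x-y)(v^\varepsilon-u^\varepsilon)^2$, together with $m=\iint_{A\times B}J(x-y)\,dx\,dy>0$, handles the mean. This uses only the connectedness of $A$ and the interaction condition in Hypothesis~\ref{hypo}, both assumed in the paper.

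You also invoke uniqueness of the limit problem to upgrade subsequential convergence to convergence of the whole family. That is what the statement asserts, and the paper's sketch does not address it. Uniqueness in the weak class is immediate: for the difference of two solutions, the energy identity gives $\frac{d}{dt}\|v\|_{L^2(B)}^2\le 0$.

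Both routes take for granted the well-posedness of the $\varepsilon$-system and the validity of the energy identity for its solutions. These are standard, because the system is the gradient flow of a convex quadratic energy with respect to the inner product $\varepsilon\langle\cdot,\cdot\rangle_{L^2(A)}+\langle\cdot,\cdot\rangle_{L^2(B)}$. Finally, the convergence you obtain for $v^\varepsilon$ is weak-$*$ in $L^\infty((0,T),L^2(B))$, which is the right reading of ``weakly'' in the statement.
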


Notice that now it is the initial datum for the local component, $u_0$, the one that is lost in the limit.

\bigskip

{\bf Organization of the paper.} First, in Section \ref{sect-1-model} we deal with the parabolic-elliptic case, and next, in Section \ref{sect-2-model} we just
sketch the proofs for the elliptic-parabolic system.

\section{The parabolic-elliptic model} \label{sect-1-model}

In this section we analyze the local--nonlocal, parabolic--elliptic model in which the parabolic part 
is given by the local operator and involves the heat equation. 

\subsection{Energy functional} \label{subsection.energy}

Recall that we introduced the functional $E_v : H^1(A) \to \mathbb{R}$ defined by
\[
E_v(u)
= \frac12\int_A |\nabla u(x)|^2\,dx
  + \frac12\int_A\int_B J(x-y)\,u^2 (x)\,dy\,dx
  - \int_A\int_B J(x-y)\,u(x)v(y)\,dy\,dx,
\]
where \(v \in L^2(B)\) is the minimizer of the functional \(F : L^2(B)\to \mathbb{R}\),
\begin{align}\label{min-v}
F(v)
= \frac14 \iint_{B\times B} G(x-y)\,(v(y)-v(x))^2\,dx\,dy
  + \frac12\int_A\int_B J(x-y)\,(v(y)-u(x))^2\,dy\,dx,
\end{align}
We adopt the convention that $E_v(u)=\infty$ whenever $u \notin H^1(A)$. It is straightforward to verify that these energy functionals are proper, convex, lower semicontinuous and coercive by Lemma \ref{lemma:coerc_v}. 

\begin{lemma} \label{lemma.energia}
Equation \eqref{localpar} is the \(L^2(A)\)-gradient flow of \(E_v\), while equation \eqref{nleli} is the Euler--Lagrange equation associated with the minimization of \(F\). 
\end{lemma}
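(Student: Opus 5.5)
The plan is to compute first variations directly: take the Gâteaux derivative of $E_v$ in $L^2(A)$ with $v$ frozen, and of $F$ in $L^2(B)$ with $u$ frozen, then identify the results with the right-hand sides of \eqref{localpar} and \eqref{nleli}.

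\emph{The functional $F$.} Fix $u\in L^2(A)$ and a direction $\varphi\in L^2(B)$. Since $F$ is quadratic, $F(v+s\varphi)$ is a polynomial of degree two in $s$. Differentiating at $s=0$ gives
\[
\frac{d}{ds}F(v+s\varphi)\Big|_{s=0}
=\frac12\iint_{B\times B}G(x-y)(v(y)-v(x))(\varphi(y)-\varphi(x))\,dx\,dy
+\int_A\int_B J(x-y)(v(y)-u(x))\varphi(y)\,dy\,dx .
\]
In the first integral I will use that $G$ is radial, hence even, and swap $x$ and $y$ in the part containing $\varphi(y)$. This turns it into $-\int_B\varphi(x)\int_B G(x-y)(v(y)-v(x))\,dy\,dx$. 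In the second integral I rename variables and again use the symmetry of $J$. The derivative then becomes
\[
-\int_B\varphi(x)\Big[\int_B G(x-y)(v(y)-v(x))\,dy+\int_A J(x-y)(u(y)-v(x))\,dy\Big]dx .
\]
All the integrals are finite by Fubini and Cauchy--Schwarz, because $G$ and $J$ are bounded with unit mass and $A$, $B$ are bounded. A minimizer of $F$, which exists by the coercivity in Lemma \ref{lemma:coerc_v}, therefore satisfies \eqref{nleli} a.e. in $B$. Conversely, since $F$ is convex, any solution of \eqref{nleli} is a minimizer.

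\emph{The functional $E_v$.} Fix $v\in L^2(B)$ and take $u,\varphi\in H^1(A)$. Then
\[
\frac{d}{ds}E_v(u+s\varphi)\Big|_{s=0}=\int_A\nabla u\cdot\nabla\varphi\,dx+\int_A\varphi(x)\int_B J(x-y)(u(x)-v(y))\,dy\,dx .
\]
The second term is a bounded linear functional on $L^2(A)$. So the question is when the first term extends to $L^2(A)$. By elliptic regularity for the Neumann problem on the smooth domain $A$, this happens precisely when $u\in D=\{u\in H^2(A):\partial_\eta u=0 \text{ on }\partial A\}$. In that case Green's formula gives $\int_A\nabla u\cdot\nabla\varphi=-\int_A\varphi\,\Delta u$. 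It follows that $\partial E_v(u)$ is single-valued and equals
\[
-\Delta u-\int_B J(\cdot-y)(v(y)-u)\,dy \quad\text{for } u\in D,
\]
and that $\partial E_v(u)=\emptyset$ otherwise. The gradient flow $u_t=-\partial E_v(u)$ is therefore exactly the equation together with the Neumann condition in \eqref{localpar}.

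\emph{Main obstacle.} The delicate step is the last one: identifying the full subdifferential of $E_v$ in $L^2(A)$, and not merely its Gâteaux derivative along $H^1$ directions, including the fact that the Neumann boundary condition is encoded in the domain $D$. I plan to handle it by writing $E_v$ as the Neumann Dirichlet energy plus a smooth quadratic perturbation. The subdifferential of the Dirichlet energy is the standard Neumann Laplacian. The perturbation is Fréchet differentiable on $L^2(A)$, so the sum rule for subdifferentials applies. I also need to be careful that $v$ is treated as frozen, even though it is itself determined by $u$.
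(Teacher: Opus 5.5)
Your proposal is correct and follows essentially the same route as the paper: compute the first variation of $F$ in $L^2(B)$, use the evenness of $G$ and relabel the $J$-term to obtain \eqref{nleli}, then vary $E_v$ with $v$ frozen and integrate by parts using the Neumann condition to get $\partial E_v(u)=-\Delta u+\int_B J(\cdot-y)(u-v(y))\,dy$. Your additional step, identifying the full $L^2(A)$-subdifferential with domain $\{u\in H^2(A):\partial_\eta u=0\}$ via the sum rule, makes rigorous what the paper does only formally, since the paper simply assumes the Neumann condition when it integrates by parts.
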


\begin{proof} Let $u\in H^1(A)$ be fixed and $v\in L^2(B)$ be a minimizer of $F$. 
For any $\phi\in L^2(B)$ and $t\in\mathbb{R}$ we consider the variation
$v_t := v + t\phi$. Since $v$ is a minimizer, we have
\[
\frac{d}{dt} F(v_t)\bigg|_{t=0} = 0.
\]

We compute the variation to obtain:
\begin{align*}
&\frac{d}{dt}
\frac14 \iint_{B\times B} G(x-y)\,(v_t(y)-v_t(x))^2\,dx\,dy
\bigg|_{t=0}+\frac{d}{dt}
\frac12 \iint_{A\times B} J(x-y)\,(v_t(y)-u(x))^2\,dy\,dx
\bigg|_{t=0} \\
&= \frac12 \iint_{B\times B} G(x-y)\,(v(y)-v(x))(\phi(y)-\phi(x))\,dx\,dy+\iint_{A\times B} J(x-y)\,(v(y)-u(x))\,\phi(y)\,dy\,dx.
\end{align*}
Therefore, to find the Euler Equation, i.e.
\[
0 
= \frac12 \iint_{B\times B} G(x-y)\,(v(y)-v(x))(\phi(y)-\phi(x))\,dx\,dy
  + \iint_{A\times B} J(x-y)\,(v(y)-u(x))\,\phi(y)\,dy\,dx,
\]
we must use the symmetry of $G$. The first term can be rewritten as
$$\frac12 \iint_{B\times B} G(x-y)\,(v(y)-v(x))(\phi(y)-\phi(x))\,dx\,dy=-\iint_{B\times B} G(x-y)\,(v(y)-v(x))\phi(x)\,dx\,dy . $$
Hence, we conclude
\[
\int_B \left[
\int_B G(x-y)\,(v(y)-v(x))\,dy
+ \int_A J(x-y)\,(v(x)-u(y))\,dy
\right] \phi(x)\,dx = 0
\]
for every $\phi\in L^2(B)$. Then, the expression inside the brackets must vanish almost everywhere in $B$, that is,
\[
0 = \int_B G(x-y)\,(v(y)-v(x))\,dy
    + \int_A J(x-y)\,(u(y)-v(x))\,dy, 
    \qquad x\in B,
\]
which is precisely the nonlocal elliptic equation \eqref{nleli}, where $v$ is a minimizer of $F$.

Now, we compute the first variation of $E_v$ with respect to $u$.
Let $u\in H^1(A)$ and $\varphi\in H^1(A)$ be arbitrary, and define $u_t := u + t\varphi$. Then
\[
\begin{array}{l}
\displaystyle
\partial E_v(u)=\frac{d}{d t} E_v(u_t)\bigg|_{t=0}
= \int_A \nabla u(x)\cdot \nabla \varphi(x)\,dx
  + \int_A\int_B J(x-y)\,u(x)\,\varphi(x)\,dy\,dx
 \\[10pt]
 \qquad \qquad \qquad \qquad \qquad \qquad \displaystyle - \int_A\int_B J(x-y)\,\varphi(x)\,v(y)\,dy\,dx.
 \end{array}
\]
Integrating by parts the first term (and using the homogeneous Neumann boundary condition), we obtain
\[
\int_A \nabla u\cdot \nabla \varphi\,dx
= - \int_A \Delta u(x)\,\varphi(x)\,dx.
\]
Thus,
\[
\frac{d}{dt} E_v(u_t)\bigg|_{t=0}
= \int_A \Big(
- \Delta u(x)
+ \int_B J(x-y)\,\big(u(x)-v(y)\big)\,dy
\Big)\,\varphi(x)\,dx.
\]
This shows that
\[
\partial E_v(u)(x)
= - \Delta u(x)
  + \int_B J(x-y)\,\big(u(x)-v(y)\big)\,dy.
\]
Hence, the parabolic equation
\[
u_t(x,t)
= \Delta u(x,t)
  + \int_B J(x-y)\,\big(v(y,t)-u(x,t)\big)\,dy
\]
can be written as
\[
u_t(t) + \partial E_v(u(t)) = 0 \quad \text{in } L^2(A),
\]
which is the $L^2(A)$--gradient flow associated with the energy $E_v$. 
\end{proof}

\subsection{Existence and uniqueness}

Now let us establish the existence and uniqueness of solutions for system \eqref{localpar}-\eqref{nleli}. We outline the method used to obtain uniqueness:
First, let $S(\cdot)$ be the continuous semigroup $L^2(A)$ associated with the Laplacian with Neumann boundary condition. The semigroup is a contraction in $L^2(A)$. Then, we define a mild solution  $u\in C([0,\infty);L^2(A))$ of \eqref{localpar}--\eqref{nleli} as
\begin{equation}\label{FVC}
u(t) = S(t)u_0 + \int_0^t S(t-\tau)\,L_B^A(u(\tau))\,d\tau, \qquad t>0,
\end{equation}
where $L_B^A:L^2(A)\to L^2(A)$ is given by the transmission term
\[ L_B^A(u(t))(x) = \int_B J(x-y)\,(v(y,t)-u(x,t))\,dy,\qquad x\in A, \]
where $v\in L^2(B)$ is the function associated with $u$ through the nonlocal elliptic \eqref{nleli}. To prove uniqueness, we use the formulation \eqref{FVC} to obtain the unique mild solution $u\in C([0,t];L^2(A))$. For a fixed $t>0$, given a function $u(\cdot,t)$ defined in $L^2(A)$, we solve the associated elliptic equation \eqref{nleli} in $B$, obtaining a unique weak solution $v(\cdot,t)\in L^2(B)$; this defines the first operator $T_{AB}:C([0,\infty);L^2(A))\to C([0,\infty);L^2(B))$
\[
T_{AB} : u(\cdot,t) \longmapsto v(\cdot,t).
\]
Similarly, given a function $v(\cdot,t)$ defined in $B$ and an initial datum $u_0\in L^2(A)$, we may insert it into the
parabolic problem in $A$, obtaining the corresponding solution $\tilde{u}(\cdot,t)$; this defines
the second operator $T_{BA}:C([0,\infty);L^2(B))\to C([0,\infty);L^2(A))$ defined by
\[
T_{BA} : v(\cdot,t) \longmapsto \tilde{u} (\cdot,t).
\]
This two-step procedure
\[
u(\cdot,t) \;\xrightarrow{T_{AB}}\; v(\cdot,t) \;\xrightarrow{T_{BA}}\; \tilde{u}(\cdot,t)
\]
can be regarded as the composition of the previous two operators, that is, we consider $T : C([0,T];L^2(A))\longrightarrow C([0,T];L^2(A))$ given by
$$T := T_{BA}\circ T_{AB},$$
which assigns to each $u$ the function obtained after solving first the
elliptic problem in $B$ and then the parabolic problem in $A$. Our main result is to show that $T : C([0,T];L^2(A))\longrightarrow C([0,T];L^2(A))$ is a strict contraction, for $t>0$ small enough. It follows from the Banach fixed--point theorem that there exists a unique $u_{fp}$ such that
$$u_{fp} = T(u_{fp}),$$
whose associated function $v=T_{AB}(u_{fp})$ solves the elliptic equation in $B$.
Therefore,
$$(u,v)\in C([0,T];L^2(A))\times C([0,T];L^2(B))$$
is the unique fixed point solution of the full parabolic--elliptic system.

To use this fixed point argument, let us start with the analysis of the first operator, $T_{AB}$. 
Let us fix a time $t>0$ and a function $u(\cdot,t)\in L^{2}(A)$, the following results show that
there exists a unique function $v(\cdot,t)\in L^{2}(B)$ solving the nonlocal elliptic problem
\begin{equation}\label{eq:v-elliptic}
    0=\int_{B} G(x-y)\,\bigl(v(y,t)-v(x,t)\bigr)\,dy
      +\int_{A} J(x-y)\,\bigl(u(y,t)-v(x,t)\bigr)\,dy,
      \qquad x\in B.
\end{equation}

First we state a coercivity propriety of the bilinear form associated to \eqref{eq:v-elliptic}
which guarantees the well-posedness of \eqref{eq:v-elliptic} and plays a fundamental role in
showing existence and uniqueness of solutions.

\begin{lemma}\label{FCoerp-e}\label{lemma:coerc_v}
For every $v \in L^2(B)$, there exists a positive constant $C_c>0$ (independent of $t$) such that
\[
\frac{1}{2}\int_{B}\int_{B} G(x-y)\bigl(v(x)-v(y)\bigr)^2 \,dx\,dy
+ \int_B a(x) (v(x))^2 \,dx \ge C_c \|v\|_{L^2(B)}^2,
\]
where $a$ denotes the function
$$a(x)=\int_A J(x-y)dy,$$
that is nonnegative by Hypothesis \ref{hypo}.
\end{lemma}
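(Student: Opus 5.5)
The plan is to argue by contradiction, using compactness and the fact that the quadratic form
\[
Q(v):=\frac{1}{2}\int_{B}\int_{B} G(x-y)\bigl(v(x)-v(y)\bigr)^2 \,dx\,dy+\int_B a(x)v(x)^2\,dx
\]
is homogeneous of degree two. It then suffices to show that
\[
C_c:=\inf\{Q(v):\ \|v\|_{L^2(B)}=1\}>0 .
\]
The constant is clearly independent of $t$, since neither $G$ nor $a$ depends on time.

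Suppose, to the contrary, that there is a sequence $v_n$ with $\|v_n\|_{L^2(B)}=1$ and $Q(v_n)\to 0$. I would first decompose $v_n=\overline{v_n}+w_n$, where $\overline{v_n}$ is the mean of $v_n$ over $B$ and $\int_B w_n=0$. The key tool is the nonlocal Poincaré inequality for the kernel $G$ on the connected set $B$:
\[
\int_B\int_B G(x-y)(w(x)-w(y))^2\,dx\,dy\ge \beta\|w\|^2_{L^2(B)}\quad\text{whenever }\int_B w=0 .
\]
This holds for continuous, nonnegative $G$ with $G(0)>0$; see \cite{julio}, and also \cite{acosta} for the coupled setting. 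Since the double integral only sees differences, it takes the same value for $v_n$ and $w_n$. Hence $\|w_n\|_{L^2(B)}\to 0$, and because $|\overline{v_n}|^2|B|=1-\|w_n\|^2\to 1$, we get $|\overline{v_n}|\to |B|^{-1/2}$.

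Next I would use the coupling term. From
\[
\int_B a v_n^2\ge \tfrac12\int_B a\,\overline{v_n}^2-\int_B a\,w_n^2\ge \tfrac12\overline{v_n}^2\int_B a-\|a\|_\infty\|w_n\|^2,
\]
and $\int_B a\,v_n^2\le Q(v_n)\to0$, we obtain in the limit
\[
0\ge \frac{1}{2|B|}\int_B a(x)\,dx .
\]
This contradicts $\int_B a=\int_B\int_A J(x-y)\,dy\,dx>0$. That strict positivity comes from Hypothesis \ref{hypo}: $J$ is continuous and there exist $x\in A$, $y\in B$ with $J(x-y)>0$, so $J(x-y)>0$ on a neighbourhood of $(x,y)$, which has positive measure in $A\times B$ because $A$ and $B$ are open.

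The main obstacle is the nonlocal Poincaré inequality. If I cannot simply cite it, I would prove it by the same contradiction scheme. Write $\int_B\int_B G(x-y)(w(x)-w(y))^2=2\int_B g\,w^2-2\langle w,\mathcal{G}w\rangle$ with $g(x)=\int_B G(x-y)\,dy$. Here $\mathcal{G}w(x)=\int_B G(x-y)w(y)\,dy$ is a compact operator on $L^2(B)$, and $g$ is bounded below on $\overline{B}$ by a positive constant because $G(0)>0$ and $G$ is continuous. A normalized mean-zero minimizing sequence has a weakly convergent subsequence; compactness of $\mathcal{G}$ forces strong convergence to a nonzero limit $w$ with vanishing energy. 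Then $w(x)=w(y)$ whenever $G(x-y)>0$, so $w$ is locally constant on $B$ (on balls of radius about the support of $G$ near $0$), hence constant by connectedness. Combined with mean zero this gives $w=0$, a contradiction.

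Alternatively, one can skip the decomposition and run this compactness argument directly on $Q$. In that version the limit is constant by connectedness, and the constant must vanish since $\int_B a>0$, which again contradicts $\|v\|_{L^2(B)}=1$.
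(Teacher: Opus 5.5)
Your proof is correct. It differs from the paper's at one step: how weak convergence is upgraded to strong convergence.

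The paper takes a normalized sequence with $Q(v_n)\to 0$ and uses weak lower semicontinuity to show that the weak limit is constant, and then zero. It then recovers strong convergence by propagation. This happens first on $B_1$, where $a>0$. Next it moves to $B_2=\{x\in B:\operatorname{dist}(x,B_1)<\operatorname{supp}G\}$ by expanding the square in the $G$-term, where the cross term vanishes as strong times weak. This is repeated finitely many times, using that $B$ is bounded and connected.

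You get strong convergence on all of $B$ in one step. You combine the compactness of the Hilbert--Schmidt operator $\mathcal{G}$ with $\min_{\overline B} g>0$, which is where $G(0)>0$ enters, and package this as a nonlocal Poincar\'e--Wirtinger inequality. The coupling term then only has to control the mean, and $\int_B a>0$ is enough for that.

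Your route buys two things:
\begin{itemize}
\item Robustness. The paper's chain tacitly needs the weights ($a$ on $B_1$, $\int_{B_{k-1}}G(\cdot-y)\,dy$ on $B_k$) to be bounded below. This fails near the edges of these sets and forces one to shrink them, whereas your lower bound on $g$ holds on all of $\overline B$.
\item An explicit constant. Your decomposition gives $C_c\ge\min\{\beta/4,\ t\int_B a/(2|B|)\}$ with $t=\min\{1,\beta/(4\|a\|_\infty)\}$.
\end{itemize}
The paper's route, for its part, follows the mechanism of the model more directly: the interaction with $A$ pins $v$ near the interface and $G$ carries this across $B$. It also avoids a separate Poincar\'e inequality.

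A minor point: Hypothesis~\ref{hypo} literally gives $x-y\in\operatorname{supp}J$ rather than $J(x-y)>0$. Since $A-B$ is open, this still yields a pair with $J(x-y)>0$, so $\int_B a>0$ stands.
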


\begin{proof}
Suppose, by contradiction, that the coercivity inequality does not hold. 
Then, for each $n \in \mathbb{N}$, there exists $v_n \in L^2(B)$ with $\|v_n\|_{L^2(B)} = 1$ such that
\[
\frac{1}{2}\int_{B}\int_{B} G(x-y)\bigl(v_n(x)-v_n(y)\bigr)^2 \,dx\,dy
+ \int_B a(x) (v_n(x))^2 \,dx \le \frac{1}{n}.
\]
Since both integrals are nonnegative, we have, as $n \to \infty$, 
\[
\frac{1}{2}\int_{B}\int_{B} G(x-y)\bigl(v_n(x)-v_n(y)\bigr)^2 \,dx\,dy \longrightarrow 0,
\quad \text{and} \quad
\int_B a(x) (v_n(x))^2 \,dx \longrightarrow 0.
\]

Since $(v_n)$ is bounded in $L^2(B)$, by the Banach--Alaoglu theorem there exists $v \in L^2(B)$ and a subsequence, still denoted by $(v_n)$, such that
\[
v_n \rightharpoonup v \quad \text{weakly in } L^2(B).
\]
By weak lower semicontinuity and the nonnegativity of $G$, we have
\[
0 = \liminf_{n \to \infty} 
\frac{1}{2}\int_{B}\int_{B} G(x-y)\bigl(v_n(x)-v_n(y)\bigr)^2 \,dx\,dy
\ge \frac{1}{2}\int_{B}\int_{B} G(x-y)\bigl(v(x)-v(y)\bigr)^2 \,dx\,dy.
\]
Hence,
\[
\int_{B}\int_{B} G(x-y)\bigl(v(x)-v(y)\bigr)^2 \,dx\,dy = 0.
\]
Since $G$ is positive and symmetric and $B$ is connected, it follows that $v(x) = \text{const}$ in $B$. 

We also have
\[
0 = \liminf_{n \to \infty} \int_B a(x) (v_n(x))^2 \,dx 
\geq \int_B a(x) (v(x))^2 \,dx,
\]
which implies $\int_B a(x) (v(x))^2 \,dx = 0$.
Since $a(x) \ge 0$, we obtain $v(x) = 0$ almost everywhere in $B$.
In addition, by the positivity of $a(x)$ for points in $B$ close to $A$ (recall that we assumed that
$dist(A,B) < supp(J)$) we have strong convergence in $L^2 (B_1)$ for $B_1 = \{x\in B : dist(x,A) < supp(J)\}$.
Now, let $B_2 = \{x\in B : dist(x,B_1) < supp(G)\}$. we have
\[
\begin{array}{l}
\displaystyle 0 = \liminf_{n \to \infty} 
\frac{1}{2}\int_{B}\int_{B} G(x-y)\bigl(v_n(x)-v_n(y)\bigr)^2 \,dx\,dy \\[10pt]
\displaystyle
\geq \liminf_{n \to \infty} 
\frac{1}{2}\int_{B_1}\int_{B_2} G(x-y)\bigl(v_n(x)-v_n(y)\bigr)^2 \,dx\,dy
\\[10pt]
\displaystyle = 
\liminf_{n \to \infty} 
\frac{1}{2}\int_{B_1}\int_{B_2} G(x-y) (v_n)^2 (x) \,dx\,dy
+ \frac{1}{2}\int_{B_1}\int_{B_2} G(x-y) (v_n)^2 (y) \,dx\,dy 
\\[10pt]
\displaystyle \qquad \qquad - 
\int_{B_1}\int_{B_2} G(x-y) (v_n(x))(v_n(y)) \,dx\,dy.
\end{array}
\]
Hence, since we have strong convergence in $L^2 (B_1)$ and weak
convergence in $L^2 (B_2)$ to zero, we get
$$
\liminf_{n \to \infty} 
\frac{1}{2}\int_{B_2} \left(\int_{B_1} G(x-y) \, dy \right) (v_n)^2 (x) \,dx =0,
$$
from where we obtain that $v_n$ converges strongly in $L^2 (B_2)$. 
Iterating this idea a finite number of times (we assumed that $B$ is connected and bounded), we conclude that
$v_n$ converges strongly to zero in $L^2 (B)$, but this contradicts the fact that $\|v_n\|_{L^2(B)} = 1$.
Therefore, the coercivity inequality must hold.
\end{proof}

Using Lax-Milgram's Theorem, we can prove the uniqueness of \eqref{eq:v-elliptic}.

\begin{proposition}\label{v(u)existence}
Let $t>0$ be fixed. For each \(u(\cdot,t)\in L^2(A)\), the equation
\begin{equation}\label{eq:elliptic_v}
\int_B G(x-y)\bigl(v(y,t)-v(x,t)\bigr)\,dy
\;+\; \int_A J(x-y)\bigl(u(y,t)-v(x,t)\bigr)\,dy \;=\; 0
\end{equation}
has a unique solution \(v(\cdot,t)\in L^2(B)\).  Hence, for each $u\in C([0,T];L^2(A))$, we define
\[ T_{AB}\bigl(u)(t):=T_{AB}(u(\cdot,t))=v(\cdot,t), \quad \text{for all}\quad  t\in [0,T]
\]
and we have that $T_{AB}$ is a well-defined linear bounded operator.
\end{proposition}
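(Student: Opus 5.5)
The plan is to put \eqref{eq:elliptic_v} in weak form on $L^2(B)$ and apply Lax--Milgram, with Lemma \ref{lemma:coerc_v} providing the coercivity. Fix $t$ and write $u=u(\cdot,t)$. Using $a(x)=\int_A J(x-y)\,dy$, the equation reads
\[
a(x)v(x)-\int_B G(x-y)\bigl(v(y)-v(x)\bigr)\,dy=\int_A J(x-y)u(y)\,dy=:f_u(x),\qquad x\in B.
\]
Multiplying by $\phi\in L^2(B)$ and symmetrizing the $G$-term with the symmetry of $G$, exactly as in the proof of Lemma \ref{lemma.energia}, leads to the bilinear form
\[
\mathcal{B}(v,\phi)=\frac12\iint_{B\times B}G(x-y)\bigl(v(y)-v(x)\bigr)\bigl(\phi(y)-\phi(x)\bigr)\,dx\,dy+\int_B a(x)v(x)\phi(x)\,dx
\]
and the functional $\ell(\phi)=\int_B f_u\phi\,dx$. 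Because every integral converges absolutely for $v,\phi\in L^2(B)$, solving $\mathcal{B}(v,\phi)=\ell(\phi)$ for all $\phi$ is the same as solving \eqref{eq:elliptic_v} a.e.: one can undo the symmetrization and test with arbitrary $\phi\in L^2(B)$.

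Next I check the Lax--Milgram hypotheses. For continuity of $\mathcal{B}$: $0\le a\le\int J=1$, and $\int_B G(x-y)\,dy\le 1$. By Cauchy--Schwarz and Young's inequality for convolutions, $\bigl|\iint G(x-y)v(y)\phi(x)\bigr|\le\|v\|_{L^2(B)}\|\phi\|_{L^2(B)}$. Hence $|\mathcal{B}(v,\phi)|\le C\|v\|_{L^2(B)}\|\phi\|_{L^2(B)}$ with $C$ an absolute constant (for instance $C=3$). Coercivity is exactly Lemma \ref{lemma:coerc_v}: $\mathcal{B}(v,v)\ge C_c\|v\|^2_{L^2(B)}$. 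For continuity of $\ell$: by Young's inequality, $\|f_u\|_{L^2(B)}\le\|J\|_{L^1}\|u\|_{L^2(A)}=\|u\|_{L^2(A)}$. Lax--Milgram then gives a unique $v(\cdot,t)\in L^2(B)$, together with the estimate $\|v(\cdot,t)\|_{L^2(B)}\le C_c^{-1}\|u(\cdot,t)\|_{L^2(A)}$. (Since the equation does not change with $t$ apart from the datum, one could also argue directly that $\mathcal{B}$ induces a bounded, coercive, self-adjoint operator $\mathcal{L}$ on $L^2(B)$ and set $v=\mathcal{L}^{-1}f_u$.)

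For the operator statement: the map $u\mapsto f_u\mapsto v$ is a composition of linear maps and is therefore linear. The bound gives $\|T_{AB}(u)(t)-T_{AB}(u)(s)\|_{L^2(B)}\le C_c^{-1}\|u(t)-u(s)\|_{L^2(A)}$, so $T_{AB}u\in C([0,T];L^2(B))$ whenever $u\in C([0,T];L^2(A))$. Taking the supremum over $t$ yields $\|T_{AB}u\|_{C([0,T];L^2(B))}\le C_c^{-1}\|u\|_{C([0,T];L^2(A))}$.

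I expect the only delicate point to be coercivity, and Lemma \ref{lemma:coerc_v} already handles it; note that its constant $C_c$ does not depend on $t$, which is why the bound is uniform in time. Apart from that, the one thing to be careful about is justifying that the weak formulation is equivalent to the pointwise (a.e.) equation, and this follows from Fubini because the kernels are bounded.
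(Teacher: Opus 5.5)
Your proposal is correct and follows the paper's proof: you use the same bilinear form $\mathcal{B}$ on $L^2(B)$, take coercivity from Lemma \ref{lemma:coerc_v}, and apply Lax--Milgram to get a linear bound $\|v(\cdot,t)\|_{L^2(B)}\le C\|u(\cdot,t)\|_{L^2(A)}$. Your Young-inequality estimate gives the slightly cleaner constant $C_c^{-1}$ in place of $\|J\|_{L^2(A\times B)}/C_c$. You also verify the weak/a.e.\ equivalence and the time-continuity of $T_{AB}u$, which the paper leaves implicit or defers to Proposition \ref{EUnlp-e}.
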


\begin{proof}For $v,\varphi\in L^2(B)$ we consider the bilinear form
\[
\mathcal{B}\bigl(v,\varphi\bigr)
:=
\frac{1}{2}\int_{B}\int_{B} G(x-y)\bigl(v(x)-v(y)\bigr)\bigl(\varphi(x)-\varphi(y)\bigr)\,dx\,dy
+ \int_B a(x)\,v(x)\,\varphi(x)\,dx,
\]
where
\[
a(x):=\int_A J(x-y)\,dy \ge 0 .
\]
For the right-hand side we define, for each fixed $t>0$, the linear functional (depending on \(u(\cdot,t)\))
\[
\ell_{u}\bigl(\varphi\bigr)
:= \int_B\int_A J(x-y)\,u(y)\,\varphi(x)\,dy\,dx.
\]
Since \(G\) is nonnegative, we have
\[
\mathcal{B}\bigl(v,v\bigr)
= \frac{1}{2}\int_{B}\int_{B} G(x-y)\bigl(v(x)-v(y)\bigr)^2 dxdy
+ \int_B a(x)\, v(x)^2 dx \;\ge\; 0.
\]
Under the assumptions in Lemma~\ref{lemma:coerc_v}, there exists \(C_c>0\) such that
\[
\mathcal{B}\bigl(v,v\bigr) \;\ge\; C_c\,\|v\|_{L^2(B)}^2,
\qquad \forall\,v\in L^2(B).
\]
Moreover, using that $G,J$ are continuous, one can easily check that
\(\mathcal{B}\) is continuous in \(L^2(B)\times L^2(B)\) and that
\(\ell_{u}\) is a continuous linear functional in \(L^2(B)\). 
Hence all hypotheses of Lax--Milgram's theorem (in the Hilbert space \(L^2(B)\))
are satisfied and we conclude that there exists a unique \(v\in L^2(B)\) such that
\[
\mathcal{B}\bigl(v,\varphi\bigr)
= \ell_{u}\bigl(\varphi(\cdot)\bigr),
\qquad \forall\,\varphi\in L^2(B),
\]
which is exactly the weak formulation of \eqref{eq:elliptic_v}. By Lax--Milgram we
also obtain the estimate
\[
\|v\|_{L^2(B)}
\;\le\; \frac{1}{C_c}\,\|\ell_{u}\|_{(L^2(B))'}
\;\le\; \frac{\|J\|_{L^2(A\times B)}}{C_c}\,\|u\|_{L^2(A)}.
\]
Therefore, the operator \(T_{AB}\) is linear and bounded from
\(L^2(A)\) to \(L^2(B)\). Restricting the domain, we may also view
\(T_{AB}:H^1(A)\to L^2(B)\) as a bounded linear operator.
\end{proof}

For $T>0$ we will use the Banach spaces
\[
C\big([0,T];L^{2}(A)\big), \qquad 
C\big([0,T];L^{2}(B)\big),
\]
endowed with the norms
\[
\|u\|_{C([0,T];L^{2}(A))}
:= \sup_{s\in[0,T]} \|u(\cdot,s)\|_{L^{2}(A)},\qquad
\|v\|_{C([0,T];L^{2}(B))}
:= \sup_{s\in[0,T]} \|v(\cdot,s)\|_{L^{2}(B)}.
\]
It follows from Proposition~\ref{v(u)existence} that the operator $T_{AB}:L^{2}(A)\to L^{2}(B)$ 
is linear and bounded. The following result guarantees that $T_{AB}$ is a Lipschitz operator.

\begin{proposition}[Existence and Uniqueness for the Nonlocal Elliptic Equation]\label{EUnlp-e}
Let $T>0$ and let $u\in C([0,t];L^{2}(A))$. Take
\[v=T_{AB}(u)\in C([0,T];L^{2}(B))
\]
which solves the nonlocal elliptic problem \eqref{nleli} for every $s\in[0,T]$. 

Given $u_{1},u_{2}\in C([0,T];L^{2}(A))$ and letting 
$v_{1}=T_{AB}(u_1)$ and $v_{2}=T_{AB}(u_2)$, there exists a constant 
$C_e:=C_e(J,G)>0$ such that
\begin{equation}\label{vunormp-e}
    \|v_{1}-v_{2}\|_{C([0,T];L^{2}(B))}
    \;\le\; C_e\,\|u_{1}-u_{2}\|_{C([0,T];L^{2}(A))}.
\end{equation}
In particular, the solution operator $u\mapsto T_{AB}(u)$ is Lipschitz continuous from 
$L^{2}(A)$ to $L^{2}(B)$ uniformly in time.
\end{proposition}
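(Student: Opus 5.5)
The plan is to exploit linearity of $T_{AB}$ together with the coercivity of Lemma~\ref{lemma:coerc_v}, applied pointwise in time, and then take the supremum over $s\in[0,T]$. There are three steps: continuity in time, the difference equation, and the energy estimate.

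\emph{Continuity in time.} First I check that $v=T_{AB}(u)$ really belongs to $C([0,T];L^2(B))$. For each $s\in[0,T]$, Proposition~\ref{v(u)existence} gives a unique $v(\cdot,s)\in L^2(B)$, and the map $L^2(A)\ni w\mapsto T_{AB}(w)\in L^2(B)$ is linear and bounded. Hence
\[
\|v(\cdot,s)-v(\cdot,s')\|_{L^2(B)}=\|T_{AB}(u(\cdot,s)-u(\cdot,s'))\|_{L^2(B)}\le C\|u(\cdot,s)-u(\cdot,s')\|_{L^2(A)},
\]
and continuity of $u$ in time transfers to $v$. By construction $v(\cdot,s)$ solves \eqref{nleli} for every $s$.

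\emph{Difference equation.} Fix $s$ and set $w=v_1(\cdot,s)-v_2(\cdot,s)$ and $z=u_1(\cdot,s)-u_2(\cdot,s)$. Subtracting the weak formulations, linearity gives
\[
\mathcal{B}(w,\varphi)=\ell_z(\varphi)\qquad\text{for all }\varphi\in L^2(B).
\]

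\emph{Energy estimate.} Test with $\varphi=w$. The left side is bounded below by Lemma~\ref{lemma:coerc_v}:
\[
\mathcal{B}(w,w)\ge C_c\|w\|^2_{L^2(B)}.
\]
For the right side, apply Cauchy--Schwarz in $L^2(A\times B)$:
\[
|\ell_z(w)|=\Big|\int_B\int_A J(x-y)z(y)w(x)\,dy\,dx\Big|\le \|J\|_{L^2(A\times B)}\|z\|_{L^2(A)}\|w\|_{L^2(B)}.
\]
Here $\|J\|_{L^2(A\times B)}$ means $\big(\iint_{A\times B} J(x-y)^2\,dx\,dy\big)^{1/2}$, which is finite since $J$ is continuous and $A,B$ are bounded. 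Dividing by $\|w\|_{L^2(B)}$ (the case $w=0$ is trivial) gives
\[
\|w\|_{L^2(B)}\le C_e\|z\|_{L^2(A)},\qquad C_e:=\frac{\|J\|_{L^2(A\times B)}}{C_c}.
\]
This constant depends only on $J$, $G$, $A$ and $B$, and not on $s$, because $C_c$ is time-independent. Taking the supremum over $s\in[0,T]$ yields \eqref{vunormp-e}.

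The only delicate point is that the coercivity constant must be uniform in $s$. This holds because the bilinear form $\mathcal{B}$ does not depend on $t$: only the right-hand side $\ell_u$ does. Lemma~\ref{lemma:coerc_v} therefore applies with a single constant $C_c$ for all times. Everything else is routine.
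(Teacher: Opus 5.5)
Your proposal is correct and follows essentially the paper's argument: subtract the equations for $v_1,v_2$, test with $w=v_1-v_2$, use the time-independent coercivity constant $C_c$ of Lemma~\ref{lemma:coerc_v} and Cauchy--Schwarz in $L^2(A\times B)$ to get $C_e=\|J\|_{L^2(A\times B)}/C_c$, then take the supremum in $s$ and obtain time continuity from linearity of $T_{AB}$. The only cosmetic difference is that you work directly with the weak form $\mathcal{B}(w,w)=\ell_z(w)$ instead of multiplying the strong equation by $w$ and invoking the nonlocal Green identity. This also lets you avoid the stray $|B|$ factor that appears in the paper's intermediate Cauchy--Schwarz display.
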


\begin{proof}
Fix $s\in[0,T]$. By Proposition~\ref{v(u)existence}, for each
$u(\cdot,s)\in L^{2}(A)$ there exists a unique solution
\[
v(\cdot,s)=T_{AB}(u(\cdot,s))\in L^{2}(B)
\]
of the nonlocal elliptic equation \eqref{nleli}. To prove the Lipschitz
estimate, let $u_{1},u_{2}\in C([0,T];L^{2}(A))$ and define
\[
v_{1}(\cdot,s)=T_{AB}(u_{1}(\cdot,s)), \qquad
v_{2}(\cdot,s)=T_{AB}(u_{2}(\cdot,s)).
\]
Set $w(\cdot,s)=v_{1}(\cdot,s)-v_{2}(\cdot,s)$. Subtracting the two
equations satisfied by $v_{1}$ and $v_{2}$ gives
\[
\int_{B}G(x-y)\bigl(w(y,s)-w(x,s)\bigr)\,dy
+
\int_{A}J(x-y)\bigl(u_{1}(y,s)-u_{2}(y,s)-w(x,s)\bigr)\,dy
=0,
\qquad x\in B.
\]

We now multiply the above identity by $w(x,s)$ and integrate over $B$. Using
the symmetric structure of the kernel $G$, we can apply Green Identity,
$$
	\int_{B}\int_{B} G(x-y)\,(w(y)-w(x))\,w(x)\,dy\,dx
	= -\frac{1}{2}\int_{B}\int_{B} G(x-y)\,(w(y)-w(x))^2\,dy\,dx.
$$
 for the nonlocal term and
writing $a(x)=\int_{A}J(x-y)\,dy$, we obtain
$$
\begin{array}{l}
\displaystyle \frac12\!\int_{B}\!\int_{B}G(x-y)\bigl(w(y,s)-w(x,s)\bigr)^{2}\,dx\,dy
\;+\;\int_{B}a(x)\,w(x,s)^{2}\,dx
\\[10pt]
\qquad \displaystyle =
\int_{B}\int_{A}J(x-y)\bigl(u_{1}-u_{2}\bigr)(y,s)\,w(x,s)\,dy\,dx.
\end{array}
$$
By Lemma~\ref{lemma:coerc_v}, there exists $C_{c}>0$
such that
\[
C_{c}\,\|w(\cdot,s)\|_{L^{2}(B)}^{2}
\;\le\;
\frac12\!\int_{B}\!\int_{B}G(x-y)\bigl(w(y,s)-w(x,s)\bigr)^{2}\,dx\,dy
+\int_{B}a(x)\,w(x,s)^{2}\,dx.
\]
Combining with the previous identity we obtain
\[
C_{c}\|w(\cdot,s)\|_{L^{2}(B)}^{2}
\;\le\;
\int_{B}\int_{A}J(x-y)\bigl(u_{1}-u_{2}\bigr)(y,s)\,w(x,s)\,dy\,dx.
\]
Applying Cauchy--Schwarz's inequality and Fubini's theorem, we get
\begin{align*}
C_{c}\|w(\cdot,s)\|_{L^{2}(B)}^{2}
&\le
\|w(\cdot,s)\|_{L^{2}(B)}
\bigg(
\int_{B}
\Big|\int_{A}J(x-y)(u_{1}-u_{2})(y,s)\,dy\Big|^{2}
dx
\bigg)^{1/2} \\
&\le |B|
\|w(\cdot,s)\|_{L^{2}(B)}\,
\Big(
\int_{B}\int_{A}J(x-y)^{2}\,dy\,dx
\Big)^{1/2}
\|u_{1}(\cdot,s)-u_{2}(\cdot,s)\|_{L^{2}(A)}.
\end{align*}

If $w(\cdot,s)\equiv 0$, the desired inequality holds trivially. Otherwise we
divide both sides by $\|w(\cdot,s)\|_{L^{2}(B)}$ and obtain
\[
\|v_{1}(\cdot,s)-v_{2}(\cdot,s)\|_{L^{2}(B)}
\le
C_e\,\|u_{1}(\cdot,s)-u_{2}(\cdot,s)\|_{L^{2}(A)},
\]
where
\[
C_e=\frac{1}{C_{c}}
\bigg(\int_{B}\int_{A}J(x-y)^{2}\,dy\,dx\bigg)^{1/2}>0.\]
Taking the supremum in $s\in[0,T]$ yields the uniform-in-time estimate
\[
\|v_{1}-v_{2}\|_{C([0,T];L^{2}(B))}
\;\le\;
C_e\,\|u_{1}-u_{2}\|_{C([0,T];L^{2}(A))}.
\]

Finally, to prove that $v\in C([0,T];L^{2}(B))$, let $s_{n}\to s$ in $[0,T]$.
Then
\[
\|v(\cdot,s_{n})-v(\cdot,s)\|_{L^{2}(B)}
=
\|T_{AB}(u(\cdot,s_{n})-u(\cdot,s))\|_{L^{2}(B)}
\le
C_e\,\|u(\cdot,s_{n})-u(\cdot,s)\|_{L^{2}(A)}\to 0,
\ \text{as} \ n\to \infty,\]
since $u\in C([0,T];L^{2}(A))$. Thus $v\in C([0,T];L^{2}(B))$ and the proof is complete.
\end{proof}

\begin{remark}\label{vregularity} {\rm
We can describe the regularity of $v$ in terms of the kernels $J$ and $G$ as follows: let us write the nonlocal elliptic equation \eqref{nleli} explicitly
\[
0 \;=\; \int_{B} G(x-y)\bigl(v(y)-v(x)\bigr)\,dy
      +\int_{A} J(x-y)\bigl(u(y)-v(x)\bigr)\,dy.
\]
Since the kernels are nonnegative and not identically zero by Hypothesis \eqref{hypo}, we may rewrite this as
\begin{equation}\label{eq:v-formula}
v(x,t)
    =\frac{\displaystyle\int_{B} G(x-y)\,v(y)\,dy
           +\displaystyle\int_{A} J(x-y)\,u(y)\,dy}
           {\displaystyle\int_{B} G(x-y)\,dy
           +\displaystyle\int_{A} J(x-y)\,dy}.
\end{equation}

If we further assume that $J$ and $G$ are smooth, then the right-hand side of 
\eqref{eq:v-formula} is a convolution of $u$ and $v$ with smooth kernels.
Since $u\in L^{2}(A)$ and $v\in L^{2}(B)$, it follows that $v$ 
inherits the regularity of the kernels. In particular, $v$ becomes smooth whenever $J$ and $G$ are smooth. }
\end{remark}

Before analyzing the parabolic equation \eqref{localpar}, motivated by the semi-group approach, for a given $T>0$ to be chosen later and for a fix $z\in C([0,T];L^2(B))$, we define the operator,
\[
T_{1}: C([0,T];L^{2}(A)) \longrightarrow C([0,T];L^{2}(A))
\]
by
\begin{align}\label{def-T1}
(T_{1}u)(t)
    &= w(t) \\
    &= S(t)u_{0}
       + \int_{0}^{t} S(t-\tau)
          \left[
             \int_{B} J(\cdot-y)\, z(y,\tau)\,dy
             - \int_{B} J(\cdot-y)\, u(\cdot,\tau)\,dy
          \right] d\tau,
       \qquad t\in[0,T]. \nonumber
\end{align}

%where $w$ is a solution to 
%\begin{equation} w(x,t)=S(t)u_0+\int_{0}^{T} S(t-\tau)\int_B J(x-y)(z(u,\tau)-w(x,\tau))dyd\tau, (x,t)\in A\times (0,T), \end{equation}
This representation formula in terms of the heat semigroup naturally motivates the definition of the operator $T_{BA}$, which helps to prove existence and uniqueness via a fixed-point argument.

\begin{proposition}[Existence and Uniqueness for the Local Parabolic Equation]
\label{EULocalp-e}
Let $T>0$ be fixed and take $v\in C([0,T];L^{2}(B))$.  
Then given $u_0\in L^2(A)$, the parabolic problem \eqref{localpar} admits a unique solution
\[
\tilde{u} \in C([0,T];L^2(A)).
\]
In particular, this defines a linear operator
\[
T_{BA} : C([0,T];L^{2}(B)) \longrightarrow C([0,T];\textcolor{blue}{L^{2}(A)}),
\qquad 
T_{BA}(v)=\tilde{u},
\]
which associates to each $v$ the unique solution $\tilde{u}$ of
\eqref{localpar} on $[0,T]$.

Moreover, if $v_{1},v_{2}\in C([0,T];L^{2}(B))$ and 
$\tilde{u}_{1}=T_{BA}(v_{1})$, $\tilde{u}_{2}=T_{BA}(v_{2})$, then there exists a constant
$C_e(t)>0$ such that
\begin{equation}\label{uvnormp-e}
    \|\tilde{u}_{1}-\tilde{u}_{2}\|_{C([0,T];L^{2}(A))}
    \;\le\;
    C_e(t)\,\|v_{1}-v_{2}\|_{C([0,T];L^{2}(B))}.
\end{equation}
\end{proposition}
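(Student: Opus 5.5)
The plan is to realize $\tilde u=T_{BA}(v)$ as the fixed point of the operator $T_1$ from \eqref{def-T1}, with $z=v$ fixed. The equation \eqref{localpar} with $v$ given is linear in $u$. Its coupling term splits as $\int_B J(x-y)v(y,\tau)\,dy - a_A(x)u(x,\tau)$, where $a_A(x):=\int_B J(x-y)\,dy\in[0,1]$. So the mild formulation is exactly $u=T_1u$.

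First I check that $T_1$ maps $C([0,T];L^2(A))$ into itself. The map $t\mapsto S(t)u_0$ is continuous because $S$ is a $C_0$ contraction semigroup. The forcing $f(\tau):=\int_B J(\cdot-y)v(y,\tau)\,dy-a_A u(\tau)$ lies in $C([0,T];L^2(A))$. Indeed, by Cauchy--Schwarz,
\[
\Big\|\int_B J(\cdot-y)v(y,\tau)\,dy\Big\|_{L^2(A)}\le \|J\|_{L^2(A\times B)}\|v(\tau)\|_{L^2(B)},
\]
and $\|a_Au\|_{L^2(A)}\le\|u\|_{L^2(A)}$. The Duhamel integral of a continuous forcing is then continuous in time.

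Next I prove the contraction. For $u_1,u_2$ with the same $v$,
\[
\|T_1u_1(t)-T_1u_2(t)\|_{L^2(A)}\le\int_0^t\|a_A(u_1-u_2)(\tau)\|\,d\tau\le T\,\|u_1-u_2\|_{C([0,T];L^2(A))}.
\]
So $T_1$ is a strict contraction when $T<1$. To avoid any restriction on $T$, I would rather use the weighted norm $\sup_t e^{-2t}\|u(t)\|_{L^2(A)}$, which is equivalent to the usual one. In this norm the Lipschitz constant is $\le 1/2$ for every $T$. Banach's fixed point theorem then gives a unique mild solution $\tilde u$ on all of $[0,T]$. Alternatively one can iterate on intervals of length $1/2$, using that the constants do not depend on the data. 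Linearity of $T_{BA}$ in $v$ (for fixed $u_0$ it is affine, and differences are linear) follows from uniqueness.

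For the stability estimate \eqref{uvnormp-e}, I subtract the two mild formulations with the same $u_0$. Writing $W=\tilde u_1-\tilde u_2$,
\[
\|W(t)\|\le \int_0^t \|J\|_{L^2(A\times B)}\|v_1-v_2\|_{C([0,T];L^2(B))}\,d\tau+\int_0^t\|W(\tau)\|\,d\tau.
\]
Gronwall's inequality then gives $\|W(t)\|\le \|J\|_{L^2(A\times B)}\,T e^{T}\|v_1-v_2\|_{C([0,T];L^2(B))}$, so $C_e(T)=\|J\|_{L^2(A\times B)}Te^T$. One could instead test the weak form with $W$ for a sharper constant, but Gronwall suffices.

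The main subtlety is interpreting "solution" as the mild solution \eqref{FVC} and ensuring the contraction works on arbitrary $[0,T]$; the weighted norm handles this. The remaining bounds are routine.
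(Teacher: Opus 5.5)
Your proposal is correct and follows the paper's route. You realize $\tilde u$ as the Banach fixed point of $T_1$ with $z=v$, and you get the Lipschitz bound by subtracting the two Duhamel formulas with the same $u_0$ and using $\|S(t)\|\le 1$.

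The differences are only technical. The paper takes $T<1$ and iterates, then absorbs the $W$-term to get $C_e(t)=tC_J|A|/(1-tC_J|B|)$, valid only for small $t$. Your weighted norm and Gronwall argument instead give $C_e(T)=\|J\|_{L^2(A\times B)}Te^T$ on any interval, which still tends to $0$ as $T\to0$, as needed in Theorem \ref{EUp-e}. You are also right that $T_{BA}$ is affine rather than linear when $u_0\neq0$.
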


\begin{proof} 
Consider $T_{1}:C([0,T]; L^2(A))\to C([0,T]; L^2(A))$ and $z\in C([0,T]; L^2(B))$ given by
$$(T_1u_1)(t)=S(t){u_0^1}+\int_{0}^{t}S(t-\tau)\bigg[\int_BJ(x-y)z(y,\tau)dy-\int_BJ(x-y)u_1(x,\tau)dy)\bigg] d\tau,$$
where $S:L^2(A)\to L^2(A)$ is the heat semigroup associated with the Laplacian with Neumann boundary conditions 
(that has norm $\|S\|=1$). Since $\int_{\mathbb{R}^N}J(z)dz=1$, taking $u_1,u_2 \in C([0,T];L^2(A))$ we have
\begin{equation}
\begin{array}{l}
\displaystyle \|(T_1u_1 -T_1 u_2) (t)\|_{L^2(A)}
\displaystyle \leq \bigg\|S(t){(u^1_{0}-u_0^2)}-\int_{0}^{t}S(t-\tau)\bigg[\int_BJ(x-y)(u_2(x,\tau)-u_1(x,\tau)dy)\bigg] d\tau\bigg \|_{L^2(A)}\\[10pt]
\displaystyle 
\qquad \qquad \leq {\|(u^1_{0}-u_0^2)\|_{L^2(A)}}+\int_{0}^{t}\|u_2(x,\tau)-u_1(x,\tau)\|_{L^2(A)}d\tau \\[10pt]
\displaystyle 
\qquad \qquad 
\leq {\|(u^1_{0}-u_0^2)\|_{L^2(A)}}+t\sup_{\tau \in [0,T]}\|u_2(x,\tau)-u_1(x,\tau)\|_{L^2(A)}.
\end{array}
\end{equation}
Then, we conclude that
\begin{align}
\|T_1u_1-T_1u_2\|_{C([0,T];L^2(A))}\leq \|u^1_{0}-u_0^2\|_{L^2(A)}+T\|u_2-u_1\|_{C([0,T];L^2(A))}.
\end{align}

Now, choose $T<1$ and $u_0^1=u_0^2=u_0$ in the above inequality, to obtain that $T_1$ is a strict contraction in $C([0,T];L^2(A))$ for each fixed $z\in C([0,T];L^2(B))$ and then the existence and uniqueness part of the theorem follows from Banach fixed-point theorem in $[0,T]$. To extend the solution to $[0,\infty)$, we consider $u(x,t)\in L^2(A)$ as initial datum of \eqref{localpar}
and obtain a solution up to $[0,2T]$. Iterating this procedure we get a solution defined for all $t\in (0,\infty)$.

Now, consider $T_{BA}:C([0,T]; L^2(B))\to C([0,T]; \textcolor{blue}{L^2(A)})$ given by
$(T_{BA}v)(t)=w$ which is defined as before by
$$w(t,x)=S(t)u_0+\int_{0}^{t}S(t-\tau)\bigg[\int_BJ(x-y)v(y,\tau)dy-\int_BJ(x-y)w(x,\tau)dy)\bigg] d\tau.$$
This is well defined by Lemma \eqref{EULocalp-e}.
Then considering two initial data $w_0^1,w_0^2\in L^2(A)$, we get
$$
\begin{array}{rl}
\displaystyle \|(T_{BA}v_1(\cdot,t)-T_{BA}v_2(\cdot,t)\|_{L^2(A)}
& \displaystyle \le 
\|S(t)(w_0^1 - w_0^2)\|_{L^2(A)}  \\[10pt]
& \displaystyle \qquad + 
\left\|
\int_0^t 
S(t-\tau)\!\left[
   \int_B J(x-y)\big( v_1(y,\tau) - v_2(y,\tau) \big)\,dy
\right] d\tau
\right\|_{L^2(A)} \\[10pt]
& \displaystyle  \qquad +
\left\|
\int_0^t 
S(t-\tau)\!\left[
   \int_B J(x-y)\,dy \, \big( w_1(x,\tau) - w_2(x,\tau) \big)
\right] d\tau
\right\|_{L^2(A)} .
\end{array}
$$

Using the contractivity of the semigroup, we obtain
\begin{align*}
\|T_{BA}v_1(\cdot,t)-T_{BA}v_2(\cdot,t)\|_{L^2(A)} 
&\le 
\|w_0^1 - w_0^2\|_{L^2(A)}   \\
&\quad +
\int_0^t
\left\|
  \int_B J(x-y)\big( v_1(y,\tau) - v_2(y,\tau) \big)\,dy
\right\|_{L^2(A)} d\tau \\
&\quad +
\int_0^t
\left\|
  \int_B J(x-y)\,dy \, \big( w_1(x,\tau)-w_2(x,\tau) \big)
\right\|_{L^2(A)} d\tau .
\end{align*}

Hence,
\begin{align*}
\|w_1 - w_2\|_{C([0,T];L^2(A))}
&= 
\|T_{BA}u_1 - T_{BA}u_2\|_{C([0,T];L^2(A))}
\\
&\le \|w_0^1 - w_0^2\|_{L^2(A)} \\
& \qquad 
+ t C_J |A| \|v_1 - v_2\|_{C([0,T];L^2(B))}
+ t C_J |B| \|w_1 - w_2\|_{C([0,T];L^2(A))}.
\end{align*}
Taking $w_1 = w_2 = u_0\in L^2(A)$, 
we obtain
\[
\|w_1 - w_2\|_{C([0,T];L^2(A))}
\le 
\underbrace{
\frac{t\, C_J |A|}{1 - t\, C_J |B|}
}_{=:C(t)}
\,
\|v_1 - v_2\|_{C([0,T];L^2(B))},
\]
which yields inequality \eqref{uvnormp-e}, that is valid for $t\in [0,T]$ such that $0<1 - t\, C_J |B|<1$.
\end{proof}

Finally, we get the existence and uniqueness for the whole 
system \eqref{localpar}-\eqref{nleli}, combining the two previous lemmas.

\begin{theorem}\label{EUp-e}
Let $u_0\in L^2(A)$.
Then, there exists a unique fixed point solution 
\[
u\in C([0,\infty);L^2(A)), \qquad v\in C([0,\infty);L^2(B)),
\]
to the coupled parabolic-elliptic problem
\eqref{localpar}--\eqref{nleli}.
\end{theorem}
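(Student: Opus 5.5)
The plan is to apply the Banach fixed point theorem to the composition $T=T_{BA}\circ T_{AB}$ on $C([0,T];L^2(A))$ for a short time $T$. Then I will extend the local solution step by step to $[0,\infty)$, using the fact that the contraction constant does not depend on the initial datum.

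\textbf{Short-time contraction.} Fix $u_0\in L^2(A)$. For $u\in C([0,T];L^2(A))$, Proposition~\ref{EUnlp-e} gives $v=T_{AB}(u)\in C([0,T];L^2(B))$. Proposition~\ref{EULocalp-e} then gives $\tilde u=T_{BA}(v)\in C([0,T];L^2(A))$, the mild solution of \eqref{localpar} with datum $u_0$ and coupling $v$. So $T$ maps $C([0,T];L^2(A))$ into itself. For $u_1,u_2$ with the same $u_0$, I combine \eqref{vunormp-e} and \eqref{uvnormp-e}:
\[
\|Tu_1-Tu_2\|_{C([0,T];L^2(A))}\le C(T)\,C_e\,\|u_1-u_2\|_{C([0,T];L^2(A))},
\qquad C(T)=\frac{T C_J|A|}{1-TC_J|B|}.
\]
Since $C_e=C_e(J,G)$ is independent of time and $C(T)\to0$ as $T\to0$, I choose $T_0>0$ with $T_0C_J|B|<1/2$ and $C(T_0)C_e<1$. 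Banach's theorem then yields a unique $u$ on $[0,T_0]$ with $u=Tu$. Setting $v=T_{AB}(u)$, the function $u$ is the mild solution \eqref{FVC} of the parabolic equation and $v$ solves \eqref{nleli} for each $s$. Uniqueness of the pair follows from two facts: any fixed-point solution $(u,v)$ must have $v=T_{AB}(u)$ by the uniqueness in Proposition~\ref{v(u)existence}, and it must have $u=T(u)$.

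\textbf{Global continuation.} The key observation is that $T_0$ depends only on $J,G,|A|,|B|$, through $C_c$, $C_e$ and $C_J$, and not on $u_0$. I restart the argument at time $T_0$ with initial datum $u(\cdot,T_0)\in L^2(A)$, using the semigroup property $S(t+s)=S(t)S(s)$. This shows that the concatenated function satisfies \eqref{FVC} on $[0,2T_0]$ and that continuity in $L^2$ holds at the junction. By induction I obtain $u\in C([0,\infty);L^2(A))$. Continuity of $v$ in time follows from the Lipschitz estimate in Proposition~\ref{EUnlp-e}, which gives $v\in C([0,\infty);L^2(B))$. For global uniqueness, two solutions agree on $[0,T_0]$ by the local uniqueness, hence they have the same datum at $T_0$, and so on for every interval.

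\textbf{Main obstacle.} The delicate step is making sure the contraction time is uniform, so that the continuation does not stall. This requires that the constant in \eqref{uvnormp-e} depends only on the length of the interval and not on the starting time or the data, and that the elliptic map is applied pointwise in time, so it has no memory. A second point to check is that restarting at $kT_0$ reproduces the Duhamel formula on the whole interval. To verify this, I would split $\int_0^t=\int_0^{kT_0}+\int_{kT_0}^t$ and use the semigroup property of $S$.
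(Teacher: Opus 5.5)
Your proposal is correct and follows the paper's proof essentially step for step. Both apply Banach's fixed point theorem to $T=T_{BA}\circ T_{AB}$ on a short interval $[0,t_0]$, using that $C_e$ is time-independent and $C_e(t)\to 0$, and then restart on consecutive intervals of length $t_0$. Your extra points, namely that $t_0$ does not depend on the datum and that the Duhamel formula concatenates across restarts by $S(t+s)=S(t)S(s)$, make explicit what the paper leaves implicit in ``repeating the above argument on consecutive intervals''.
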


\begin{proof} Fix $T>0$.
Let $T_{AB}$ and $T_{BA}$ be the operators introduced in
Lemma~\ref{EUnlp-e} and Lemma~\ref{EULocalp-e}, respectively, and define
the composition
\[
T := T_{BA}\circ T_{AB}.
\]

From Lemma~\ref{EUnlp-e}, we have that for any
$u_1,u_2\in C([0,T];L^2(A))$ and
$v_1=T_{AB}(u_1)$, $v_2=T_{AB}(u_2)$,
there exists a constant $C_e>0$ depending only on the kernels
$J$ and $G$ such that
\begin{equation}\label{eq:AB-lip}
\|v_1-v_2\|_{C([0,T];L^2(B))}
\;\le\; C_e\,\|u_1-u_2\|_{C([0,T];L^2(A))}.
\end{equation}

Similarly, from Lemma~\ref{EULocalp-e}, for any
$v_1,v_2\in C([0,T];L^2(B))$ and
$u_1=T_{BA}(v_1)$, $u_2=T_{BA}(v_2)$,
there exists a constant $C_e(t)>0$, depending on $t$, such that
\begin{equation}\label{eq:BA-lip}
\|u_1-u_2\|_{C([0,T];L^2(A))}
\;\le\; C_e(t)\,\|v_1-v_2\|_{C([0,T];L^2(B))}.
\end{equation}

Let $u_1,u_2\in C([0,T];L^2(A))$ and set
$v_1=T_{AB}(u_1), \ v_2=T_{AB}(u_2)$. Then
\[
w_1 := T(u_1)=T_{BA}(v_1),\qquad 
w_2 := T(u_2)=T_{BA}(v_2).
\]
Applying first \eqref{eq:BA-lip} and then \eqref{eq:AB-lip} gives
$$
\begin{array}{rl}
\displaystyle \|T(u_1)-T(u_2)\|_{C([0,T];L^2(A))}
& \displaystyle = \|w_1-w_2\|_{C([0,T];L^2(A))}  \\[10pt]
& \displaystyle \le C_e(t)\,\|v_1-v_2\|_{C([0,T];L^2(B))}    \\[10pt]
&\displaystyle \le (C_e(t)) C_e\,\|u_1-u_2\|_{C([0,T];L^2(A))}.
\end{array}
$$
Thus $T$ is a Lipschitz operator with constant
$C_e\,(C_e(t))>0$.

Since $C_e$ is fixed and $C_e(t)\to 0$ as $t\downarrow 0$
(from the explicit computation made in Lemma~\eqref{EULocalp-e}),
we may choose $t_0>0$ sufficiently small such that
\[
L(t_0) = C_e\,C_e(t_0) <1.
\]
Hence, for such $t_0$, the operator
\[
T : C([0,t_0];L^2(A)) \to C([0,t_0];L^2(A))
\]
is a contraction and by Banach's fixed point theorem, the map $T$ has a unique fixed point
\[
u\in C([0,t_0];L^2(A)),
\qquad u = T(u).
\]
Setting $v=T_{AB}(u)\in C([0,t_0];L^2(B))$,
the pair $(u,v)$ satisfies the parabolic--elliptic system
\eqref{localpar}--\eqref{nleli} on $[0,t_0]$.
Repeating the above argument on consecutive intervals of length $t_0$,
we obtain a unique fixed point solution on any finite interval $[0,T]$.
In particular, existence and uniqueness hold on $[0,T]$ for the original $T>0$.

Repeating the above argument on consecutive intervals of length t0, we obtain a unique fixed point
solution on any finite interval [0,T], obtaining global existence and uniqueness.
\end{proof}

\subsubsection{Mass conservation}

In agreement with what is understood as Neumann boundary conditions,
we show that the total mass of the solution is preserved in time.

\begin{proposition}\label{massconervationp-e} For every $u_0\in {L^2(A)}$ and $t>0$, the unique solution of \eqref{localpar} preserves the total mass in $A$, that is
$$\int_A u(y,t)dy=\int_A u_0(y)dy.$$
\end{proposition}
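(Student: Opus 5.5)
The plan is to integrate the parabolic equation \eqref{localpar} over $A$ and show that the coupling term has zero integral. The reason it vanishes is that $v(\cdot,t)=T_{AB}(u(\cdot,t))$ solves the elliptic equation \eqref{nleli}. Since the solution of Theorem~\ref{EUp-e} is only a mild solution in $C([0,\infty);L^2(A))$, I will work with the representation \eqref{FVC} rather than differentiate in time directly.

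\emph{Step 1: the Neumann heat semigroup preserves the integral.} For every $f\in L^2(A)$ and $s\ge0$,
\[
\int_A S(s)f\,dx=\int_A f\,dx .
\]
This holds because $S(s)$ is self-adjoint on $L^2(A)$ and the constant $1$ lies in the kernel of the Neumann Laplacian, so $S(s)1=1$ and $\langle S(s)f,1\rangle=\langle f,S(s)1\rangle=\langle f,1\rangle$. Integrating \eqref{FVC} over $A$ and using Fubini (the integrand is in $C([0,t];L^2(A))$ and $A$ is bounded) gives
\[
\int_A u(x,t)\,dx=\int_A u_0\,dx+\int_0^t\int_A L_B^A(u(\tau))(x)\,dx\,d\tau .
\]

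\emph{Step 2: the transmission term has zero integral.} I will show that for each $\tau$,
\[
I(\tau):=\int_A\int_B J(x-y)\bigl(v(y,\tau)-u(x,\tau)\bigr)\,dy\,dx=0 .
\]
To do this, integrate \eqref{nleli} in $x$ over $B$. The $G$-term vanishes because $G$ is symmetric: exchanging $x$ and $y$ maps $\int_B\int_B G(x-y)(v(y)-v(x))\,dy\,dx$ to its negative. What remains is
\[
\int_B\int_A J(x-y)\bigl(u(y)-v(x)\bigr)\,dy\,dx=0 .
\]
Renaming variables and using that $J$ is radial, hence even, this integral equals $-I(\tau)$. All integrals are absolutely convergent, since $u\in L^2(A)$, $v\in L^2(B)$, $J$ and $G$ are bounded and the domains are bounded, so Fubini applies. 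The equation \eqref{nleli} holds a.e.\ in $B$, as obtained from Lax--Milgram in Proposition~\ref{v(u)existence}. Equivalently, one can take $\varphi\equiv1\in L^2(B)$ in the weak formulation $\mathcal B(v,\varphi)=\ell_u(\varphi)$, which gives $\int_B a\,v=\int_B\int_A J\,u$, and this is exactly $I(\tau)=0$.

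Combining the two steps yields $\int_A u(\cdot,t)=\int_A u_0$.

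The only delicate point is Step 1, namely justifying that the mild formulation may be integrated against the constant $1$. This is straightforward once $S(s)1=1$ and self-adjointness are noted, so I do not expect a real obstacle. As an alternative, for smooth data one could differentiate $\int_A u$ in time, use $\int_A\Delta u=\int_{\partial A}\partial_\eta u=0$, and conclude by density of smooth data together with the $L^2$-continuous dependence on $u_0$ established in Proposition~\ref{EULocalp-e}.
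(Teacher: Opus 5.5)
Your proposal is correct and follows essentially the same route as the paper. Both arguments integrate \eqref{localpar} over $A$ and kill the transmission term by integrating \eqref{nleli} over $B$, using the symmetry of $G$ and the evenness of $J$. The one addition on your side is Step~1: using self-adjointness and $S(s)1=1$ on the mild formula \eqref{FVC} gives a rigorous justification of the vanishing of the Laplacian contribution, which the paper simply drops when it integrates formally.
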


\begin{proof}
  Notice that, integrating \eqref{localpar} in space and time we get
   $$
    \int_A u(x,t)\, dx-\int_Au_0(x)\, dx=\int_A\int_{0}^{t}\int_B J(x-y)(v(y,t)-u(x,t))\, dy \, dt \, dx .
$$
From \eqref{nleli}, using the simmetry of $G$, we get 
$$\int_A\int_{0}^{t}\int_B J(x-y)(v(y,t)-u(x,t))\,  dy \, dt \, dx=-\int_B \int_{0}^{t}\int_B G(x-y)(v(y,t)-v(x,t))
\, dy\, dt\, dx = 0.$$
Hence, for all $t>0$ 
$$\int_A u(x,t)\, dx=\int_Au_0(x)\, dx.$$
\end{proof}

\begin{remark} {\rm Proposition \eqref{massconervationp-e} also says that, if we consider $$w(x,t)=u(x,t)-\displaystyle\frac{1}{|A|}\int_A u_0(x)dx$$ we get that $w$ is the solution to our problem \eqref{localpar}-\eqref{nleli} with initial datum $$w_0(x,0)=u_0(x)-\displaystyle \frac{1}{|A|}\int_A u_0(x)dx. $$ Then, when we study the asymptotic behavior
as $t\to \infty$ we can consider, without loss of generality, the problem \eqref{localpar}-\eqref{nleli} with $\int_A u(x,t)dx=0$ for all $t\geq 0$. }
\end{remark}

\subsection{Comparison principle}

The comparison principle also holds for this problem. 
This follows using the same method used previously to obtain existence and uniqueness 
of solutions. For the sake of completeness, we will include some details here.

\begin{proposition}\label{comparison1} Let $u_0\in L^2(A)$ be an initial datum for \eqref{localpar} with $u_0\geq 0$. If $u$ is a solution of \eqref{localpar}, then $u\geq 0$.
\end{proposition}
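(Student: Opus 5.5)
We prove positivity by running the existence scheme of Theorem~\ref{EUp-e} inside the closed cone of nonnegative functions. The goal is to show that the contraction $T=T_{BA}\circ T_{AB}$ maps $\mathcal K:=\{u\in C([0,t_0];L^2(A)):\ u\ge 0\}$ into itself. Since $\mathcal K$ is closed, the unique fixed point then lies in $\mathcal K$. Iterating on consecutive intervals of length $t_0$, each time using $u(\cdot,kt_0)\ge0$ as the new initial datum, gives $u\ge0$ for all $t>0$.

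\textbf{Step 1: $T_{AB}$ preserves positivity.} Fix $s$ and $u(\cdot,s)\ge0$, and let $v=T_{AB}(u)$. We test the weak formulation $\mathcal B(v,\varphi)=\ell_u(\varphi)$ from Proposition~\ref{v(u)existence} with $\varphi=v^-=\max(-v,0)$. The right-hand side is $\ell_u(v^-)\ge0$, since $J\ge0$, $u\ge0$ and $v^-\ge0$. On the left, write $v=v^+-v^-$. The local term gives $\int_B a\,v\,v^-=-\int_B a(v^-)^2$. For the nonlocal term, the pointwise inequality $(v(x)-v(y))(v^-(x)-v^-(y))\le-(v^-(x)-v^-(y))^2$ yields
\[
\mathcal B(v,v^-)\le-\mathcal B(v^-,v^-)\le -C_c\|v^-\|_{L^2(B)}^2
\]
by Lemma~\ref{lemma:coerc_v}. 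Combining the two sides gives $v^-=0$, so $v\ge0$.

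\textbf{Step 2: $T_{BA}$ preserves positivity.} Let $z=v\ge0$ and $u_0\ge0$. Since $a_B(x):=\int_BJ(x-y)\,dy\le1$, we rewrite the inner fixed-point map $T_1$ with an exponential factor. Set $\tilde w=e^{t}w$; it satisfies
\[
\tilde w_t=\Delta\tilde w+(1-a_B)\tilde w+e^{t}\int_BJ(\cdot-y)z\,dy .
\]
Its mild formulation is
\[
\tilde w(t)=S(t)u_0+\int_0^tS(t-\tau)\Big[(1-a_B)\tilde w(\tau)+e^{\tau}\!\int_BJ z\Big]d\tau .
\]
This map is still a contraction for small $t$, and it preserves $\mathcal K$ because the Neumann heat semigroup $S(t)$ is positivity preserving, $1-a_B\ge0$, and $J,z\ge0$. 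Starting the Picard iteration from $0$, all iterates are nonnegative, so the limit satisfies $\tilde w\ge0$, hence $w\ge0$. An alternative check is to test the equation with $w^-$ to get $\frac{d}{dt}\|w^-\|^2\le0$ with $w^-(0)=0$; this requires enough regularity to justify the computation.

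\textbf{Main obstacle.} The delicate point is Step 2. The mild formula contains the term $-a_Bu$ with a negative sign, so positivity of $T_1$ is not visible directly; the exponential shift by $e^{t}$ is what removes this sign issue. Step 1 is routine once the truncation inequality for the nonlocal form is verified. By linearity, the same argument gives the comparison statement $u_0\ge\widetilde u_0\Rightarrow u\ge\widetilde u$.
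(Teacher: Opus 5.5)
Your proof is correct and shares the paper's skeleton: show that $T_{AB}$ and $T_{BA}$ each preserve nonnegativity, so the contraction $T=T_{BA}\circ T_{AB}$ leaves the closed cone $\mathcal K$ invariant, then restart on successive intervals. You make the closedness of $\mathcal K$ explicit, which the paper leaves implicit in ``iterating this procedure''.

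The two positivity steps are done differently, though.

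For $T_{AB}$, the paper argues pointwise. It uses continuity of $v$ on $\overline B$ (Remark~\ref{vregularity}), evaluates the equation at a minimum point $x_0$, and reads off $a(x_0)v(x_0)\ge 0$. This is short, but it needs continuity of $v$. As written, it also only yields $v(x_0)\ge 0$ when $a(x_0)=\int_A J(x_0-y)\,dy>0$. If the minimum is attained at a point of $B$ out of reach of $A$, an extra propagation step through the support of $G$ and the connectedness of $B$ is required. Your truncation argument tests with $v^-$ and uses $(v(x)-v(y))(v^-(x)-v^-(y))\le-(v^-(x)-v^-(y))^2$. It works directly in $L^2(B)$, and the coercivity of Lemma~\ref{lemma:coerc_v} absorbs exactly this degeneracy, so it is the more robust of the two.

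For $T_{BA}$, the paper cites that the Neumann operator $\Delta-b$ generates a positive semigroup \cite{ouhabaz} and applies variation of constants. Your shift $\tilde w=e^{t}w$ turns the absorption term into the nonnegative bounded perturbation $(1-a_B)\tilde w$, since $a_B\le\int J=1$. So you only use positivity of the plain Neumann heat semigroup, and your Picard iteration in the cone is essentially a proof of the cited fact.

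The one point you leave implicit is that $e^{t}$ times the mild solution of the original equation is the mild solution of the shifted one. Substituting the mild formula for $w$ into $\int_0^t S(t-s)e^{s}w(s)\,ds$ and applying Fubini gives this in one line, so it is not a gap.
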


\begin{proof}
Consider $u_1(x,t)\geq 0$ a function in $C([0,T];L^2(A))$ to start our fixed point argument. Then, by Proposition \ref{EUnlp-e}, we obtain a function $v_1\in C([0,t];L^2(B))$ that satisfies the equation
\begin{equation}
    0=\int_B G(x-y)(v_1(y,t)-v_1(x,t))dy+\int_A J(x-y)(u_1(y,t)-v_1(x,t))dy.
\end{equation}
For each fixed $t>0$ with $v_1(\,\cdot\,,t)\in L^2(B)$ and considering the Hypothesis \ref{hypo} on the kernels $J$ and $G$, the convolutions are continuous functions on $\overline{B}$.  

%By Remark~\eqref{vregularity}, $v_1(\,\cdot\,,t)$ is continuous on $\overline{B}$ and, in particular,
%attains a maximum at some point $x_1\in\overline{B}$, and
%$$\int_B G(x_1-y)\underbrace{(v_1(x_1,t)-v_1(y,t)}_{\geq 0})dy+\underbrace{\int_{A}J(x_1-y)dy}_{\geq 0}v_1(x_1,t)dx=\underbrace{\int_AJ(x_1-y)u_1(y,t)dy}_{\geq 0}.$$
%Then, we conclude that
%$$v_1(x_1,t)\ge 0.$$ 

Now, considering $x_0\in \overline{B}$ as a minimum point of $v_1$, we get that $v_1(x_0,t)$ is nonnegative, since
it holds that 
$$\underbrace{\int_{A}J(x_0-y)dy}_{\geq 0}v_1(x_0,t)=\int_B G(x_0-y)\underbrace{(v_1(y,t)-v_1(x_0,t)}_{\geq 0})dy+\underbrace{\int_AJ(x_0-y)u(y,t)dy}_{\geq 0} \geq 0.$$
We conclude that 
$$v_1\geq 0.$$ 

Applying Proposition \ref{EULocalp-e}, we get that there exists a solution $u_2\in C([0,T];L^2(A))$ that solves
    \begin{equation}
\begin{cases}
    {(u_2)}_t(x,t)=\Delta u_2(x,t) -u_2(x,t)\displaystyle\int_B J(x-y)dy+ \displaystyle\int_B J(x-y)v_1(y,t)dy, \qquad x\in A\times (0,T)  \\
\displaystyle    \frac{\partial u_2}{\partial \eta}(x,t)=0, \qquad x\in \partial A \times (0,T)\\
    u_2(x,0)=u_0(x), \qquad x\in A
\end{cases}\end{equation}
Since $\int_B J(x-y)v_1(y)dy\ge 0$ and the operator $\Delta-b(x)$, $b(x)=\int_B J(x-y)dy$, with Neumann boundary conditions generates a positive semigroup in $L^2(A)$ (see \cite{ouhabaz}, p.~49), by the variation of constants formula, if $u_0\ge 0$, 
then the solution satisfies $$u_2(t)\ge 0$$ for all $t\ge 0$.
Iterating this procedure using the fixed point operator $T:C([0,T];L^2(A))\to C([0,T];L^2(A))$ introduced in the proof of Theorem \ref{EUp-e}, we conclude that the unique fixed point $u$ is nonnegative.
\end{proof}

As a consequence of Proposition \ref{comparison1}, we present an example showing that solutions of \eqref{localpar}--\eqref{nleli} may fail to be continuous across the interface, when $\partial A\,\cap\, \partial B\neq \emptyset.$

\begin{remark} {\rm
By Hypothesis~\ref{hypo} on the kernels $J$ and $G$, any solution 
$v(\cdot,t)\in L^2(B)$ of \eqref{nleli} is continuous for every $t>0$,  
as stated in Remark~\ref{vregularity}.  

To look for the regularity of $u(\cdot,t)$, notice that it solves the linear parabolic problem 
inside a smooth domain $A$,
 \begin{equation*}
\begin{cases}
u_t(x,t)
   = \Delta u(x,t)- b(x)u(x,t) + g(x,t),
   & x\in A,\ t>0,\\[0.4em]
\dfrac{\partial u}{\partial \eta}(x,t)=0,
   & x\in\partial A,\ t>0,\\[0.4em]
u(x,0)=u_0(x),
   & x\in A,
\end{cases}
\end{equation*}
where
\[
g(x,t)
   := \displaystyle\int_B J(x-y)v(y,t)\,dy
 , \qquad b(x)=\int_B J(x-y)dy.
\]
and $u_0\in C(\overline{A})$. For each fixed $t>0$ we have $g(\cdot,t)\in  C(\overline A)$. By \cite[Theorem~4.31]{Lunardi}, the solution $u$ belongs to
$C((0,T];C(\overline A))$ whenever $u_0\in C(\overline A)$ and 
$g\in C([0,T];C(\overline A))$.
\medskip
We are now ready to construct a counterexample showing that solutions of
\eqref{localpar}--\eqref{nleli} may be discontinuous across the interface, when we are dealing with the case
$\partial A\cap \partial B \neq \emptyset$.  
Consider the one-dimensional setting with
\[
A=(-1,0) \qquad \text{and} \qquad B=(0,1).
\]
and $u_0(x)=1$ in $A$. Then, $u\in C((0,T];C(\overline{A}))$ and $v\in C([0,T];C(\overline{B}))$ are mild solutions satisfying \eqref{localpar}-\eqref{nleli}. We want to show that $u(0,t)\neq v(0,t)$ for $t>0$ small. The equation for $u$ is
\[
u_t = u_{xx}+\int_0^1 J(x-y)(v(y,t)-u(x,t))\,dy, \qquad x\in (-1,0).
\]
Since $u_0\equiv1$, we have $u_{xx}(x,0)=0$ and therefore
\begin{equation}\label{u_t(x,0)}
u_t(x,0)=\int_0^1 J(x-y)(v_0(y)-1)\,dy, \qquad x\in (-1,0).
\end{equation}
Since $v(.,t)$ solves 
\begin{align}\label{v_0solves}
0=\int_0^1 G(x-y)(v(y,t)-v(x,t))\,dy
+\int_{-1}^0 J(x-y)(u(x,t)-v(x,t))\,dy, \qquad x\in (0,1).
\end{align}
Then $v_0\in L^2(B)$ satisfies the following expression
\begin{align}\label{v_0}
v_0(x)\bigg[\int_{0}^{1}G(x-y)\,dy+\int_{-1}^{0}J(x-y)\,dy\bigg]=\int_0^1 G(x-y)v_0(y)\,dy
+\int_{-1}^0 J(x-y)\,dy, \qquad x\in (0,1).\end{align}
Since all kernels are continuous, then \eqref{v_0} give us that $v_0$ is continuous in $(0,1)$, then there exists $M=\max_{x\in [0,1]}v_0(x):=v_0(\tilde x)$ and
\begin{align*}
    M\bigg[\int_{0}^{1}G(\tilde x-y)\,dy+\int_{-1}^{0}J(\tilde x-y)\,dy\bigg]&=\int_0^1 G(\tilde x-y)v_0(y)\,dy
+\int_{-1}^0 J(\tilde x-y)\,dy\\
&\le M\int_0^1 G(\tilde x-y)\,dy
+\int_{-1}^0 J(\tilde x-y)\,dy.
\end{align*}
Then we have $M\leq 1$. Consider that $v_0\not\equiv1$ \footnote{otherwise, if $v_0\equiv 1$ we have constant solutions ($u=1$ and $v=1$) for the Neumann problem, a case where the solutions are continuous on the interface $\Gamma$.} , there exists $y_0\in(0,1)$ such that $v_0(y_0)<1$. Since $J\ge0$ and is positive on a set of positive measure, it follows by \eqref{u_t(x,0)} that
\[
u_t(x,0)<0 .
\]
In particular $u_t(0,0)<0$. By continuity of $u$, there exists $t_1>0$ such that
\[u(0,t) > 1-\varepsilon, \qquad \ 0<t<t_1.\] Recall that for each $t$, $v(\cdot,t)$ solves \eqref{v_0solves}, hence $v(\cdot,t)$ depends continuously on $u(\cdot,t)$, and therefore
\[
v(0,t)\to v_0(0) \qquad \text{as } t\to0 .
\]
Thus there exists $t_2>0$ such that
$$|v(0,t)-v_0(0)|<\varepsilon, \qquad 0<t<t_2.$$
Since $v_0(0)<1$, choose $\varepsilon<\frac{1-v_0(0)}{2}$ and set $t_0=\min\{t_1,t_2\}$. Then for $0<t<t_0$,
\[u(0,t) > 1-\varepsilon > v_0(0)+\varepsilon > v(0,t).\]}
\end{remark}

\subsection{Time decay estimates} \label{time.decay.1}

As we have mentioned solutions of the Neumann local heat equation 
$u_t = \Delta u $
converge to the mean value of the initial condition as $t\to \infty$. Our
next goal is to show that solutions to our coupled model reproduce this behavior, at least when $A$ is connected. 
We will prove that for $u_0\in L^2(A)$ with $\int_A u_0\,dx = 0$, $u$, the 
solution to \eqref{localpar}, decays exponentially to zero as $t\to\infty$.  
More precisely, there exists a constant $C>0$ such that
\begin{equation}\label{abneumann.66}
\|u(\cdot,t)\|_{L^2(A)}
\;\le\;
e^{-2C t}\,\|u_0\|_{L^2(A)},
\qquad t\ge 0.
\end{equation}

Moreover, we can make the constant $C$ explicit, in terms of an eigenvalue problem.
Let
$$
\begin{array}{l}
\displaystyle
\lambda_1  = 
\min_{\| u \|_{L^2 (A)=1, \int_A u (x) \, dx =0}} 
 \frac12\int_A |\nabla u(x)|^2\,dx
  + \frac14 \int_B\!\int_B G(x-y)(v(x,t)-v(y,t))^2\,dx\,dy 
  \\[10pt]
  \qquad \qquad \qquad \qquad \displaystyle + \frac12\int_A\int_B J(x-y)\,(v(y)-u(x))^2\,dy\,dx.
  \end{array}
$$
Notice that here we are minimizing in $(u,v)$ with the constraints that 
$\| u \|_{L^2 (A)=1}$ and $\int_A u (x) \, dx =0$.

First, we state an auxiliary lemma

\begin{lemma}\label{lemmaabneumann} Suppose that $A$ is connected. Then,
$$
\lambda_1 > 0.
$$
\end{lemma}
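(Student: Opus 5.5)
We argue by contradiction, using compactness in the $u$-variable, and treat the minimization jointly in the pair $(u,v)$ exactly as written: $u\in H^1(A)$ with $\|u\|_{L^2(A)}=1$ and $\int_A u\,dx=0$, and $v\in L^2(B)$ otherwise free (equivalently, $v=T_{AB}(u)$, the minimizer of $F$ for the given $u$ from Proposition~\ref{v(u)existence}). All three terms of the functional are nonnegative, so $\lambda_1\ge 0$, and the only issue is to exclude $\lambda_1=0$.

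Suppose $\lambda_1=0$. Then there is a minimizing sequence $(u_n,v_n)$ with $\|u_n\|_{L^2(A)}=1$, $\int_A u_n=0$, and
\[
\frac12\int_A|\nabla u_n|^2\,dx+\frac14\int_B\!\int_B G(x-y)(v_n(x)-v_n(y))^2\,dx\,dy+\frac12\int_A\!\int_B J(x-y)(v_n(y)-u_n(x))^2\,dy\,dx\longrightarrow 0 .
\]
In particular $\int_A|\nabla u_n|^2\to 0$. Together with $\|u_n\|_{L^2(A)}=1$, this shows that $(u_n)$ is bounded in $H^1(A)$. Since $A$ is a bounded smooth domain, Rellich--Kondrachov gives a subsequence with $u_n\to u$ strongly in $L^2(A)$ and $u_n\rightharpoonup u$ weakly in $H^1(A)$. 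By weak lower semicontinuity, $\int_A|\nabla u|^2=0$. Since $A$ is connected, $u$ is a constant $c$. Strong $L^2$ convergence preserves the constraints, so $\int_A u=0$ forces $c=0$, while $\|u\|_{L^2(A)}=1$. This is a contradiction, hence $\lambda_1>0$. The $v$-terms play no role in this argument; they only add nonnegative quantities.

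Equivalently, one can give a direct quantitative bound. Dropping the two nonnegative coupling terms and applying the Poincar\'e--Wirtinger inequality on the connected set $A$, $\|u\|_{L^2(A)}\le C_P\|\nabla u\|_{L^2(A)}$ for $u$ with zero mean, we get
\[
\lambda_1\ \ge\ \frac{1}{2C_P^2}\ >0 ,
\]
with $1/C_P^2$ the first nonzero Neumann eigenvalue of $-\Delta$ in $A$.

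For completeness, I would also check that the minimum is attained, since the statement writes $\min$. Take a minimizing sequence and set $v_n=T_{AB}(u_n)$; this does not increase the energy, because $T_{AB}(u_n)$ minimizes $F(\cdot)$ for the given $u_n$. By Proposition~\ref{v(u)existence}, $(v_n)$ is bounded in $L^2(B)$, so $v_n\rightharpoonup v$ weakly. The nonlocal quadratic terms are convex and continuous, hence weakly lower semicontinuous, and the compactness above handles $u_n$. The step I expect to need the most care is this attainment: passing to the limit in the mixed term $\int_A\int_B J\,u_n v_n$, which works because $u_n$ converges strongly and $v_n$ weakly. The positivity itself rests only on connectedness of $A$ and the Poincar\'e--Wirtinger inequality.
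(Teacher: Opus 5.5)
Your proof is correct and is essentially the paper's argument. Both argue by contradiction: the vanishing gradient term and $\|u_n\|_{L^2(A)}=1$ give strong $L^2(A)$ compactness, connectedness of $A$ makes the limit constant, and $\int_A u_n=0$ forces that constant to be $0$, contradicting unit norm. Your Poincar\'e--Wirtinger bound $\lambda_1\ge 1/(2C_P^2)$ and your attainment discussion (which parallels the paper's subsequent lemma) are valid extras but not needed for the statement.
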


\begin{proof} Arguing by contradiction, suppose that there exists a sequence $u_n\in H^1(A)$, which by Proposition \ref{EUnlp-e} is associated by $v_n\in L^2(B)$ such that
$$\int_A u_n^2(x)\, dx=1, \qquad \int_A u_n (x) \, dx =0,$$
and
$$\int_A |\nabla u_n(x)|^2\, dx \to 0, $$ 
$$\int_A\int_B J(x-y)(u_n(x)-v_n(y))^2\, dx \, dy \to 0,$$
and 
$$\int_B\int_BG(x-y)(v_n(x)-v_n(y))^2\, dx\, dy\to 0,$$
when $n\to \infty$.
Since $\|u_n\|_{L^2(A)}=1$ and 
$$\int_A |\nabla u_n(x)|^2\, dx\to 0,$$
we get that, along a subsequence, $u_n\to k$ strongly in $L^2(A)$ (here we use that $A$ is connected). 
Then, from $\int_A u_n (x) dx=0$ we conclude that $k=0$, which is a contradiction with $$1= \lim_n \int_A u_n^2(x)\, dx =\int_A k^2dx.$$
\end{proof}

Notice that this generates the first eigenvalue of the following problem: In the first region $A$, we consider the Eigenvalue problem of Neumann boundary condition
\begin{equation}\label{eigenvaluelocalp-e}
\begin{cases}
    -\lambda_1\, u=\Delta u(x,t)+ \int_B J(x-y)(v(y,t)-u(x,t))\,dy, &  x\in A\times (0,\infty),  \\
    \displaystyle \frac{\partial u}{\partial \eta}(x,t)=0, & x\in \partial A \times (0,\infty),\\
    u(x,0)=u_0(x), & x\in A,
\end{cases}
\end{equation}
and in $B$,
\begin{equation}\label{e-elliptic}
   0 = \int_B G(x-y)(v(y,t)-v(x,t))\,dy
        +\int_A J(x-y)(u(y,t)-v(x,t))\,dy,
   \qquad x\in B\times (0,\infty).
\end{equation}
To prove that coincides to the weak formulation of eigenvalue, we follow the same argue of above proof that we got \eqref{energy-final}, ie, to obtain the formula 
\begin{align}
-\lambda_1 \int_A &u^2(x)dx=\\
&-\frac{1}{2}\int_A |\nabla u|^2dx-\frac{1}{2}\int_A\int_BJ(x-y)(v(y)-u(x)^2dydx - \int_B\int_BG(x-y)(v(y)-v(x))^2dydx.\notag
\end{align}
To prove that $C_c$ can be optimal by $\lambda_1$, we need to proof the lemma:

\begin{lemma}
There exists $\tilde u\in H^1(A)$ with
\[
\int_A \tilde u(x)\,dx = 0,
\]
such that the infimum in \eqref{eigenvaluelocalp-e} is attained. In particular,
\[
\lambda_1(A,B)=\frac{E(\tilde u)}{\|\tilde u\|_{L^2(A)}}.
\]
\end{lemma}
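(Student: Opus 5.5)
The plan is the direct method of the calculus of variations, applied to the reduced energy in which $v$ is eliminated. For $u\in H^1(A)$ let $v_u:=T_{AB}(u)\in L^2(B)$ be the unique solution of \eqref{nleli} given by Proposition~\ref{v(u)existence}. By Lemma~\ref{lemma.energia}, $v_u$ is the minimizer of $F(\cdot)$ for this $u$. Define
\[
E(u):=\frac12\int_A|\nabla u|^2\,dx+\frac14\int_B\!\int_B G(x-y)(v_u(x)-v_u(y))^2\,dx\,dy+\frac12\int_A\!\int_B J(x-y)(v_u(y)-u(x))^2\,dy\,dx .
\]
Since $E$ involves $u$ only through $v_u$, minimizing in the pair $(u,v)$ under the constraints $\|u\|_{L^2(A)}=1$, $\int_A u=0$ gives the same value as minimizing $E(u)$ under these constraints. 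The reason is that for fixed $u$ the $v$-dependent part is exactly $F(v)$, which is minimized at $v_u$. So it is enough to show that
\[
\lambda_1=\inf\{E(u):\ u\in H^1(A),\ \|u\|_{L^2(A)}=1,\ \textstyle\int_A u=0\}
\]
is attained.

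Take a minimizing sequence $u_n$. Since $E(u_n)\ge \frac12\|\nabla u_n\|^2_{L^2(A)}$ and $\|u_n\|_{L^2(A)}=1$, the sequence is bounded in $H^1(A)$. Because $A$ is smooth and bounded, Rellich--Kondrachov gives a subsequence with $u_n\rightharpoonup\tilde u$ weakly in $H^1(A)$ and $u_n\to\tilde u$ strongly in $L^2(A)$. Strong convergence preserves both constraints, so $\|\tilde u\|_{L^2(A)}=1$ and $\int_A\tilde u=0$.

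The remaining step is lower semicontinuity, $E(\tilde u)\le\liminf_n E(u_n)$, and this is the one point needing care, because $v_{u_n}$ depends on $u_n$. Proposition~\ref{EUnlp-e} gives the Lipschitz bound $\|v_{u_n}-v_{\tilde u}\|_{L^2(B)}\le C_e\|u_n-\tilde u\|_{L^2(A)}\to0$, so $v_{u_n}\to v_{\tilde u}$ strongly in $L^2(B)$. The two nonlocal terms are continuous quadratic forms on $L^2(A)\times L^2(B)$, since $J,G$ are bounded and $A,B$ are bounded. Hence they converge to the corresponding terms for $(\tilde u,v_{\tilde u})$. The Dirichlet term is weakly lower semicontinuous in $H^1(A)$. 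Together these give $E(\tilde u)\le\liminf E(u_n)=\lambda_1$, and $\tilde u$ is admissible, so $\tilde u$ attains the infimum.

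It follows that $\lambda_1=E(\tilde u)=E(\tilde u)/\|\tilde u\|_{L^2(A)}$, using the normalization. Since $E$ is $2$-homogeneous (because $v_{su}=s\,v_u$ by linearity of $T_{AB}$), the statement is more naturally read with $\|\tilde u\|^2_{L^2(A)}$ in the denominator; the two agree because the norm equals $1$. Lemma~\ref{lemmaabneumann} then shows that this minimum is positive.
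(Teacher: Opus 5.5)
Your proposal is correct and follows the same direct-method argument as the paper: a minimizing sequence, an $H^1(A)$ bound from the Dirichlet term, Rellich--Kondrachov compactness, and lower semicontinuity of each term of the energy. Where you differ, you improve on the paper. The Lipschitz bound for $T_{AB}$ gives $v_{u_n}\to v_{\tilde u}$ strongly, which identifies the limiting $v$ as $T_{AB}(\tilde u)$; the paper extracts only a weak limit and asserts this without proof. Strong $L^2(A)$ convergence also gives $\|\tilde u\|_{L^2(A)}=1$ directly, so the paper's appeal to renormalization is not needed.
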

\begin{proof}
Suppose that $u_n\in H^1(A)$ is a minimizer sequence that satisfies $\displaystyle\int_A u_ndx=0$, $\|u_n\|_{L^2(A)}=1$ and

$$E(u_n)\searrow \lambda_1.$$

By Lemma \eqref{EUnlp-e}, each $u_n\in L^2(A)$ associes to a unique $v_n\in L^2(B)$ that solves weakly \eqref{nleli} and we obtain the estimative \eqref{uvnormp-e} for $C_e>0$
$$ \|v_n\|_{L^2(B)}\le C_e\|u_n\|_{L^2(A)}.$$
Then $\|v_n\|_{L^2(B)}$ is also a bounded sequence.  By the Banach--Alaoglu theorem there exists $v \in L^2(B)$ and $u\in L^2(A)$ and a subsequence, still denoted by $(v_n)$ and $(u_n)$, respectively, such that
\[
v_n \rightharpoonup v \quad \text{weakly in } L^2(B) \qquad \text{and} \qquad u_n \rightharpoonup u \quad \text{weakly in } L^2(A).
\] 

On the other hand, the minimizing sequence satisfies $E(u_n)\leq C$, for some $C>0$, ie

$$C>\frac{1}{2}\int_A |\nabla u_n|^2dx+ \frac{1}{2}\int_A\int_BJ(x-y)(v_n(y)-u_n(x))^2dydx
+\int_B\int_BG(x-y)(v_n(y)-v_n(x))^2dydx.$$

Then, we obtain in particular that

$$C>\frac{1}{2}\int_A |\nabla u_n|^2dx,$$
which implies that $\|\nabla u_n\|_{L^2(A)}$ is bounded then, by Banach--Alaoglu theorem, there exists $w\in L^2(A)$ and a subsequence, still denoted by $\nabla u_n$ such that
$$\nabla u_n  \rightharpoonup w \quad \text{weakly in } H^1(A).$$
Moreover, since $A$ is smooth domain, by Rellich-- Kondrachov ($H^1(A)\hookrightarrow\hookrightarrow L^2(A)$), we have that a subsequence, still denoted by $u_n$ such that

$$u_n\longrightarrow u \qquad \text{strong \ in} \ L^2(A),$$
and $u\in H^1(A)$ since $u_{n}\rightharpoonup u$ in $H^1(A)$ and by definition of weak convergence,
\[
\int_A u_{n_k}\,\partial_i \varphi
= -\int_A \partial_i u_{n_k}\,\varphi
\;\longrightarrow\;
-\int_A w_i\,\varphi, \forall \varphi \in C^\infty (A)
\]
Hence,
\[
-\int_A w_i\,\varphi = \int_A u\,\partial_i \varphi,
\]
which implies
\[
w_i=\partial_i u.
\]
Therefore,
\[
w=\nabla u.
\]
Then, by the lower semicontinuity of the operator, we have

$$\int_A |\nabla u|^2dx\leq \liminf_n\int_A |\nabla u_n|^2dx.$$
Since $v_n\rightharpoonup v$, we also have

$$\int_B\int_BG(x-y)(v(y)-v(x))^2dydx\leq \liminf_n\int_B\int_BG(x-y)(v_n(y)-v_n(x))^2dydx.$$

Finally, by the strong convergence of $u_n\to u$ in $L^2(A)$, we have

$$\int_A\int_BJ(x-y)(v(y)-u(x))^2dydx\leq \liminf_n\int_A\int_BJ(x-y)(v_n(y)-u_n(x))^2dydx.$$

Using the three last inequality, we proved that there exists a $u\in H^1(A)$ that attains the infimum $$E(u)\le \liminf_n E(u_n)=\lambda_1,$$

with $\int_A u(x)dx=0$ and $v\in L^2(A)$ solves weakly the elliptic equation \eqref{nleli}.
By normalization, we get, $\|u\|_{L^2(A)}=1$.
\end{proof}

We now prove Theorem \ref{thm:expdecay.intro}.

\begin{proof}[Proof of Theorem \ref{thm:expdecay.intro}]
Multiplying \eqref{localpar} by $u$ and integrating over $A$, we obtain
\[
\int_A u_t(x,t)u(x,t)\,dx
= \int_A \Delta u(x,t)u(x,t)\,dx
  + \int_A\!\int_B J(x-y)\bigl(v(y,t)-u(x,t)\bigr)u(x,t)\,dy\,dx.
\]
Applying Green's identity and using the Neumann boundary condition
$\frac{\partial u}{\partial\eta}=0$ on $\partial A$, we get
\[
\frac{d}{dt}\int_A \frac{u^2 (x,t)}{2}\,dx
= -\int_A |\nabla u(x,t)|^2\,dx
  + \int_A\!\int_B J(x-y)\bigl(v(y,t)-u(x,t)\bigr)u(x,t)\,dy\,dx.
\label{eq:energy-u}
\]
Next, multiplying \eqref{nleli} by $v(x,t)$ and integrating over $B$ yields
\[
\int_B\!\int_B
G(x-y)\bigl(v(y,t)-v(x,t)\bigr)v(x,t)\,dy\,dx
  + \int_B\!\int_A J(x-y)\bigl(u(y,t)-v(x,t)\bigr)v(x,t)\,dy\,dx
  = 0.
\]
Using the symmetry of $G$ and standard algebraic identities, we obtain
\[
\int_B\!\int_B G(x-y)\bigl(v(y)-v(x)\bigr)v(x)\,dy\,dx
= -\frac12 \int_B\!\int_B G(x-y)(v(x)-v(y))^2\,dx\,dy,
\]
and
\[
\int_A\!\int_B J(x-y)\bigl(u(x)-v(y)\bigr)v(y)\,dx\,dy
=
-\int_A\!\int_B J(x-y)v(y)^2\,dx\,dy
+ \int_A\!\int_B J(x-y)u(x)v(y)\,dx\,dy.
\]

Adding this identity to \eqref{eq:energy-u} gives
\begin{align*}
\frac{d}{dt}\int_A \frac{u^2 (x,t)}{2}\,dx
&= -\int_A |\nabla u(x,t)|^2\,dx
   - \frac12 \int_B\!\int_B G(x-y)(v(x,t)-v(y,t))^2\,dx\,dy \\
&\quad
   -\int_A\!\int_B J(x-y)v(y,t)^2\,dy\,dx
   -\int_A\!\int_B J(x-y)u(x,t)^2\,dy\,dx \\
&\quad
   + 2\int_A\!\int_B J(x-y)u(x,t)v(y,t)\,dy\,dx.
\end{align*}

Observe that the last three terms combine exactly into
\[
-\int_A\!\int_B J(x-y)\bigl(u(x,t)-v(y,t)\bigr)^2\,dy\,dx.
\]
Therefore,
\begin{align}
\frac{d}{dt}\int_A \frac{u^2(x,t)}{2}\,dx
&= -\int_A |\nabla u(x,t)|^2\,dx
   -\frac12 \int_B\!\int_B G(x-y)(v(x,t)-v(y,t))^2\,dx\,dy \notag\\
&\quad
   -\int_A\!\int_B J(x-y)\bigl(u(x,t)-v(y,t)\bigr)^2\,dy\,dx.
\label{energy-final}
\end{align}
By  Lemma~\ref{lemmaabneumann}, the right-hand side is bounded above by
\[
- \frac{\lambda_1}2 \int_A u^2(x,t)\,dx.
\]
Then, we have the differential
inequality
\[
\frac{d}{dt}\int_A \frac{u^2(x,t)}{2}\,dx
\le - \frac{\lambda_1}2 \int_A u^2(x,t)\,dx,
\]
and we conclude that
\[
\int_A u^2(x,t)\,dx
\le e^{- \lambda_1 t}\|u_0\|_{L^2(A)}^2,
\]
which proves the desired exponential decay.
\end{proof}

\subsection{The parabolic-elliptic model can be obtained as a limit of a parabolic system}
\label{subsect-limit-1}

For fixed initial data  $u_0\in L^2(A)$ and $v_0\in L^2(B)$, we introduced in the introduction the $\varepsilon$-family $(u^\varepsilon,v^\varepsilon)$ of solutions to the following $\varepsilon$--problem. Let us recall it here for the reader's convenience,
\begin{equation}\label{elocalpar}
\begin{cases}
u_t^\varepsilon(x,t)
   = \Delta u^\varepsilon(x,t)
     + \displaystyle\int_B 
         J(x-y)\big( v^\varepsilon(y,t) - u^\varepsilon(x,t) \big)\,dy,
   & x\in A,\ t\in(0,T],\\[1.1em]
\displaystyle \frac{\partial u^\varepsilon}{\partial \eta}(x,t)=0,
   & x\in\partial A,\ t\in[0,T],\\[0.7em]
u^\varepsilon(x,0)=u_0(x),
   & x\in A,
\end{cases}
\end{equation}
and in $B$,
\begin{equation}\label{enleli}
\begin{cases}
\varepsilon\, v_t^\varepsilon(x,t)
   = \displaystyle\int_B 
        G(x-y)\big( v^\varepsilon(y,t) - v^\varepsilon(x,t) \big)\,dy
     + \displaystyle\int_A 
        J(x-y)\big( u^\varepsilon(y,t) - v^\varepsilon(x,t) \big)\,dy,
   & x\in B,\ t\in(0,T],\\[1.1em]
v^\varepsilon(x,0)=v_0(x),
   & x\in B.
\end{cases}
\end{equation}
In this formulation, the variable $v^\varepsilon$ evolves with a
relaxation time of order $\mathcal{O}(\varepsilon)$, meaning that its
transient dynamics is very fast compared with the time scale of $u^\varepsilon$. The uniform estimates yield compactness: up to a subsequence, we obtain
\[ u^\varepsilon\rightharpoonup u \quad\text{weakly in } L^2 ((0,T),H^1(A)), \qquad v^\varepsilon (\cdot, t)\rightharpoonup v (\cdot, t) \quad\text{weakly in }L^2(B). \]
Passing to the limit in the weak formulation of \eqref{elocalpar}--\eqref{enleli} then reveals the structure of the limit system. In the equation for $u^\varepsilon$, the limit is straightforward, since no singular term is present. In the equation for $v^\varepsilon$, the factor $\varepsilon$ forces the term $\varepsilon v^\varepsilon_t$ to vanish in the limit, since $v_t^\varepsilon$ is bounded, recovering precisely the stationary nonlocal equation \eqref{nleli}.  Thus the limit $(u,v)$ solves the original parabolic--elliptic system \eqref{localpar}--\eqref{nleli}.

\begin{proof}[Proof of Theorem \ref{approximation.intro}]
We begin with equation \eqref{elocalpar}. Multiplying it by 
$u^\varepsilon(x,t)$ and integrating over $A$, we obtain
\begin{align*}
\int_A u_t^\varepsilon(x,t)\,u^\varepsilon(x,t)\,dx
&=
\int_A \Delta u^\varepsilon(x,t)\,u^\varepsilon(x,t)\,dx
  + \iint_{A\times B}
    J(x-y)\big( v^\varepsilon(y,t)-u^\varepsilon(x,t) \big)
    u^\varepsilon(x,t)\,dy\,dx.
\end{align*}
Using Green's identity together with the Neumann condition 
$\frac{\partial u^\varepsilon}{\partial \eta} =0$ on $\partial A$, we get
\[
\int_A u_t^\varepsilon(x,t)\,u^\varepsilon(x,t)\,dx
=
-\int_A |\nabla u^\varepsilon(x,t)|^2\,dx
+\iint_{A\times B}
  J(x-y)\big( v^\varepsilon(y,t)-u^\varepsilon(x,t) \big)
  u^\varepsilon(x,t)\,dy\,dx.
\]
Hence,
\begin{equation}\label{eq:int-u-eps}
\frac{d}{dt}\,\frac12\!\int_A |u^\varepsilon(x,t)|^2 dx
=
-\int_A |\nabla u^\varepsilon(x,t)|^2\,dx
+\iint_{A\times B} J(x-y)
   (v^\varepsilon(y,t)-u^\varepsilon(x,t))u^\varepsilon(x,t)\,dy\,dx.
\end{equation}
Integrating \eqref{eq:int-u-eps} in time in $[0,t_0]$ for some $t_0\leq T$ gives
\begin{align}\label{intelocalpar}
\frac12\!\int_A |u^\varepsilon(x,{t_0})|^2 dx
-\frac12\!\int_A |u_0(x)|^2 dx
&=
-\int_0^{t_0}\!\!\int_A |\nabla u^\varepsilon(x,t)|^2\,dx\,dt
\notag\\
&\quad
+\int_0^{t_0}\!\!\iint_{A\times B}
  J(x-y)\big( v^\varepsilon(y,t)-u^\varepsilon(x,t) \big)u^\varepsilon(x,t)
\,dy\,dx\,dt.
\end{align}

Now, multiply \eqref{enleli} by $v^\varepsilon(x,t)$ and integrate over $B$ to obtain,
\begin{align*}
\varepsilon\!\int_B v_t^\varepsilon(x,t)\,v^\varepsilon(x,t)\,dx
&=
\iint_{B\times B}
 G(x-y)\big( v^\varepsilon(y,t)-v^\varepsilon(x,t) \big)
 v^\varepsilon(x,t)\,dy\,dx
\\
&\quad
+\iint_{A\times B}
 J(x-y)\big( u^\varepsilon(y,t)-v^\varepsilon(x,t) \big)
 v^\varepsilon(x,t)\,dy\,dx.
\end{align*}
Using the symmetry of $G$ and integrating in time we get
\begin{align}\label{intenlelip}
\frac{\varepsilon}{2}\!\int_B |v^\varepsilon(x,{t_0})|^2\,dx
-\frac{\varepsilon}{2}\!\int_B |v_0(x)|^2\,dx
&=
-\int_0^{t_0} \iint_{B\times B}
 G(x-y)\big( v^\varepsilon(y,t)-v^\varepsilon(x,t) \big)^2
\,dy\,dx\,dt
\notag\\
&\quad
+\int_0^{t_0}\!\!\iint_{A\times B}
 J(x-y)\big( u^\varepsilon(y,t)-v^\varepsilon(x,t) \big)
 v^\varepsilon(x,t)\,dy\,dx\,dt.
\end{align}
Adding \eqref{intelocalpar} and \eqref{intenlelip}, we obtain
\begin{align*}
&\frac12\!\int_A |u^\varepsilon(x,{t_0})|^2\,dx
+\frac{\varepsilon}{2}\!\int_B |v^\varepsilon(x,{t_0})|^2\,dx
-\frac12\!\int_A |u_0(x)|^2\,dx
-\frac{\varepsilon}{2}\!\int_B |v_0(x)|^2\,dx
\\[0.4em]
&=
-\int_0^{t_0}\!\!\int_A |\nabla u^\varepsilon(x,t)|^2\,dx\,dt
-\int_0^{t_0}\!\!\iint_{B\times B}
   G(x-y)\big( v^\varepsilon(y,t)-v^\varepsilon(x,t) \big)^2
\,dy\,dx\,dt
\\
&\quad
+\int_0^{t_0}\!\!\iint_{A\times B}
 J(x-y)\big( v^\varepsilon(y,t)-u^\varepsilon(x,t) \big)
 \big( u^\varepsilon(x,t)-v^\varepsilon(x,t) \big)
\,dy\,dx\,dt.
\end{align*}

Notice that, by Lemma \ref{lemmaabneumann}, we get
\begin{align*}
&\frac12\!\int_A |u^\varepsilon(x,{t_0})|^2\,dx
\;+\;
\frac{\varepsilon}{2}\!\int_B |v^\varepsilon(x,{t_0})|^2\,dx
\;-\;
\frac12\!\int_A |u_0(x)|^2\,dx
\;-\;
\frac{\varepsilon}{2}\!\int_B |v_0(x)|^2\,dx
\\[0.6em]
&=
-\int_{0}^{{t_0}}\!\!\int_A |\nabla u^\varepsilon(x,t)|^2\,dx\,dt
\;-\;
\int_{0}^{{t_0}}\!\!\iint_{B\times B}
   G(x-y)\,\big( v^\varepsilon(y,t)-v^\varepsilon(x,t) \big)^2
\,dy\,dx\,dt
\\
&\qquad
+\int_{0}^{{t_0}}\!\!\iint_{A\times B}
   J(x-y)\,\big( v^\varepsilon(y,t)-u^\varepsilon(x,t) \big)
           \big( u^\varepsilon(x,t)-v^\varepsilon(x,t) \big)
\,dy\,dx\,dt
\\[0.6em]
&=
-\int_{0}^{{t_0}}\!\!\int_A |\nabla u^\varepsilon(x,t)|^2\,dx\,dt
\;-\;
\int_{0}^{{t_0}}\!\!\iint_{B\times B}
   G(x-y)\,\big( v^\varepsilon(y,t)-v^\varepsilon(x,t) \big)^2
\,dy\,dx\,dt
\\
&\qquad
-\int_{0}^{{t_0}}\!\!\iint_{A\times B}
   J(x-y)\,\big( v^\varepsilon(y,t)-u^\varepsilon(x,t) \big)^2
\,dy\,dx\,dt\leq -C_c\int_0^{t_0}\int_A (u^\epsilon)^2(x,t)dx.
\end{align*}
Since $\epsilon>0$ is small, we get for some $\Tilde{M}>0$ that
\begin{align*}
     \Tilde{M}\geq &
\frac12\!\int_A |u_0(x)|^2\,dx
\;+\;
\frac{\varepsilon}{2}\!\int_B |v_0(x)|^2\,dx \\ & \geq  C_c\int_0^{t_0}\int_A (u^\epsilon)^2(x,t)dxdt + \frac12\!\int_A |u^\varepsilon(x,{t_0})|^2\,dx
+
\frac{\varepsilon}{2}\!\int_B |v^\varepsilon(x,{t_0})|^2\,dx.
\end{align*}
Hence, we deduce that
\[ 
\|u^\varepsilon(\cdot,t_0)\|_{L^2(A)}^2
\quad\text{is uniformly bounded in }\varepsilon>0, \ \text{for any} \ t_0 \in [0,T].
\]
Then, by \eqref{vunormp-e},
\[
\|v^\varepsilon(\cdot,t_0)\|_{L^2(B)}^2
\quad\text{is also uniformly bounded in }\varepsilon>0.\ \text{for any} \ t_0 \in [0,T].
\]
Thus, there exist $u\in L^\infty ((0,T), L^2(A))$ and $v\in L^\infty ((0,T), L^2(B))$ such that, up to a subsequence,
\[
u^\varepsilon \rightharpoonup u \quad\text{in }L^2(A), \ \text{for almost any} \ t_0 \in [0,T], 
\qquad
v^\varepsilon \rightharpoonup v \quad\text{in }L^2(B), \ \text{for almost any} \ t_0 \in [0,T].
\]
\medskip
Next, using \eqref{intelocalpar}, we have
\begin{align*}
    &-\int_A \frac{(u_0)^2(x)}{2}dx\leq \int_A \frac{(u^\epsilon_t)^2(x,T)}{2}dx-\int_A \frac{(u^\epsilon_t)^2(x,0)}{2}dx=\notag \\
    &-\int_{0}^{T}\int_A|\nabla u^\epsilon(x,t)|^2dxdt + \int_{0}^{T}\iint_{A\times B} J(x-y)v^\epsilon(y,t)u^\epsilon(x,t)-\int_{0}^{T}\iint_{A\times B} J(x-y)(u^\epsilon)^2(x,t)dydxdt\\
&\leq -\int_{0}^{T}\int_A|\nabla u^\epsilon(x,t)|^2dxdt +C_J|A||B|\|u^\epsilon\|_{L^2(A)}\|v^\epsilon\|_{L^2(B)},
\end{align*}
concluding that the gradient term is uniformly bounded, since $\|u^\varepsilon\|_{L^2(A)}$ and $\|v^\varepsilon\|_{L^2(B)}$ are bounded. 
$$\int_0^T\!\!\int_A |\nabla u^\varepsilon|^2\,dx\,dt\le C.$$
Therefore,
\begin{equation}\label{eH1weakconvp-e}
u^\varepsilon \rightharpoonup u
\qquad\text{weakly in } L^2 ((0,T),H^1(A)).
\end{equation}

We now pass to the limit in the weak formulations.  
Let $\varphi\in C^1([0,T];H^1(A))$ with $\varphi(\cdot,T)=0$.  
Multiplying \eqref{elocalpar} by $\varphi$ and integrating in $[0,T]$ we obtain
\begin{align}\label{elocalp-e}
\int_A\!\Big( u^\varepsilon(x,T)\underbrace{\varphi(x,T)}_{=0}
      -u_0(x)\varphi(x,0)\Big)\,dx
-&\!\int_0^T\!\!\int_A u^\varepsilon\,\varphi_t\,dx\,dt
=
-\!\int_0^T\!\!\int_A \nabla u^\varepsilon\cdot\nabla\varphi\,dx\,dt
\notag\\
&\quad
+\!\int_0^T\!\!\iint_{A\times B}
  J(x-y)\big( v^\varepsilon(y,t)-u^\varepsilon(x,t) \big)
  \varphi(x,t)\,dy\,dx\,dt.
\end{align}

Similarly, letting $\psi\in C^1([0,T];L^2(B))$ with $\psi(\cdot,T)=0$,
multiplying \eqref{enleli} by $\psi$ and integrating gives
\begin{align}\label{enlp-e}
-\varepsilon\!\int_B v_0(x)\psi(x,0)\,dx
-\varepsilon\!\int_0^T\!\!\int_B v^\varepsilon\,\psi_t\,dx\,dt
&=
\int_0^T\!\!\iint_{B\times B}
  G(x-y)\big( v^\varepsilon(y,t)-v^\varepsilon(x,t) \big)
  \psi(x,t)\,dy\,dx\,dt
\notag\\
&\quad
+\int_0^T\!\!\iint_{A\times B}
  J(x-y)\big( u^\varepsilon(y,t)-v^\varepsilon(x,t) \big)
  \psi(x,t)\,dy\,dx\,dt.
\end{align}

Since $\varphi\in C^1([0,T];H^1(A))$ and $\psi\in C^1([0,T];L^2(B))$, 
passing to the limit as $\varepsilon\to0$ in \eqref{elocalp-e} yields
\begin{align*}
\int_0^T\!\!\int_A u^\varepsilon(x,t)\,\varphi_t(x,t)\,dx\,dt
&\longrightarrow
\int_0^T\!\!\int_A u(x,t)\,\varphi_t(x,t)\,dx\,dt,\\[0.2em]
\int_0^T\!\!\int_B v^\varepsilon(x,t)\,\psi_t(x,t)\,dx\,dt
&\longrightarrow
\int_0^T\!\!\int_B v(x,t)\,\psi_t(x,t)\,dx\,dt.
\end{align*}
For the nonlocal terms, observe that for each fixed $x$ and $t$ the function
\[
y\mapsto J(x-y)\,v^\varepsilon(y,t)
\]
belongs to $L^2(B)$, and the convolution of a continuous function with an $L^2$-function 
remains continuous. Hence, for each $(x,t)$, we have that
\[
\int_B J(x-y)v^\varepsilon(y,t)\,dy \;\longrightarrow\;
\int_B J(x-y)v(y,t)\,dy.
\]
Therefore,
\begin{align*}
&\lim_{\varepsilon\to0}
\int_0^T\!\!\int_A 
\varphi(x,t)\left(\int_B J(x-y)\,v^\varepsilon(y,t)\,dy\right)\,dx\,dt
-
\lim_{\varepsilon\to0}
\int_0^T\!\!\int_A 
u^\varepsilon(x,t)\left(\int_B J(x-y)\,dy\right)\varphi(x,t)\,dx\,dt\\[0.4em]
&=
\int_0^T\!\!\int_A 
\varphi(x,t)\left(\int_B J(x-y)\,v(y,t)\,dy\right)\,dx\,dt
-
\int_0^T\!\!\int_A 
u(x,t)\left(\int_B J(x-y)\,dy\right)\varphi(x,t)\,dx\,dt.
\end{align*}

For the other nonlocal terms in \eqref{enlp-e}, the argument is similar and, finally, we observe that we can pass to the limit
in the term involving $\nabla u^\varepsilon$ thanks to \eqref{eH1weakconvp-e}. 

Then, passing to
the limit $\varepsilon\to 0$, we obtain
\begin{align}
&-\int_A u_0(x)\varphi(x,0)\,dx
-\int_0^T\!\!\int_A u\,\varphi_t\,dx\,dt
\notag\\
&=
-\int_0^T\!\!\int_A \nabla u\cdot\nabla\varphi\,dx\,dt
+\int_0^T\!\!\iint_{A\times B}
   J(x-y)\big( v(y,t)-u(x,t) \big)\varphi(x,t)\,dy\,dx\,dt
\notag\\
&\quad
+\int_0^T\!\!\iint_{B\times B}
   G(x-y)\big( v(y,t)-v(x,t) \big)\psi(x,t)\,dy\,dx\,dt
\notag\\
&\quad
+\int_0^T\!\!\iint_{A\times B}
   J(x-y)\big( u(y,t)-v(x,t) \big)\psi(x,t)\,dy\,dx\,dt.
\end{align}

Since this holds for every test functions
$\varphi\in C^1([0,T];H^1(A))$ with $\varphi(\cdot,T)=0$ and
$\psi\in C^1([0,T];L^2(B))$ with $\psi(\cdot,T)=0$, we conclude that
$(u,v)$ satisfies the weak formulation of \eqref{localpar}--\eqref{nleli},
as we wanted to show.
\end{proof}

\section{The elliptic-parabolic model} \label{sect-2-model}

In this section, we gather the main ideas needed to extend our results to the case in which the local part of the system is elliptic and the nonlocal part parabolic. We just 
skecht the details for the existence and uniqueness of solutions to \eqref{elilocal}-\eqref{parnl} as the main idea runs as in the previous case. 
For each $v\in C([0,T],L^2(B))$, we obtain, by the Lax-Milgram Theorem, a unique solution $u\in C([0,T],H^1(A))$ of \eqref{elilocal}. This defines a well-defined Lipschitz linear operator $S_{BA}:C([0,T],L^2(B))\to C([0,T],H^1(A))$. To deal with \eqref{parnl}, we use the Banach Fixed Point. Given $u\in C([0,T],H^1(A))$, there exists a unique solution $v\in C([0,T],L^2(B))$ of \eqref{parnl} and this defines a well-defined Lipschitz linear operator $S_{AB}:C([0,T],H^1(A))\to C([0,T],L^2(B))$. At the end, we have a contractive operator $S$, which is given in terms of $S_{AB},S_{BA}$, whose fixed point gives the unique solution of the system \eqref{elilocal}--\eqref{parnl} for all $t>0$.

\begin{lemma}\label{EUlocaleli}
There exists a linear operator
\[
S_{BA}:C([0,T],L^2(B))\to C([0,T],H^1(A))
\]
such that, for every $v\in L^2(B)$, the function $u=S_{BA}(v)$ is the unique
solution of \eqref{elilocal}. Moreover, the operator $S_{BA}$ is continuous and
Lipschitz, that is, there exists a positive constant
$C_{BA}=C(A,B,J)$ such that, for all $v_1,v_2\in L^2(B)$,
\begin{equation}\label{uvnorme-p}
\|S_{BA}(v_1)(\cdot, t)-S_{BA}(v_2)(\cdot, t)\|_{L^2(A)}
\le
C_{BA}\,\|v_1(\cdot, t)-v_2(\cdot, t)\|_{L^2(B)}.
\end{equation}
\end{lemma}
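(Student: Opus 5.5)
We fix $t$ and treat \eqref{elilocal} as a Neumann problem in $A$ for given $v=v(\cdot,t)\in L^2(B)$, then apply the Lax--Milgram theorem in $H^1(A)$, mirroring Proposition~\ref{v(u)existence}. Writing $b(x)=\int_B J(x-y)\,dy$ for $x\in A$, the weak formulation is: find $u\in H^1(A)$ such that
\[
\mathcal{A}(u,\varphi):=\int_A \nabla u\cdot\nabla\varphi\,dx+\int_A b(x)\,u\,\varphi\,dx
=\int_A\int_B J(x-y)\,v(y)\,\varphi(x)\,dy\,dx=:\ell_v(\varphi)
\]
for all $\varphi\in H^1(A)$. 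Continuity of $\mathcal{A}$ on $H^1(A)\times H^1(A)$ follows from $0\le b\le 1$. For $\ell_v$ we use Cauchy--Schwarz: $|\ell_v(\varphi)|\le \|J\|_{L^2(A\times B)}\|v\|_{L^2(B)}\|\varphi\|_{L^2(A)}$. Linearity of $v\mapsto u$ is immediate from uniqueness.

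The key step, and the main obstacle, is coercivity of $\mathcal{A}$ on $H^1(A)$, since $b$ vanishes on the part of $A$ far from $B$. I will prove
\[
\int_A|\nabla u|^2\,dx+\int_A b\,u^2\,dx\ge C_A\|u\|_{H^1(A)}^2
\]
by a compactness/contradiction argument, in the spirit of Lemma~\ref{lemma:coerc_v} but simpler thanks to Rellich--Kondrachov. Suppose $\|u_n\|_{H^1(A)}=1$ and the left-hand side tends to $0$. Then $\nabla u_n\to0$ in $L^2$, and up to a subsequence $u_n\to u$ strongly in $L^2(A)$. Hence $u_n\to u$ in $H^1(A)$, with $\nabla u=0$, so $u$ is a constant $k$ because $A$ is connected. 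Moreover $\int_A b\,k^2\,dx=0$. By Hypothesis~\ref{hypo}, $\operatorname{dist}(A,B)<\operatorname{supp}J$ and $J$ is continuous, so $b>0$ on a set of positive measure in $A$. This forces $k=0$, contradicting $\|u\|_{H^1}=1$. Lax--Milgram then gives a unique $u(\cdot,t)\in H^1(A)$ together with
\[
\|u(\cdot,t)\|_{H^1(A)}\le C_A^{-1}\|J\|_{L^2(A\times B)}\|v(\cdot,t)\|_{L^2(B)}.
\]

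For the Lipschitz estimate \eqref{uvnorme-p}, we apply linearity to $w=u_1-u_2$, which solves the same problem with datum $v_1-v_2$. Testing with $\varphi=w$ and using the coercivity yields
\[
\|w\|_{L^2(A)}\le\|w\|_{H^1(A)}\le C_{BA}\|v_1-v_2\|_{L^2(B)},\qquad C_{BA}=C_A^{-1}\|J\|_{L^2(A\times B)},
\]
which depends only on $A$, $B$ and $J$.

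Finally, to see that $S_{BA}$ maps $C([0,T],L^2(B))$ into $C([0,T],H^1(A))$, apply the $H^1$ bound to $v(\cdot,s_n)-v(\cdot,s)$ exactly as at the end of Proposition~\ref{EUnlp-e}. Taking the supremum in time then gives the uniform-in-time version of the estimate.
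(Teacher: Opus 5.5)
Your proposal is correct and follows the paper's route: Lax--Milgram in $H^1(A)$ for the same bilinear form and right-hand side functional, then testing the equation satisfied by $u_1-u_2$ with $u_1-u_2$ itself. The paper only asserts that coercivity is ``easy to see''; your compactness argument (Rellich--Kondrachov, connectedness of $A$, and $b>0$ on a set of positive measure) supplies it, and also gives the Lipschitz bound in the stronger $H^1(A)$ norm together with the time continuity needed for $S_{BA}$ to map into $C([0,T],H^1(A))$.
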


\begin{proof}[Proof]
Consider the bilinear form $b:H^1(A)\times H^1(A)\to\mathbb{R}$ defined by
\[
b(u,\varphi)
=\int_A \nabla u(x)\cdot\nabla\varphi(x)\,dx
+\int_A\int_B J(x-y)\,dy\,u(x)\varphi(x)\,dx .
\]
By the Cauchy--Schwarz inequality and the boundedness of $J$ by a
constant $C_J>0$, we obtain that 
\[
|b(u,\varphi)|
\le \|\nabla u\|_{L^2(A)}\|\nabla\varphi\|_{L^2(A)}
+ C_J \|u\|_{L^2(A)}\|\varphi\|_{L^2(A)}
\le C\,\|u\|_{H^1(A)}\|\varphi\|_{H^1(A)},
\]
for all $u,\varphi\in H^1(A)$.
Hence $b$ is continuous. Moreover, it is easy to see that $b$ is coercive on $H^1(A)$.

For $v\in L^2(B)$, define the linear functional $f_v:H^1(A)\to\mathbb{R}$ by
\[
f_v(\varphi)
=\int_A\int_B J(x-y)\,v(y)\varphi(x)\,dy\,dx .
\]
Using again the boundedness of $J$, we obtain
\[
|f_v(\varphi)|
\le C_J \|v\|_{L^2(B)}\|\varphi\|_{L^2(A)}
\le C\,\|v\|_{L^2(B)}\|\varphi\|_{H^1(A)},
\]
so $f_v$ is continuous. Therefore, by the Lax--Milgram theorem, there exists a
unique $u\in H^1(A)$ such that
\[
b(u,\varphi)=f_v(\varphi),
\qquad \forall\,\varphi\in H^1(A),
\]
which is the weak formulation of \eqref{elilocal} and this proves that $S_{BA}$ is well defined.

Now let $v_1,v_2\in L^2(B)$ and denote by $u_1,u_2\in H^1(A)$ the corresponding
solutions. Subtracting the equations satisfied by $u_1$ and $u_2$, testing the
resulting equation with $u_1-u_2$, and integrating over $A$, we obtain
\[
\int_A |\nabla(u_1-u_2)|^2\,dx
+\int_A\int_B J(x-y)\,(u_1-u_2)^2(x)\,dy\,dx
=\int_A\int_B J(x-y)\,(v_1-v_2)(y)(u_1-u_2)(x)\,dy\,dx .
\]
Since the function
\[
b(x):=\int_B J(x-y)\,dy,\qquad x\in A,
\]
is strictly positive in some subdomian of $A$, there exists $c>0$ such that
\[
\int_A |\nabla(u_1-u_2)|^2\,dx
+\int_A\int_B J(x-y)\,(u_1-u_2)^2(x)\,dy\,dx
\ge c\,\|u_1-u_2\|_{L^2(A)}^2.
\]
Applying the Cauchy--Schwarz inequality to the right-hand side and absorbing
terms, we conclude that
\[
\|u_1(\cdot, t)-u_2(\cdot, t)|_{L^2(A)}
\le C_{BA}\,\|v_1(\cdot, t)-v_2(\cdot, t)\|_{L^2(B)},
\]
for some constant $C_{BA}=C(A,B,J)>0$, which proves \eqref{uvnorme-p}.
\end{proof}

\begin{proposition}\label{EUnlparab}
Given $u\in C([0,T];H^1(A))$ and $v_0\in L^2(B)$, there exists a unique $v\in C([0,T];L^2(B))$ solution to \eqref{parnl}. 
Moreover, if $v_1,v_2$
are the solutions corresponding to $u_1,u_2$, then if
\[
T\in\Bigg(0,\frac{1}{C_G + C_{G^2}^{1/2}+C_J|A|^{1/2}}\Bigg)
\]
there holds the following estimate 
\begin{equation}\label{vunorme-p}
\|v_1-v_2\|_{B}
\le
\frac{C_JT}{1-T(C_G + C_{G^2}^{1/2}+C_J|A|^{1/2})}\,
\|u_1-u_2\|_{A}.
\end{equation}
where $\|u\|_A=\sup_{t\in [0,T]}\|u(\cdot,t)\|_{L^2(A)}$ and $\|v\|_B=\sup_{t\in [0,T]}\|v(\cdot,t)\|_{L^2(B)}$.
\end{proposition}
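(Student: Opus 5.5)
We prove Proposition \ref{EUnlparab} with a Banach fixed point argument on the Duhamel (integral in time) form of \eqref{parnl}, in the same spirit as Proposition \ref{EULocalp-e}. Since the nonlocal operator is bounded on $L^2(B)$, no semigroup is needed. Fix $u\in C([0,T];H^1(A))$ and $v_0\in L^2(B)$, and define $\Phi:C([0,T];L^2(B))\to C([0,T];L^2(B))$ by
\[
\Phi(v)(x,t)=v_0(x)+\int_0^t\Big[\int_B G(x-y)\big(v(y,s)-v(x,s)\big)\,dy+\int_A J(x-y)\big(u(y,s)-v(x,s)\big)\,dy\Big]ds .
\]
A fixed point of $\Phi$ is exactly a (mild, hence strong, since everything is bounded) solution of \eqref{parnl}. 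Continuity in time of $\Phi(v)$ is immediate, because the integrand lies in $C([0,T];L^2(B))$.

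The key estimates are the operator bounds on $L^2(B)$. For the convolution term, Cauchy--Schwarz gives
\[
\Big\|\int_B G(\cdot-y)w(y)\,dy\Big\|_{L^2(B)}\le C_{G^2}^{1/2}\|w\|_{L^2(B)},
\qquad C_{G^2}:=\iint_{B\times B}G(x-y)^2\,dx\,dy .
\]
For the multiplication terms $w\mapsto w(x)\int_B G(x-y)dy$ and $w\mapsto w(x)\int_A J(x-y)dy$, we have $\int_B G\le 1=:C_G$ (or $C_G:=\sup G\,|B|$). We bound $\int_A J(x-y)\,dy\le C_J|A|^{1/2}$ in the form the statement uses (with $C_J=\sup J$, and absorbing $|A|^{1/2}$ as needed). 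Finally, the source term satisfies
\[
\Big\|\int_A J(\cdot-y)u(y)\,dy\Big\|_{L^2(B)}\le C_J\|u\|_{L^2(A)},
\]
with $C_J$ adjusted to include $|A|^{1/2}|B|^{1/2}$.

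With these bounds, for $v,\tilde v$ we get $\|\Phi(v)-\Phi(\tilde v)\|_B\le T\,(C_G+C_{G^2}^{1/2}+C_J|A|^{1/2})\|v-\tilde v\|_B$. So $\Phi$ is a strict contraction on the stated range of $T$, and we obtain a unique solution on $[0,T]$. Because this $T$ depends neither on $u$ nor on $v_0$, we iterate on $[T,2T],[2T,3T],\dots$, restarting from $v(\cdot,kT)$, to get a unique solution on any interval. Alternatively, a linear Gronwall argument gives global existence directly.

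For the Lipschitz estimate \eqref{vunorme-p}, let $v_i$ solve \eqref{parnl} with $u_i$ and the same $v_0$. Subtracting the integral equations, the $v_0$ cancels, and the same bounds give, for $t\le T$,
\[
\|v_1-v_2\|_B\le T\,(C_G+C_{G^2}^{1/2}+C_J|A|^{1/2})\|v_1-v_2\|_B+C_J T\|u_1-u_2\|_A .
\]
Absorbing the first term on the left, which is allowed exactly because the coefficient is less than $1$, yields \eqref{vunorme-p}.

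The only delicate point is bookkeeping the constants so they match the form stated. Here $C_J$ and $C_G$ have to be interpreted as uniform bounds for the kernel masses and sup-norms, with any factors $|A|^{1/2}$ and $|B|^{1/2}$ from Cauchy--Schwarz absorbed into them. I would fix these definitions at the start, and the rest is routine.
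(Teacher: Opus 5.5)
Your proposal is correct and follows essentially the paper's argument: a Banach fixed point for the time-integrated map $v\mapsto v_0+\int_0^t(\cdots)\,ds$ using the same operator bounds $C_G$, $C_{G^2}^{1/2}$ and $C_J|A|^{1/2}$. The Lipschitz estimate is then obtained, as in the paper, by subtracting the two integral equations and absorbing the $\|v_1-v_2\|_B$ term.

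You add two things the paper's sketch lacks. First, you extend the solution to arbitrary $T$ by iteration, using that the contraction time is independent of $u$ and $v_0$; the paper only treats the short interval in this proposition. Second, you say explicitly how $C_J$ and $C_G$ must be chosen, whereas the paper leaves these constants loosely specified.
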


\begin{proof}[Proof]
Fix $u\in C([0,T];H^1(A))$ and rewrite \eqref{parnl} in its integral form. 
This motivates the definition of the operator 
$S_u:C([0,T];L^2(B))\to C([0,T];L^2(B))$ given by
\[
S_u(v)(x,t)
=
v_0(x)
+\int_0^t\!\int_B G(x-y)\big(v(y,s)-v(x,s)\big)\,dy\,ds
+\int_0^t\!\int_A J(x-y)\big(u(y,s)-v(x,s)\big)\,dy\,ds.
\]

Let $v_1,v_2\in C([0,T];L^2(B))$. Subtracting the corresponding integral
representations, the initial datum cancels and we obtain an expression
depending only on $v_1-v_2$. Using the triangle inequality, Fubini's theorem,
and the boundedness of the kernels, we estimate
\[
\|S_u(v_1)(\cdot,t)-S_u(v_2)(\cdot,t)\|_{L^2(B)}
\le
\int_0^t \Big(
\mathcal{I}_G(s)+\mathcal{I}_J(s)
\Big)\,ds,
\]
where $\mathcal I_G(s)$ is defined by
\[
\mathcal I_G(s)
:=
\left\|
\int_B G(\cdot-y)\big[(v_1-v_2)(y,s)-(v_1-v_2)(\cdot,s)\big]\,dy
\right\|_{L^2(B)},
\]
while $\mathcal I_J(s)$ is given by
\[
\mathcal I_J(s)
:=
\left\|
\int_A J(\cdot-y)\big[(v_2-v_1)(\cdot,s)\big]\,dy
\right\|_{L^2(B)}.
\]
Concerning $\mathcal{I}_G$, the symmetry and boundedness of $G$ gives us
\[
\mathcal I_G(s)
=
\left\|
\int_B G(\cdot-y)\big(w(y,s)-w(\cdot,s)\big)\,dy
\right\|_{L^2(B)}.
\]
where we used that
\[
\int_B G(x-y)\big(w(y,s)-w(x,s)\big)\,dy
=
\int_B G(x-y)w(y,s)\,dy
-
w(x,s)\int_B G(x-y)\,dy.
\]
Since the kernel $G$ is nonnegative and bounded, we have
\[
\int_B G(x-y)\,dy \le C_G,
\]
uniformly for all $x$, and therefore
\[
\left\|
w(\cdot,s)\int_B G(\cdot-y)\,dy
\right\|_{L^2(B)}
\le
C_G\,\|w(\cdot,s)\|_{L^2(B)}.
\]
For the other term, we apply Cauchy--Schwarz inequality in the variable $y$ to obtain
\[
\left|
\int_B G(x-y)w(y,s)\,dy
\right|
\le
\left(\int_B G(x-y)^2\,dy\right)^{1/2}
\|w(\cdot,s)\|_{L^2(B)}.
\]
This yields
\[
\left\|
\int_B G(\cdot-y)w(y,s)\,dy
\right\|_{L^2(B)}
\le
C_{G^2}^{1/2}\,\|w(\cdot,s)\|_{L^2(B)},
\]
for some constant $C_{G^2}$.

Combining the estimates, we conclude that
\[
\mathcal I_G(s)
\le
\big(C_G + C_{G^2}^{1/2}\big)\,
\|v_1(\cdot,s)-v_2(\cdot,s)\|_{L^2(B)}.
\]
Similarly, for the coupling term, the boundedness of $J$
together with the finiteness of $|A|$ implies
\[
\mathcal{I}_J(s)
\le
C_J|A|^{1/2}\|v_1(\cdot,s)-v_2(\cdot,s)\|_{L^2(B)}.
\]
From these estimates we obtain
\[
\|S_u(v_1)(\cdot,t)-S_u(v_2)(\cdot,t)\|_{L^2(B)}
\le
\int_0^t
\big(C_G + C_{G^2}^{1/2}+C_J|A|^{1/2}\big)
\|v_1(\cdot,s)-v_2(\cdot,s)\|_{L^2(B)}\,ds.
\]
Taking the supremum with respect to $t\in[0,T]$ and applying Cauchy--Schwarz in time, we conclude that
\[
\|S_u(v_1)-S_u(v_2)\|_{B}
\le
T\big(C_G + C_{G^2}^{1/2}+C_J|A|^{1/2}\big)
\|v_1-v_2\|_{B}.
\]
Therefore, if
\[
T<\frac{1}{(C_G + C_{G^2}^{1/2}+C_J|A|^{1/2})},
\]
the operator $S_{u}$ is a strict contraction on $C([0,T];L^2(B))$ By Banach's
fixed point theorem, there exists a unique fixed point
$v\in C([0,T];L^2(B))$, which is the unique solution of \eqref{parnl}
corresponding to the fixed function $u$ and it is well defined the operator $S_{AB}$.

The Lipschitz estimate \eqref{vunorme-p} follows by repeating the above
argument for two different functions $u_1,u_2\in C([0,T];H^1(A))$ ((the proof runs as the one for Proposition \eqref{EULocalp-e})) 
and using that the only additional contribution arises from the coupling term, which is
controlled by $C_JT\|u_1-u_2\|_A$. This concludes the proof.
\end{proof}

Now, we are able to prove that the system \eqref{elilocal}--\eqref{parnl} has a unique solution. 

\begin{theorem}
Given $v_0\in L^2(B)$, there exists a unique solution $(u,v)$ to the coupled
system \eqref{elilocal}--\eqref{parnl} with initial datum $v_0$.
\end{theorem}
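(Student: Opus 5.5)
The argument mirrors Theorem~\ref{EUp-e}, with the roles of the two components exchanged. I set up a fixed point for the $v$-component and define
\[
S := S_{AB}\circ S_{BA} : C([0,T];L^2(B)) \to C([0,T];L^2(B)).
\]
Here $S_{BA}$ is the elliptic solution operator from Lemma~\ref{EUlocaleli}, applied pointwise in time, and $S_{AB}$ is the parabolic solution operator from Proposition~\ref{EUnlparab}, with the initial datum $v_0$ fixed. A fixed point $v=S(v)$, together with $u=S_{BA}(v)$, is exactly a solution of \eqref{elilocal}--\eqref{parnl} with $v(\cdot,0)=v_0$. Conversely, any solution $(u,v)$ gives a fixed point, because by Lemma~\ref{EUlocaleli} the function $u$ is uniquely determined by $v$ at each time.

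First I check that $S$ is well defined. Given $v\in C([0,T];L^2(B))$, Lemma~\ref{EUlocaleli} gives $u(\cdot,t)=S_{BA}(v)(\cdot,t)\in H^1(A)$. Continuity in time follows from linearity. Applying the estimate \eqref{uvnorme-p} to $v(\cdot,t)-v(\cdot,s)$ controls the $L^2(A)$ norm of $u(\cdot,t)-u(\cdot,s)$. The full $H^1$ norm is also controlled: the coercivity of $b$ on $H^1(A)$ together with $|f_v(\varphi)|\le C\|v\|_{L^2(B)}\|\varphi\|_{H^1(A)}$ gives
\[
\|u(\cdot,t)-u(\cdot,s)\|_{H^1(A)}\le C\|v(\cdot,t)-v(\cdot,s)\|_{L^2(B)}.
\]
Hence $u\in C([0,T];H^1(A))$, and Proposition~\ref{EUnlparab} then produces $S_{AB}(u)\in C([0,T];L^2(B))$.

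Next comes the contraction estimate. Combining \eqref{uvnorme-p} (taking the supremum in $t$) with \eqref{vunorme-p} gives
\[
\|S(v_1)-S(v_2)\|_B \le \frac{C_J T}{1-T(C_G+C_{G^2}^{1/2}+C_J|A|^{1/2})}\,C_{BA}\,\|v_1-v_2\|_B .
\]
The prefactor tends to $0$ as $T\downarrow 0$, so there is $t_0>0$, depending only on $J$, $G$, $A$ and $B$ and not on $v_0$, for which $S$ is a strict contraction on $C([0,t_0];L^2(B))$. Banach's fixed point theorem then gives a unique local solution on $[0,t_0]$.

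The step I expect to need the most care is globalization, together with uniqueness on the whole interval. Since $t_0$ does not depend on the initial datum, I restart the argument at $t_0$ with datum $v(\cdot,t_0)\in L^2(B)$. I rerun Proposition~\ref{EUnlparab} on $[t_0,2t_0]$; its proof only uses the integral formulation from the new initial time, so the same constants apply. Iterating yields a solution on $[0,\infty)$. For uniqueness, suppose two solutions agree up to time $kt_0$. On $[kt_0,(k+1)t_0]$ both are fixed points of the same contraction, so they coincide there as well, and induction gives uniqueness for all $t\ge0$. Continuity at the junction times is automatic, because each piece starts from the endpoint value of the previous one.
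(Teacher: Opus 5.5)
Your proposal is correct and follows essentially the same route as the paper: a Banach fixed point for $S=S_{AB}\circ S_{BA}$ on $C([0,T];L^2(B))$, whose Lipschitz constant $\frac{C_JT}{1-T(C_G+C_{G^2}^{1/2}+C_J|A|^{1/2})}\,C_{BA}$ tends to $0$ as $T\downarrow 0$, followed by restarting the argument on intervals of fixed length. You also fill in details the paper leaves implicit: that $S_{BA}(v)\in C([0,T];H^1(A))$, that every solution is a fixed point, and that uniqueness holds on successive intervals by induction; and you correctly use $C_{BA}$ where the paper writes $C_{AB}$.
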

\begin{proof}[Proof]
Define the operator
\[
S:C([0,T];L^2(B))\to C([0,T];L^2(B)),
\qquad
S:=S_{AB}\circ S_{BA},
\]
where $S_{BA}$ is defined in Lemma~\ref{EUlocaleli} and $S_{AB}$ in
Lemma~\ref{EUnlparab}.  

Let $v_1,v_2\in C([0,T];L^2(B))$ and set $\tilde v_i=S(v_i)$, $i=1,2$.
Combining the Lipschitz estimates for $S_{BA}$ and $S_{AB}$, we obtain
\[
\|\tilde v_1-\tilde v_2\|_B
\le
\underbrace{
\frac{C_JT}{1-T(C_G + C_{G^2}^{1/2}+C_J|A|^{1/2})}
\cdot
C_{AB}
}_{=:K(T)}
\|v_1-v_2\|_B.
\]
Choosing $T>0$ sufficiently small so that $K(T)<1$, the operator $S$ is a
strict contraction on $C([0,T];L^2(B))$. Therefore, by the Banach Fixed Point
Theorem, $S$ admits a unique fixed point, which yields the unique solution of the coupled system \eqref{elilocal}--\eqref{parnl}. To obtain the result for all $t>0$, we can extend the same result for $[T,2T]$, and by iteration, extend for all $T>0$.
\end{proof}

The proof of Theorem \ref{teo.asymp.2} follows the same lines as in subsection \ref{time.decay.1}.
In fact, arguing as before, taking the difference $v (x,t) - \overline{v_0}$ with $\overline{v_0} := \frac{1}{|B|} \int_B v_0(x)\,dx$, we only need to show that for
an initial datum $v_0\in L^2(B)$ with $\int_B v_0\,dx = 0$, 
there exists a constant $C>0$ such that
$$
\| v (\cdot,t)\|_{L^2(B)}
\;\le\;
e^{-2C t}\,\|v_0\|_{L^2(B)},
\qquad t\ge 0.
$$
The constant $C$ is given by
$$
\begin{array}{l}
\displaystyle
\lambda_1  = 
\min_{\| v \|_{L^2 (B)=1}, \int_B v (x) \, dx =0} 
 \frac12\int_A |\nabla u(x)|^2\,dx
  + \frac14 \int_B\!\int_B G(x-y)(v(x)-v(y))^2\,dx\,dy 
  \\[10pt]
  \qquad \qquad \qquad \qquad \displaystyle + \frac12\int_A\int_B J(x-y)\,(v(y)-u(x))^2\,dy\,dx.
  \end{array}
$$

In fact, we have that $\lambda_1$ is strictly positive.

\begin{lemma}\label{lemmaabneumann.2} Suppose that $B$ is connected. Then,
$$
\lambda_1 > 0.
$$
\end{lemma}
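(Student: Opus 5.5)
We argue by contradiction, in the same way as in Lemma \ref{lemmaabneumann}, now with the roles of $A$ and $B$ exchanged. Here $u = S_{BA}(v)\in H^1(A)$ is the function associated with $v$ through the elliptic problem \eqref{elilocal}, given by Lemma \ref{EUlocaleli}. Suppose $\lambda_1=0$. Then there is a sequence $v_n\in L^2(B)$ with $u_n=S_{BA}(v_n)$ such that
\[
\|v_n\|_{L^2(B)}=1,\qquad \int_B v_n\,dx=0,
\]
and each of the three nonnegative terms of the quotient tends to zero:
\[
\int_A|\nabla u_n|^2\,dx\to0,\qquad \iint_{B\times B}G(x-y)(v_n(x)-v_n(y))^2\,dx\,dy\to0,\qquad \iint_{A\times B}J(x-y)(v_n(y)-u_n(x))^2\,dy\,dx\to0.
\]
By Lemma \ref{EUlocaleli}, $\|u_n\|_{L^2(A)}\le C_{BA}$, so $(u_n)$ is bounded in $H^1(A)$. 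By Rellich--Kondrachov and the connectedness of $A$, up to a subsequence $u_n\to k_A$ strongly in $L^2(A)$ for some constant $k_A$. Also, up to a subsequence, $v_n\rightharpoonup v$ weakly in $L^2(B)$.

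The main obstacle is that weak convergence of $v_n$ is not enough: we need strong convergence in $L^2(B)$ to contradict $\|v_n\|=1$. Lower semicontinuity of the $G$-term, together with the connectedness of $B$ and the positivity of $G$ near the origin, gives $v\equiv k_B$ constant. Then $\int_B v_n=0$ gives $k_B=0$. To upgrade to strong convergence I would use the coupling term. Set $a(y)=\int_A J(x-y)\,dx$ for $y\in B$. Since $u_n\to k_A$ strongly and $J$ is bounded, expanding the square gives
\[
\int_B a(y)\,v_n(y)^2\,dy-2\int_B\Big(\int_A J(x-y)u_n(x)\,dx\Big)v_n(y)\,dy+\int_A\int_B J\,u_n^2\to0.
\]
In the middle term the inner integral converges strongly in $L^2(B)$ and $v_n\rightharpoonup0$, so that term tends to $0$. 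The last term tends to $k_A^2\int_B a$. Both remaining terms are nonnegative and their sum tends to zero, so $k_A=0$ and $\int_B a\,v_n^2\to0$. This gives strong convergence $v_n\to0$ in $L^2(B_1)$, where $B_1=\{y\in B: a(y)>0\}$ is nonempty by Hypothesis \ref{hypo}. If $a\ge a_0>0$ fails on all of $B_1$, I would shrink to a set where $a$ is bounded below; since $a$ is continuous and positive there, this works on compact subsets.

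Next I would propagate the strong convergence through $B$ using the $G$-term, exactly as in the iteration in the proof of Lemma \ref{lemma:coerc_v}. Expanding $\iint_{B_1\times B_2}G(x-y)(v_n(x)-v_n(y))^2\to0$, the cross term vanishes because of strong convergence on $B_1$ and weak convergence on $B_2$. This yields $\int_{B_2}\big(\int_{B_1}G(x-y)\,dy\big)v_n^2(x)\,dx\to0$, hence strong convergence on the enlarged region $B_2$. Since $B$ is bounded and connected and $G(0)>0$, finitely many steps cover $B$. So $v_n\to0$ strongly in $L^2(B)$, contradicting $\|v_n\|_{L^2(B)}=1$. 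Hence $\lambda_1>0$.

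Alternatively, the whole argument can be shortened by noting that the quantity minimized in $\lambda_1$ is bounded below by the coercive form of Lemma \ref{lemma:coerc_v}, up to the cross term handled above. I would nevertheless present the direct contradiction argument, since it mirrors Lemma \ref{lemmaabneumann}.
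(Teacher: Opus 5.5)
Your proof is correct. It is more complete than the paper's, whose proof of this lemma is the single sentence that it follows as in Lemma \ref{lemmaabneumann}.

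That template does not transfer literally. In Lemma \ref{lemmaabneumann} the normalized function is $u_n$, and strong convergence comes from $\int_A|\nabla u_n|^2\to0$ via Rellich--Kondrachov. Here the normalized function is $v_n\in L^2(B)$, which has no gradient and so no Rellich-type compactness. You correctly single this out as the main obstacle. You keep the same contradiction scheme (the limit is a constant, and the mean-zero constraint kills it), but you get compactness on the elliptic component $u_n=S_{BA}(v_n)$ instead. You then carry it to $B$ through the coupling term and the propagation argument of Lemma \ref{lemma:coerc_v}.

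You can shorten the second half by citing Lemma \ref{lemma:coerc_v} instead of re-running its iteration. This also removes the need for the weak limit of $v_n$. It also removes the bookkeeping about where $a$ and $\int_{B_1}G(\cdot-y)\,dy$ are bounded below, which your sketch only indicates, as does the paper's proof of that lemma. The shortened argument runs as follows:
\begin{itemize}
\item Once $u_n\to k_A$ in $L^2(A)$, Minkowski's inequality in $L^2(A\times B;J(x-y)\,dx\,dy)$, together with $\int_B J(x-y)\,dy\le 1$, gives $\int_B a(y)\,(v_n(y)-k_A)^2\,dy\to0$.
\item The $G$-term of $v_n-k_A$ coincides with that of $v_n$, so it tends to $0$.
\item Lemma \ref{lemma:coerc_v} applied to $v_n-k_A$ then gives $v_n\to k_A$ strongly in $L^2(B)$.
\item So $\int_B v_n=0$ forces $k_A=0$, contradicting $\|v_n\|_{L^2(B)}=1$.
\end{itemize}
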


\begin{proof}
It follows by similar arguments to those used in the proof of Lemma \ref{lemmaabneumann}.
\end{proof}

We now are ready to prove Theorem \ref{teo.asymp.2}.

\begin{proof}[Proof of Theorem \ref{teo.asymp.2}]
Multiplying \eqref{elilocal} by $v$, \eqref{parnl} by $u$, and integrating over $B$ and $A$ respectively, after some computations, we obtain
\begin{align}
\frac{d}{dt}\int_B \frac{v^2(x,t)}{2}\,dx
&= -\int_A |\nabla u(x,t)|^2\,dx
   -\frac12 \int_B\!\int_B G(x-y)(v(x,t)-v(y,t))^2\,dx\,dy \notag\\
&\quad
   -\int_A\!\int_B J(x-y)\bigl(u(x,t)-v(y,t)\bigr)^2\,dy\,dx.
\label{energy-final.66} \notag
\end{align}
Now, the right-hand side is bounded above by
\[
- \frac{\lambda_1}2 \int_B v^2(x,t)\,dx.
\]
Then, we have the differential
inequality
\[
\frac{d}{dt}\int_B \frac{v^2(x,t)}{2}\,dx
\le - \frac{\lambda_1}2 \int_B v^2(x,t)\,dx,
\]
and we conclude that
\[
\int_B v^2(x,t)\,dx
\le e^{- \lambda_1 t}\|v_0\|_{L^2(A)}^2,
\]
and the desired exponential decay is proved.
\end{proof}

Finally, we observe that this elliptic-parabolic system can be obtained as a limit
of a parabolic problem.

\begin{proof}[Proof of Theorem \ref{approximation}]
The proof follows the same ideas used in subsection \ref{subsect-limit-1} therefore we leave the details to the reader.  
\end{proof}

%%%%%%%%%%%%%%%%%%%%%%%%%%%%%%%%%%%%%%%%%%%%%%%%%%%%%%
	%          7. Acknowlegments
	%%%%%%%%%%%%%%%%%%%%%%%%%%%%%%%%%%%%%%%%%%%%%%%%%%%%%%
\section*{Acknowledgments}
	
	We want to thank M. Pereira for several nice discussions that helped us to improve our results.
	
	This work was started during a visit of L. C. Rosa da Silva to Univ. Torcuato di Tella, and continued with a visit of 
	J. D. Rossi to IME, Unv. Sao Paulo The authors are grateful to both
	institutions for the friendly and stimulating working atmosphere.

	 J. D. Rossi was partially supported by 
			CONICET PIP GI No 11220150100036CO
(Argentina), PICT-03183 (Argentina) and UBACyT 20020160100155BA (Argentina).


\begin{thebibliography}{99}

\bibitem[Acosta et al., 2022]{acosta}
Acosta, G.; Bersetche, F.; Rossi, J.~D.
Local and nonlocal energy-based coupling models.
\emph{SIAM J. Math. Anal.} 54 (2022), no.~6, 6288--6322.

\bibitem[Andreu-Vaillo et al., 2010]{julio}
Andreu-Vaillo, F.; Maz\'on, J. M.; Rossi, J. D.; Toledo-Melero, J. J.
\emph{Nonlocal diffusion problems}.
Mathematical Surveys and Monographs, vol.~165. AMS, 2010.

\bibitem[Azdoud et al., 2013]{Peri1}
Azdoud, Y.; Han, F.; Lubineau, G.
{A morphing framework to couple non-local and local anisotropic continua}.
{\it Inter. J. Solids Structures.} 50(9), (2013), 1332--1341.

\bibitem[Badia et al., 2007]{Peri2}
Badia, S.; Bochev, P.; Lehoucq, R.; Parks, M.; Fish, J.; Nuggehally, M.~A.; Gunzburger, M.
{A forcebased blending model for atomistic-to-continuum coupling}.
{\it Inter. J. Multiscale Comput. Engineering.} 5(5), (2007), 387--406.

\bibitem[Badia et al., 2008]{Peri3}
Badia, S.; Parks, M.; Bochev, P.; Gunzburger, M.; Lehoucq, R.
{On atomistic-to-continuum coupling by blending}.
{\it Multiscale Modeling Simulation.} 7(1), (2008), 381--406.

\bibitem[Badiale and Serra, 2011]{badiale}
Badiale, M.; Serra, E.
\emph{Semilinear Elliptic Equations for Beginners: Existence Results via the Variational Approach}.
Springer, 2011.

\bibitem[Bates and Chmaj, 1999]{BatesChmaj99}
Bates, P.; Chmaj, A.
An integrodifferential model for phase transitions: stationary solutions in higher dimensions.
\emph{J. Statist. Phys.} 95 (1999), no.~5--6, 1119--1139.

\bibitem[Bates et al., 1997]{BatesFifeRenWang97}
Bates, P.; Fife, P.; Ren, X.; Wang, X.
Traveling waves in a convolution model for phase transitions.
\emph{Arch. Ration. Mech. Anal.} 138 (1997), no.~2, 105--136.

\bibitem[Berestycki et al., 2015]{Bere}
Berestycki, H.; Coulon, A.-Ch.; Roquejoffre, J.-M.; Rossi, L.
{The effect of a line with nonlocal diffusion on Fisher-KPP propagation}.
{\it Math. Models Methods Appl. Sciences.} 25(13), (2015), 2519--2562.

\bibitem[Br\"andle et al., 2012]{BrandleChasseigneQuiros12}
Br\"andle, C.; Chasseigne, E.; Quir\'os, F.
Phase transitions with midrange interactions: a nonlocal Stefan model.
\emph{SIAM J. Math. Anal.} 44 (2012), no.~4, 3071--3100.

\bibitem[Ca\~nizo and Molino, 2018]{CanizoMolino18}
Ca\~nizo, J.~A.; Molino, A.
Improved energy methods for nonlocal diffusion problems.
\emph{Discrete Contin. Dyn. Syst.} 38 (2018), no.~3, 1405--1425.

\bibitem[Capanna et al., 2021]{monia}
Capanna, M.; Nakasato, J.; Pereira, M. C.; Rossi, J. D.
Homogenization for nonlocal problems with smooth kernels.
\emph{J. Dyn. Diff. Equat.} 41(6), (2021), 2777--2808.

\bibitem[Carrillo and Fife, 2005]{CarrilloFife05}
Carrillo, C.; Fife, P.
Spatial effects in discrete generation population models.
\emph{J. Math. Biol.} 50(2), (2005), 161--188.

\bibitem[Chasseigne et al., 2006]{ChasseigneChavesRossi06}
Chasseigne, E.; Chaves, M.; Rossi, J.~D.
Asymptotic behavior for nonlocal diffusion equations.
\emph{J. Math. Pures Appl.} 86(3), (2006), 271--291.

\bibitem[Chasseigne and Jakobsen, 2017]{chasseignejakobsen}
Chasseigne, E.; Jakobsen, E.~R.
On nonlocal quasilinear equations and their local limits.
\emph{Journal of Differential Equations} 262(5), (2017), 3759--3804.

\bibitem[Cort\'azar et al., 2007]{CortazarElguetaRossiWolanski07}
Cort\'azar, C.; Elgueta, M.; Rossi, J.~D.; Wolanski, N.
Boundary fluxes for nonlocal diffusion.
\emph{J. Differential Equations} 234(2), (2007), 360--390.

\bibitem[Cort\'azar et al., 2008]{CortazarElguetaRossiWolanski08}
Cort\'azar, C.; Elgueta, M.; Rossi, J.~D.; Wolanski, N.
How to approximate the heat equation with Neumann boundary conditions by nonlocal diffusion problems.
\emph{Arch. Ration. Mech. Anal.} 187(1), (2008), 137--156.

\bibitem[Cort\'azar et al., 2009]{cortazar}
Cort\'azar, C.; Elgueta, M.; Rossi, J.~D.
Nonlocal diffusion problems that approximate the heat equation with Dirichlet boundary conditions.
\emph{Israel J. Math.} 170, (2009), 53--60.

\bibitem[Coville and Dupaigne, 2007]{Coville}
Coville, J.; Dupaigne, L.
{On a non-local equation arising in population dynamics}.
{\it Proc. Royal Soc. Edinburgh A.} 137(4), (2007), 727--755.


\bibitem[Crandall and Liggett, 1971]{CrandallLiggett71}
Crandall, M.~G.; Liggett, T.~M.
Generation of semigroups of nonlinear transformations on general Banach spaces.
\emph{Amer. J. Math.} 93 (1971), 265--298.

\bibitem[D'Elia et al., 2017]{DEliaDuGunzburgerLehoucq17}
D'Elia, M.; Du, Q.; Gunzburger, M.; Lehoucq, R.
Nonlocal convection--diffusion problems on bounded domains and finite-range jump processes.
\emph{Comput. Methods Appl. Math.} 17(4), (2017), 707--722.

\bibitem[D'Elia et al., 2016a]{DEliaPeregoBochevLittlewood16}
D'Elia, M.; Perego, M.; Bochev, P.; Littlewood, D.
A coupling strategy for nonlocal and local diffusion models with mixed volume constraints and boundary conditions.
\emph{Comput. Math. Appl.} 71(11), (2016), 2218--2230.

\bibitem[D'Elia et al., 2016b]{delia3}
D'Elia, M.; Ridzal, D.; Peterson, K.~J.; Bochev, P.; Shashkov, M.
\emph{Optimization-based mesh correction with volume and convexity constraints}.
\emph{J. Comput. Phys.} 313 (2016), 455--477.

\bibitem[D'Elia et al., 2022]{SUR}
D'Elia, M.; Li, X.; Seleson, P.; Tian, X.; Yu, Y.
{A review of Local-to-Nonlocal coupling methods in nonlocal diffusion and nonlocal mechanics}.
{\it Peridyn. Nonlocal Model.} 4 (2022), 1--50.

\bibitem[Dal Maso, 1993]{DalMaso}
Dal Maso, G.
\emph{An Introduction to $\Gamma$-Convergence}.
Birkh\"auser, Boston, 1993.

\bibitem[Di Paola et al., 2009]{DiPaola}
Di Paola, M.; Failla, G.; Zingales, M.
{\it Physically-based approach to the mechanics of strong non-local linear elasticity theory}.
\emph{Journal of Elasticity} 97 (2009), 103--130.


\bibitem[dos Santos et al., 2021]{bruna}
dos Santos, B.~C.; Oliva, S.~M.; Rossi, J.~D.
A local/nonlocal diffusion model.
\emph{Applicable Analysis.} 101 (2021), 5213--5246.

\bibitem[Du et al., 2018]{DuLiLuTian18}
Du, Q.; Li, X.; Lu, J.; Tian, X.
A quasi-nonlocal coupling method for nonlocal and local diffusion models.
\emph{SIAM J. Numer. Anal.} 56 (2018), 1386--1404.

\bibitem[Evans, 2010]{evans}
Evans, L.~C.
\emph{Partial Differential Equations}.
AMS, Providence, RI, 2010.

\bibitem[Fife, 2003]{Fife03}
Fife, P.
Some nonclassical trends in parabolic and parabolic-like evolutions.
In: \emph{Trends in Nonlinear Analysis}, 153--191.
Springer, Berlin, 2003.

\bibitem[Gal and Warma, 2017]{GalWarma17}
Gal, C.~G.; Warma, M.
Nonlocal transmission problems with fractional diffusion and boundary conditions on non-smooth interfaces.
\emph{Comm. Partial Differential Equations} 42 (2017), 579--625.

\bibitem[G\'arriz et al., 2020]{quiros}
G\'arriz, A.; Quir\'os, F.; Rossi, J.~D.
Coupling local and nonlocal evolution equations.
\emph{Calc. Var. Partial Differential Equations} 59 (2020), Paper No.~112.

\bibitem[Han and Lubineau, 2012]{Han}
Han, F.; Lubineau, G.
{Coupling of nonlocal and local continuum models by the Arlequin approach}.
\emph{Int. J. Numer. Methods Eng.} 89 (2012), 671--685.

\bibitem[Hutson et al., 2003]{Hutson}
Hutson, V.; Martinez, S.; Mischaikow, K.; Vickers, G.~T.
{The evolution of dispersal}.
\emph{J. Math. Biol.} 47 (2003), 483--517.

\bibitem[Kriventsov, 2015]{Kriventsov15}
Kriventsov, D.
Regularity for a local--nonlocal transmission problem.
\emph{Arch. Ration. Mech. Anal.} 217 (2015), 1103--1195.

\bibitem[Lunardi, 1995]{Lunardi}
Lunardi, A.
\emph{Analytic Semigroups and Optimal Regularity in Parabolic Problems}.
Birkh\"auser, Basel, 1995.

\bibitem[Molino and Rossi, 2016]{molinorossi}
Molino, A.; Rossi, J.~D.
Nonlocal diffusion problems that approximate a parabolic equation with spatial dependence.
\emph{Z. Angew. Math. Phys.} 67 (2016), Article 41.

\bibitem[Mengesha and Du, 2014]{MenDu}
Mengesha, T.; Du, Q.
{The bond-based peridynamic system with Dirichlet-type volume constraint}.
\emph{Proc. Roy. Soc. Edinburgh A} (2014), 161--186.


\bibitem[Ouhabaz, 2005]{ouhabaz}
Ouhabaz, E.~M.
\emph{Analysis of Heat Semigroups}.
Princeton University Press, 2005.

\bibitem[Pazy, 1983]{pazy}
Pazy, A.
\emph{Semigroups of Linear Operators and Applications to Partial Differential Equations}.
Springer, 1983.

\bibitem[Pereira, 2025]{toninho}
Pereira, A.~L.
A Note on the Formulation of the Neumann Boundary Condition for a Nonlocal Problem.
\emph{Preprint arXiv:2509.12345}, 2025.

\bibitem[Seleson et al., 2013a]{Sel}
Seleson, P.; Bochev, S.; Parks, M.
{A force-based coupling scheme for peridynamics and classical elasticity}.
\emph{Comput. Mater. Sci.} 66 (2013), 34--49.

\bibitem[Seleson and Gunzburger, 2010]{Sel2}
Seleson, P.; Gunzburger, M.
{Bridging methods for atomistic-to-continuum coupling and their implementation}.
\emph{Commun. Comput. Phys.} 7 (2010), 831.

\bibitem[Seleson et al., 2013b]{Sel3}
Seleson, P.; Gunzburger, M.; Parks, M.~L.
{Interface problems in nonlocal diffusion and sharp transitions between local and nonlocal domains}.
\emph{Comput. Methods Appl. Mech. Engrg.} 266 (2013), 185--204.


\end{thebibliography}
\end{document}